\DeclareMathOperator{\diag}{diag}
\DeclareMathOperator{\Ad}{Ad}
\DeclareMathOperator{\Lin}{Lin}
\DeclareMathOperator{\trace}{trace}
\DeclareMathOperator{\const}{const}
\DeclareMathOperator{\inj}{inj}
\DeclareMathOperator{\can}{can}
\DeclareMathOperator{\Isom}{Isom}
\renewenvironment{proof}[1][Proof]{\textbf{#1.} }
{\ \rule{0.5em}{0.5em}}
\newtheorem{theorem}{Theorem}
\newtheorem{pred}{Proposition}
\newtheorem{lemma}{Lemma}
\newtheorem{corollary}{Corollary}
\newtheorem{definition}{Definition}
\newtheorem{remark}{Remark}
\newtheorem{example}{Example}
\newtheorem{question}{Question}
\begin{document}

\title[Clifford-Wolf homogeneous \dots]{Clifford-Wolf homogeneous Riemannian manifolds}

\author{V.N.~Berestovski\u\i , Yu.G.~Nikonorov}

\address{Berestovskii Valerii Nikolaevich \newline
Omsk Branch of Sobolev Institute of Mathematics SD RAS
\newline 644099, Omsk, ul. Pevtsova, 13, Russia}

\email{berestov@ofim.oscsbras.ru}

\address{Nikonorov Yurii  Gennadievich \newline
Rubtsovsk Industrial Institute
\newline of Altai State Technical University after
I.I.~Polzunov
\newline 658207, Rubtsovsk, Traktornaya, 2/6, Russia}

\email{nik@inst.rubtsovsk.ru}

\begin{abstract}
In this paper, using connections between Clifford-Wolf isometries
and Killing vector fields of constant length on a given Riemannian
manifold, we classify simply connected Clifford-Wolf homogeneous
Riemannian manifolds. We also get the classification of complete
simply connected Riemannian manifolds with the Killing property
defined and studied previously by J.E.~D'Atri and H.K.~Nickerson. In
the last part of the paper we study properties of Clifford-Killing
spaces, that is, real vector spaces of Killing vector fields of
constant length, on odd-dimensional round spheres, and discuss
numerous connections between these spaces and various classical
objects.

\vspace{2mm}
\noindent
2000 Mathematical Subject Classification: 53C20 (primary),
53C25, 53C35 (secondary).

\vspace{2mm} \noindent Key words and phrases: homogeneous space,
Killing field of constant length, isometry group of Riemannian
manifold, Clifford-Wolf translation, Clifford-Wolf homogeneous
Riemannian manifold, $\delta$-homogeneous Riemannian manifold,
Riemannian manifold with the Killing property, Radon-Hurwitz
function, Clifford-Killing space.
\end{abstract}

\maketitle

\tableofcontents

\section*{Introduction and the main result}

In this paper the authors give complete isometric classification of
simply connected manifolds in the title. Recall that a Clifford-Wolf
translation of a metric space is an isometry of the space onto
itself moving all its points one and the same distance \cite{F},
\cite{Wolf2}. A metric space is Clifford-Wolf homogeneous (or
CW-homogeneous), if for any two its points there exists a
Clifford-Wolf translation of the space moving one of these points to
another \cite{BerP}.

It is not difficult to see that Euclidean spaces, odd-dimensional round spheres,
and Lie groups with
bi-invariant Riemannian metrics, as well as direct metric products of
Clifford-Wolf homogeneous Riemannian spaces are Clifford-Wolf
homogeneous. The main result of this paper states that in simply
connected case the opposite statement is also true. More exactly,

\begin{theorem}\label{main}
A simply connected (connected) Riemannian manifold is Clifford-Wolf
homogeneous if and only if it is a direct metric
product of an Euclidean space, odd-dimensional spheres of constant
curvature and simply connected compact simple Lie groups supplied
with bi-invariant Riemannian metrics (some of these factors could be
absent).
\end{theorem}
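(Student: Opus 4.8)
The plan is to prove the nontrivial implication; that the listed spaces and their direct metric products are Clifford--Wolf homogeneous is the observation already recorded in the introduction. First I would note that, since Clifford--Wolf translations are isometries, a Clifford--Wolf homogeneous manifold is Riemannian homogeneous, hence complete, and so --- being simply connected --- it has a de~Rham decomposition $M=\mathbb{R}^{n_0}\times M_1\times\cdots\times M_k$ with each $M_i$ simply connected, complete, homogeneous, irreducible and non-flat. The next move is to push Clifford--Wolf homogeneity down to the factors: a Clifford--Wolf translation $\varphi$ of small enough displacement is $C^\infty$-close to the identity, hence lies in $\Isom_0(M)$, which preserves the de~Rham splitting and, being connected, cannot interchange the factors; so $\varphi=\varphi_0\times\varphi_1\times\cdots\times\varphi_k$. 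Since the product distance is Pythagorean, the displacement of $\varphi$ at $(x_0,\dots,x_k)$ equals $\bigl(\sum_i d_i(x_i,\varphi_i x_i)^2\bigr)^{1/2}$, and its constancy forces each term to be constant, i.e.\ each $\varphi_i$ to be a Clifford--Wolf translation of $M_i$; letting $q$ vary near $p$ within one factor at a time makes every $M_i$ \emph{restrictively} Clifford--Wolf homogeneous. The Euclidean factor is already of the required form, so the problem reduces to showing that a simply connected, complete, irreducible, non-flat, restrictively Clifford--Wolf homogeneous manifold $N$ with $\dim N\ge 2$ is an odd-dimensional round sphere or a simply connected compact simple Lie group with a bi-invariant metric. (This closes the circle: $M$ being then a metric product of spaces from the list is Clifford--Wolf homogeneous by the easy direction.)

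Second, I would pass to the infinitesimal picture. A limiting argument on small-displacement Clifford--Wolf translations --- take nontrivial $\varphi_j\to\mathrm{id}$ with $\varphi_j(p)$ tending to $p$ along a prescribed direction $v$, write $\varphi_j=\exp(\xi_j)$ in $\Isom_0(N)$, and extract a convergent subsequence of the normalized generators $\xi_j/d(p,\varphi_j p)$ in the finite-dimensional isometry algebra --- shows, using that the displacement of $\varphi_j$ is \emph{constant}, that for every $p\in N$ and every $v\in T_pN$ there is a Killing field $X$ of constant length with $X(p)=v$. For such a field one has $\nabla_XX=0$, and, crucially, every $2$-plane containing a value of $X$ has nonnegative sectional curvature; hence $N$ has $\sec\ge 0$ everywhere. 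A simply connected homogeneous Riemannian manifold of nonnegative curvature is a metric product of a Euclidean space and a compact homogeneous manifold, so irreducibility and non-flatness make $N$ \emph{compact}. Moreover $\chi(N)=0$, for otherwise every vector field --- in particular every Killing field --- would have a zero, contradicting the availability of constant-length Killing fields in every direction.

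Third comes the final classification: a simply connected compact irreducible homogeneous $N$ with $\sec\ge 0$, $\chi(N)=0$, on which every tangent vector is a value of a constant-length Killing field, must be an odd round sphere or a compact simple Lie group with a bi-invariant metric. I would analyse the isometry Lie algebra $\mathfrak g$ of $N=G/K$: constancy of the length of $X_\xi$ is an $\Ad(K)$-invariance requirement along the isotropy directions, and demanding that such fields exhaust $T_pN$ severely limits the isotropy and the structure of the pertinent $\Ad(K)$-invariant subspaces of $\mathfrak g$. Concretely one can invoke the known classification of simply connected compact homogeneous spaces of nonnegative curvature, restrict to $\chi=0$, and eliminate the remaining non-symmetric candidates by showing that they do not carry constant-length Killing fields in every direction; in the symmetric case what is left is the classification of irreducible symmetric spaces admitting constant-length Killing fields, namely odd spheres and group manifolds.

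The hard part, I expect, will be this last step --- in particular, discarding the non-symmetric homogeneous spaces of nonnegative curvature with vanishing Euler characteristic, where the soft arguments of the previous steps no longer suffice and one really needs the structure theory of Clifford--Killing spaces (which is also why the paper develops it). By contrast the de~Rham descent is routine, and once the infinitesimal reformulation is available the curvature sign, the reduction to the compact case, and the Euler-characteristic exclusion are all short; the genuinely technical ingredient in the "soft" part is the limiting argument producing the constant-length Killing fields.
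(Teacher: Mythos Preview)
Your reduction to the irreducible factors and the passage to constant-length Killing fields are fine and match the paper. The genuine gap is in your third step. There is no ``known classification of simply connected compact homogeneous spaces of nonnegative curvature'' to invoke: this class contains, for instance, every normal homogeneous space of a compact Lie group, and no finite list exists from which one could eliminate the non-symmetric members case by case. The paper does not proceed this way at all, and the Clifford--Killing material you point to (Sections~6--11) is developed \emph{after} the main theorem, for independent purposes; it plays no role in the proof of Theorem~\ref{main}.

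What you are missing is the paper's key technical lemma (Theorem~5): for any Killing field $Z$ of constant length one has $(\nabla_Z R)(\cdot,Z)Z=0$ identically. Once you know that through every tangent vector there passes such a $Z$, this identity says $(\nabla_v R)(\cdot,v)v=0$ for all $v$, and a classical criterion (Proposition~2.35 in Besse, quoted here as Proposition~4) then gives that the manifold is locally symmetric, hence symmetric in the simply connected complete case. This replaces your proposed elimination of non-symmetric candidates by a single direct argument. A second, smaller omission: among irreducible symmetric spaces, the list admitting nontrivial constant-length Killing fields is odd spheres, compact simple groups, \emph{and} $SU(2m)/Sp(m)$; the last family has to be excluded separately by a dimension count (Proposition~6), since having \emph{some} constant-length Killing field is weaker than having one through every tangent vector.
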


In the first section we give in more details necessary definitions
of spaces under investigation, discuss examples of Clifford-Wolf
translations and groups consisting of them.

Clifford-Wolf homogeneous space is a special case of so-called
$\delta$-homogeneous metric space (see definition in Section 1 or in
\cite{BerP}). On the ground of the author's structure results on
$\delta$-homogeneous Riemannian manifolds in \cite{BerNikDGA}, we
present in the second section analogous results for CW-homogeneous
manifolds.

In the third sections, the authors elaborate main technical means
and tools for the further investigation, namely Killing vector
fields of constant length, their properties, and especially the very
top result of Theorem 5 about the nullity of covariant derivative of
the curvature tensor on Riemannian manifold when one uses three
times one and the same Killing vector field of constant length.

The subject of the fourth section is well presented by its title. In
particular, we find a connection of the notion of more general
restrictively CW-homogeneous manifolds with the possibility of
presenting geodesics as integral curves of Killing vector fields of
constant length. With the help of author's investigations of Killing
vector fields on symmetric spaces in \cite{BerNikTrans}, it is
proved that for symmetric spaces this more general notion is
equivalent to the old one.

In the next section the main result is proved. First of all, on the
ground of mentioned Theorem 5 and one result in \cite{Bes}, we
immediately get that any simply connected CW-homogeneous Riemannian
manifold is symmetric. After this, thoroughly, even not so long,
study of the CW-homogeneity condition in symmetric case permits to
finish proof.

In Section 6 the Clifford-Killing spaces, that is, real vector
spaces of Killing vector fields of constant length, are introduced.

Manifolds in the title of the seventh section have been defined and
investigated by D'Atri and Nickerson in the paper \cite{DatN}. They
are exactly Riemannian manifolds which locally admit
Clifford-Killing spaces of dimension equal to the dimension of the
manifold. We proved in Theorem 11 that in simply connected complete
case, these manifolds are classified similar to CW-homogeneous
spaces in Theorem \ref{main}, but among odd-dimensional spheres one
should leave only seven-dimensional one.

It is quite interesting that classical famous results of Radon and
Hurwitz in \cite{Rad} and \cite{H}, discussed in Section 8, can be
interpreted exactly as a construction of Clifford-Killing spaces of
maximal dimension on round odd-dimensional spheres, and famous
Radon-Hurwitz function gives their dimensions.

In the ninth section, a close connection of Clifford-Killing spaces
on round odd-dimensional spheres with Clifford algebras and modules
is established. This permits to classify all Clifford-Killing vector
spaces on round spheres in Theorems 16 (and 17) up to
self-isometries of spheres (preserving the orientation).

Radon in his paper \cite{Rad} observed that his considerations of
the Hurwitz Question 2 in Section 8 are closely connected with some
(topological) spheres in the Lie group $O(2n)$, $n\geq 2$. We proved
that any this Radon's sphere is a totaly geodesic sphere in $O(2n)$
supplied with a bi-invariant Riemannian metric $\mu$.

In the next section we deal with more general question of totally
geodesic spheres in $(SO(2n), \mu)$ related to triple Lie systems in
the Lie algebra of $SO(2n)$ and Clifford-Killing spaces on round
spheres $S^{2n-1}$. In particular, it is proved, besides results,
similar to results in previous section, that Radon's spheres
coincide with totally geodesic Helgason's spheres (of constant
sectional curvature $k$ equal to maximal sectional curvature of
$(SO(2n),\mu)$) from the paper \cite{HelP} if and only if $n=2$. In
Proposition 12, the curvature $k$ is calculated.

In the last section, Lie algebras in Clifford-Killing
spaces on round odd-dimensional spheres are studied.

\medskip
{\bf Acknowledgements.}
We thank P.~Eberlein and J.A.~Wolf for helpful discussion of this project.
The project is supported by
the State Maintenance Program for the Leading Scientific Schools
of the Russian Federation (grants NSH-5682.2008.1).
The first author is supported in part by RFBR (grant 08-01-00067-a).

\section{Preliminaries}

In the course of the paper, if the opposite is not stated, a
Riemannian manifold means a connected $C^{\infty}$-smooth Riemannian
manifold; the smoothness of any object means
$C^{\infty}$-smoothness. For a Riemannian manifold $M$ and its point
$x\in M$ by $M_x$ we denote the tangent (Euclidean) space to $M$ at
the point $x$. For a given Riemannian manifold $(M,g)$ by $\rho$ we
denote the inner (length) metric generated by the Riemannian metric
tensor $g$ on $M$.

Recall that a {\it Clifford-Wolf translation} in $(M,g)$ is an
isometry $s$ moving all points in $M$ one and the same distance,
i.~e. $\rho(x,s(x)) \equiv \const$ for all $x\in M$. Notice that
Clifford-Wolf translations are often called \textit{Clifford
translations} (see for example \cite{W} or \cite{KN}), but we follow
in this case the terminology of the paper \cite{F}. Clifford-Wolf
translations naturally appear in the investigation of homogeneous
Riemannian coverings of homogeneous Riemannian manifolds
\cite{W,KN}. H.~Freudenthal classified in the paper \cite{F} all
individual Clifford-Wolf translations on symmetric spaces. Let us
indicate yet another construction of such transformations.
Suppose
that some isometry group $G$ acts transitively on a Rimannian
manifold $M$ and $s$ is any element of the centralizer of $G$ in the full isometry group $\Isom(M)$ of $M$.
Then $s$ is a Clifford-Wolf translation (in particular, if the center $Z$ of the group $G$ is not discrete,
then every one-parameter subgroup in $Z$ consists of Clifford-Wolf
translations on $M$). Indeed, if $x$ and $y$ are some points of
manifold $M$, then there is $g\in G$ such that $g(x)=y$. Thus
$$
\rho(x,s(x))=\rho(g(x),g(s(x)))=\rho(g(x),s(g(x)))=\rho(y,s(y)).
$$
For symmetric spaces, this result can be inverted
due to V.~Ozols (see \cite{Ozols4}, Corollary 2.7):
If $M$ is symmetric, then for any Clifford-Wolf
translation $s$ on $M$, the centralizer of $s$ in $\Isom(M)$
acts transitively on $M$.

 Notice that several classical Riemannian
manifolds possesses a one-parameter group of Clifford-Wolf
translations.
For instance, one knows that among irreducible compact simply
connected symmetric spaces only odd-dimensional spheres, the spaces
$SU(2m)/Sp(m)$, $m\geq 2$, and simple compact Lie groups, supplied
with some bi-invariant Riemannian metric, admit one-parameter groups
of Clifford-Wolf translations \cite{Wolf62}.

\begin{definition}[\cite{BerP}]\label{cw}
An inner metric space $(M,\rho)$ is called {\it Clifford-Wolf
homogeneous} if for every  two points $y,z$ in $M$ there exists a
Clifford-Wolf translation of the space $(M,\rho)$ moving $y$ to $z$.
\end{definition}

Let us consider some examples. Obviously, every Euclidean space
$\mathbb{E}^n$ is Clifford-Wolf homogeneous. Since $\mathbb{E}^n$ can be treated as a
(commutative) additive vector group with a bi-invariant inner
product, the following example can be considered as a
generalization.

\begin{example}\label{ecwh1}
Let $G$ be a Lie group supplied with a bi-invariant Riemannian
metric $\rho$. In this case both the group of left shifts $L(G)$ and
the group of right shifts $R(G)$ consist of Clifford-Wolf isometries
of $(G,\rho)$. Therefore, $(G,\rho)$ is Clifford-Wolf homogeneous.
Note also that in \cite{BerD} the following result has been proved:
A Riemannian manifold $(M,g)$ admits a transitive group $\Gamma$ of
Clifford-Wolf translations if and only if it is isometric to some
Lie group $G$ supplied with a bi-invariant Riemannian metric.
\end{example}

\begin{example}\label{ecwh2}
Every odd-dimensional round sphere $S^{2n-1}$ is Clifford-Wolf
homogeneous. Indeed, $S^{2n-1}=\{\xi=(z_1,\dots, z_n)\in
\mathbb{C}^n: \sum_{k=1}^{n}|z_k|^2=1\}$. Then the formula
$\gamma(t)(\xi)=e^{it}\xi$ defines a one-parameter group of
Clifford-Wolf translations on $S^{2n-1}$ with all orbits as geodesic
circles. Now, since $S^{2n-1}$ is homogeneous and isotropic, any its
geodesic circle is an orbit of a one-parameter group of
Clifford-Wolf translations, and so $S^{2n-1}$ is Clifford-Wolf
homogeneous. Note that $S^1$ and $S^3$ can be treated as the Lie groups
$SO(2)$ and $SU(2)$ with bi-invariant Riemannian metrics.
\end{example}

Note also that any direct metric product of Clifford-Wold
homogeneous Riemannian manifolds is Clifford-Wold homogeneous
itself. On the other hand, the condition for a Riemannian manifold
to be Clifford-Wolf homogeneous, is quite strong. Therefore, one
should hope to get a complete classification of such manifolds.

\begin{definition}\label{rcw}
An inner metric space $(M,\rho)$ is called {\it restrictively
Clifford-Wolf homogeneous} if for any $x\in M$ there exists a number
$r(x)>0$ such that for any two points $y,z$ in open ball $U(x,r(x))$
there exists a Clifford-Wolf translation of the space $(M,\rho)$
moving $y$ to $z$.
\end{definition}

The notion of (restrictively) Clifford-Wolf homogeneous metric space
is related to the notion of $\delta$-homogeneous metric space.
Recall the following definition.

\begin{definition}[\cite{BerP}]\label{o}
Let $(X,d)$ be a metric space and $x\in X$. An isometry $f:X
\rightarrow X$ is called {\it a $\delta(x)$-translation}, if $x$ is
a point of maximal displacement of $f$, i.~e. for every $y \in X$ the
relation $d(y,f(y))\leq d(x,f(x))$ holds. A metric space $(X,d)$ is
called {\it $\delta$-homogeneous}, if for every $x,y \in X$ there
exists a $\delta(x)$-translation of $(X,d)$, moving $x$ to $y$.
\end{definition}

It is easy to see that every restrictively Clifford-Wolf homogeneous
locally compact complete inner metric space is $\delta$-homogeneous
(see Proposition 1 in \cite{BerNikDGA}). Therefore, we can apply all
results on $\delta$-homogeneous Riemannian manifolds to
(restrictively) Clifford-Wolf homogeneous Riemannian manifolds
\cite{BerNikDGA}.

We shall need also the following

\begin{definition}\label{SKWH}
An inner metric space $(X,d)$ is called strongly Clifford-Wolf
homogeneous if for every two points $x,y\in X$ there is a
one-parameter group $\gamma(t)$, $t\in \mathbb{R}$, of Clifford-Wolf
translations of the space $(X,d)$ such that for sufficiently small
$|t|$, $\gamma(t)$ shifts all points of $(X,d)$ to distance $|t|$,
and $\gamma(s)(x)=y$, where $d(x,y)=s$.
\end{definition}

It is clear that any strongly Clifford-Wolf homogeneous inner metric
space is a Clifford-Wolf homogeneous and Clifford-Wolf homogeneous
inner metric space is a restrictively Clifford-Wolf homogeneous. It
is a natural question: are the above three classes pairwise distinct
(in particular, in the case of Riemannian manifolds)?

\section{Some structure results}

\begin{lemma}\label{first}
Every restrictively Clifford-Wolf homogeneous Riemannian manifold is homogeneous and, consequently,
complete.
\end{lemma}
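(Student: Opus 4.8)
The plan is to show first that a restrictively Clifford-Wolf homogeneous Riemannian manifold $(M,g)$ is homogeneous, and then invoke the standard fact that a homogeneous Riemannian manifold is complete (its isometry group is a Lie group acting transitively, so $M$ is the image of a complete left-invariant metric, or more simply: every geodesic can be extended because the local structure is the same at every point). The completeness step is routine; the content is in the homogeneity.

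For homogeneity I would argue as follows. Fix $x \in M$ and let $r(x)>0$ be the radius from Definition \ref{rcw}, so that any two points of the open ball $U(x,r(x))$ are interchanged — more precisely, connected in the required direction — by a Clifford-Wolf translation, hence in particular by an isometry of $(M,\rho)$. Thus $U(x,r(x))$ lies in a single orbit of the full isometry group $G = \Isom(M,\rho)$. This shows that every $G$-orbit is open in $M$. Since $M$ is connected and the orbits of $G$ partition $M$ into open (hence also closed, being complements of unions of the other open orbits) sets, there is exactly one orbit, i.e. $G$ acts transitively and $(M,g)$ is homogeneous.

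The one point needing a little care — and the place I would expect the only real friction — is that Definition \ref{rcw} only asserts the existence of a Clifford-Wolf translation moving $y$ to $z$ for $y,z \in U(x,r(x))$; one must note that such a translation is in particular an element of $\Isom(M,\rho)$, and (if one wants a symmetric relation) that its inverse moves $z$ to $y$, so "lies in the same $G$-orbit as" is genuinely an equivalence relation on $U(x,r(x))$ and the whole ball is one orbit. No smoothness or curvature input is needed here: the argument is purely topological, using only connectedness of $M$ and the openness of orbits. Finally, for completeness I would simply cite that a homogeneous Riemannian manifold is complete: transitivity of the isometry group gives a uniform lower bound on the injectivity radius (equivalently, geodesics extend to all of $\mathbb{R}$), so $(M,g)$ is geodesically complete, hence complete by Hopf–Rinow.
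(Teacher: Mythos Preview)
Your argument is correct and matches the paper's own proof essentially line for line: orbits of the full isometry group are open by Definition~\ref{rcw}, hence (by connectedness of $M$) there is a single orbit, so $M$ is homogeneous and therefore complete. The only difference is that you spell out a bit more detail than the paper does.
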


\begin{proof}
Let $(M,g)$ be a restrictively Clifford-Wolf homogeneous Riemannian manifold
and let $\Isom$ be its full isometry group. By
Definition \ref{rcw}, the orbit $O$ of a point $x\in M$ under the action of $\Isom$
is open subset of $M$. Since $O$ is clearly closed in $M$ and $M$ assumed to be connected,
then $O=M$.
\end{proof}

\begin{theorem}\label{product}
Every (restrictively) Clifford-Wolf homogeneous Riemannian manifolds
is $\delta$-homogeneous, has non-negative sectional curvature, and
is a direct metric product of Euclidean space and a compact
(restrictively) Clifford-Wolf homogeneous Riemannian manifold.
\end{theorem}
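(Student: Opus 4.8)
The plan is to derive the statement from the structure theory of $\delta$-homogeneous Riemannian manifolds established in \cite{BerNikDGA}, adding a splitting argument for isometries that carries Clifford-Wolf homogeneity over to the compact factor.

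First I would observe that, by Lemma \ref{first}, a restrictively Clifford-Wolf homogeneous Riemannian manifold $(M,g)$ is homogeneous and complete, hence a locally compact complete inner metric space; therefore, by Proposition~1 of \cite{BerNikDGA} (recalled in Section~1), $(M,g)$ is $\delta$-homogeneous. Now one invokes the structure results of \cite{BerNikDGA}: every $\delta$-homogeneous Riemannian manifold has non-negative sectional curvature and is a direct metric product $M=\mathbb{E}^k\times M_0$ in which $M_0$ is a compact $\delta$-homogeneous Riemannian manifold. This settles the first two assertions and reduces the theorem to proving that $M_0$ is again (restrictively) Clifford-Wolf homogeneous.

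The heart of the matter is that the Euclidean factor of $M$ is canonical. Since $M_0$ is compact, a complete geodesic of $M=\mathbb{E}^k\times M_0$ is a line (that is, globally minimizing on all of $\mathbb{R}$) if and only if its $M_0$-component is constant — otherwise its two endpoints stay at bounded $M_0$-distance while the arclength between them grows without bound. Hence, at each point $(p,q)$, the set of vectors whose geodesic is a line is exactly the tangent space to the leaf $\mathbb{E}^k\times\{q\}$; this linear subspace and its $g$-orthogonal complement, the tangent space to $\{p\}\times M_0$, are therefore preserved by every isometry of $M$. Both distributions are integrable with complete, totally geodesic leaves, so any isometry $s$ of $M$ sends leaves to leaves of each of the two foliations, and intersecting the image of an $\mathbb{E}^k$-leaf with the image of an $M_0$-leaf yields $s=\sigma\times\tau$ with $\sigma\in\Isom(\mathbb{E}^k)$ and $\tau\in\Isom(M_0)$. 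Then from $\rho_M^2=\rho_{\mathbb{E}^k}^2+\rho_{M_0}^2$ one gets $\rho_M((p,q),s(p,q))^2=\rho_{\mathbb{E}^k}(p,\sigma p)^2+\rho_{M_0}(q,\tau q)^2$, and fixing one coordinate while letting the other vary shows that $s$ is a Clifford-Wolf translation of $M$ exactly when $\sigma$ and $\tau$ are Clifford-Wolf translations of $\mathbb{E}^k$ and of $M_0$, respectively.

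Finally I would transfer homogeneity: fix $o\in\mathbb{E}^k$; given $y,z\in M_0$, apply the hypothesis to the points $(o,y),(o,z)\in M$ to obtain a Clifford-Wolf translation $s$ of $M$ with $s(o,y)=(o,z)$, and write $s=\sigma\times\tau$ as above, so that $\tau$ is a Clifford-Wolf translation of $M_0$ with $\tau(y)=z$. In the restrictively Clifford-Wolf homogeneous case it suffices to take $y,z$ in a small metric ball of $M_0$ about some $y_0$; since such a ball lies inside the metric ball of $M$ about $(o,y_0)$ of the same radius, the number associated by Definition \ref{rcw} to the point $(o,y_0)$ does the job, and the rest is identical. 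Hence $M_0$ is (restrictively) Clifford-Wolf homogeneous. I expect the delicate step to be the splitting $s=\sigma\times\tau$: the line characterization above is what makes it work, and there compactness of $M_0$ is used essentially — it is precisely what excludes ``mixed'' lines and keeps the Euclidean factor canonical even when $M_0$ itself contains flat tori.
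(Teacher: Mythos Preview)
Your proof is correct and follows essentially the same route as the paper: completeness plus Proposition~1 of \cite{BerNikDGA} gives $\delta$-homogeneity, then the structure theory of $\delta$-homogeneous spaces (nonnegative curvature and the Toponogov splitting $M=\mathbb{E}^k\times M_0$ with $M_0$ compact) does the rest, once one knows that Clifford--Wolf translations respect the product. The only genuine difference is in that last step: the paper simply cites Theorem~3.1.2 of \cite{Wolf62}, which says that a Clifford--Wolf translation of any Riemannian product is a product of Clifford--Wolf translations of the factors; you instead give a self-contained argument specific to the situation $\mathbb{E}^k\times(\text{compact})$, characterizing lines as geodesics with constant $M_0$-component and deducing that \emph{every} isometry, not just the Clifford--Wolf ones, splits. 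Your argument is more elementary and yields a slightly stronger intermediate statement, at the price of using compactness of $M_0$; Wolf's result is more general but is a black box here. Either way the conclusion follows.
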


\begin{proof}
The first assertion follows from Proposition 1 in \cite{BerNikDGA} and Lemma \ref{first}.
The second one follows from the fact that every $\delta$-homogeneous
Riemannian manifold has nonnegative sectional curvature (see
\cite{BerNikDGA} and \cite{BerP} for some more general results). The
third assertion easily follows from Toponogov's theorem in \cite{T},
stating that every complete Riemannian manifold $(M,\mu)$ with
nonnegative sectional curvature, containing a metric line, is
isometric to a direct Riemannian product $(N,\nu)\times \mathbb{R}$
(more general results in this direction could be found in
\cite{BerP}) and the fact that any Clifford-Wolf translation
preserves a product structure on a metric product of two Riemanian
manifolds, and so is a direct product of suitable Clifford-Wolf
translations on these manifolds (see Theorem 3.1.2 in
\cite{Wolf62}).
\end{proof}

\begin{theorem}[Corollary 3 in \cite{BerNikDGA}]\label{cover}
Any Riemanniang covering of a (restrictively) Clifford-Wolf
homogeneous Riemannian manifold, is (restrictively) Clifford-Wolf
homogeneous itself.
\end{theorem}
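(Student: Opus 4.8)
The plan is to obtain Clifford-Wolf translations of the covering space by lifting Clifford-Wolf translations of the base. Let $\pi\colon\widetilde M\to M$ be a Riemannian covering with $(M,g)$ (restrictively) Clifford-Wolf homogeneous. First record three facts. The manifold $\widetilde M$ is complete, since it covers $M$, which is complete by Lemma~\ref{first}. The map $\pi$ is a local isometry, so it does not increase distances: $\rho_M(\pi(a),\pi(b))\le\rho_{\widetilde M}(a,b)$ for all $a,b\in\widetilde M$. And $\inj_{\widetilde M}(\widetilde x)\ge\inj_M(\pi(\widetilde x))$, because a geodesic loop, resp.\ a conjugate point, in $\widetilde M$ projects to one of the same length, resp.\ at the same distance, in $M$. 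By Theorem~\ref{product} we may write $M=\mathbb E^n\times M_0$ with $M_0$ compact, so that $\iota:=\inf_{x\in M}\inj_M(x)>0$.

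I would treat the restrictively Clifford-Wolf homogeneous case first. Fix $\widetilde x\in\widetilde M$, put $x=\pi(\widetilde x)$, and choose $r>0$ with $2r<\iota$ and $2r\le r(x)$, where $r(x)$ is the radius provided by Definition~\ref{rcw}. Given $\widetilde y,\widetilde z\in U(\widetilde x,r)$ with images $y,z$, one has $\rho_M(y,z)\le\rho_{\widetilde M}(\widetilde y,\widetilde z)<2r\le r(x)$, so restrictive Clifford-Wolf homogeneity of $M$ supplies a Clifford-Wolf translation $s$ of $M$ with $s(y)=z$, whose displacement $c=\rho_M(y,z)$ is less than $\iota$. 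For every $p\in M$ the point $s(p)$ then lies inside the injectivity radius of $p$, so there is a unique minimal geodesic from $p$ to $s(p)$; define $\widetilde s\colon\widetilde M\to\widetilde M$ by letting $\widetilde s(\widetilde p)$ be the endpoint of the lift, starting at $\widetilde p$, of the minimal geodesic from $\pi(\widetilde p)$ to $s(\pi(\widetilde p))$. This $\widetilde s$ is a smooth lift of $s\circ\pi$ along $\pi$, hence a local isometry since $\pi$ is; the same construction applied to $s^{-1}$ (also of displacement $c$) produces a two-sided inverse, so $\widetilde s$, being a bijective local isometry of the complete manifold $\widetilde M$, is an isometry. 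Since the lifted geodesic has length $c$ and $\pi$ does not increase distances, $\rho_{\widetilde M}(\widetilde p,\widetilde s(\widetilde p))=c$ for every $\widetilde p$, so $\widetilde s$ is a Clifford-Wolf translation. Finally $\rho_{\widetilde M}(\widetilde y,\widetilde z)<2r<\iota\le\inj_{\widetilde M}(\widetilde y)$, so the minimal geodesic from $\widetilde y$ to $\widetilde z$ projects onto the minimal geodesic from $y$ to $z=s(y)$; hence $\widetilde s(\widetilde y)=\widetilde z$, and $\widetilde M$ is restrictively Clifford-Wolf homogeneous.

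For the global Clifford-Wolf homogeneous case one must move arbitrary, possibly distant, points, and the construction above breaks down: a Clifford-Wolf translation of $M$ with large displacement need not lift to $\widetilde M$ at all (only to the universal cover), and among its lifts the displacement function need no longer be constant, while composing short Clifford-Wolf translations is useless since a product of Clifford-Wolf translations is in general not a Clifford-Wolf translation. The intended route is to pass to the universal Riemannian cover $\widehat M$ (to which every isometry lifts); to reduce, by Theorem~\ref{product} together with the splitting of a metric product and of its Clifford-Wolf translations (Theorem~3.1.2 in \cite{Wolf62}) and the fact that a metric product of Clifford-Wolf homogeneous manifolds is Clifford-Wolf homogeneous, to the compact factor $M_0$; and there to invoke the structure theory of compact $\delta$-homogeneous manifolds from \cite{BerNikDGA}, describing a Clifford-Wolf translation of $M_0$ either through a one-parameter group of Clifford-Wolf translations, whose generating Killing field has constant length and so pulls back to a constant-length Killing field of $\widehat{M_0}$ generating a one-parameter group of Clifford-Wolf translations there, or through the centralizer-of-a-transitive-group construction recalled in Section~1, which is controlled by a connected transitive isometry group compatibly with passage to the universal cover; finally one descends from $\widehat M$ to the intermediate cover $\widetilde M$. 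The step I expect to be the principal obstacle is precisely this control of the displacement function under lifting: choosing, among the many lifts of a Clifford-Wolf translation, one that is again a Clifford-Wolf translation and that moves the prescribed point to the prescribed preimage.
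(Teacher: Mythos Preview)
The paper gives no proof of this statement; it is simply quoted as Corollary~3 of \cite{BerNikDGA} without argument, so there is no in-text proof to compare against. Your restrictive case is correct and self-contained: the displacement $c=\rho_M(y,z)$ lies below the uniform injectivity bound $\iota$, so the lift $\widetilde s$ via unique minimal geodesics is well defined; the inverse construction applied to $s^{-1}$ gives bijectivity; the two-sided distance estimate forces displacement exactly $c$; and since the minimal geodesic from $\widetilde y$ to $\widetilde z$ has length below $\iota\le\inj_M(y)$, it projects to the minimal geodesic $y\to s(y)$, whence $\widetilde s(\widetilde y)=\widetilde z$.

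The global case, by contrast, is not proved: your second paragraph is a programme with an explicitly acknowledged obstruction, and the listed ingredients do not overcome it. A Clifford--Wolf translation of $M$ with large displacement does lift to the universal cover $\widehat M$, but the lifts differ by deck transformations, which in general destroy constant displacement; and even if $\widehat M$ is CW-homogeneous, you give no mechanism for descending this to an \emph{intermediate} quotient $\widetilde M$ (a quotient of a CW-homogeneous manifold need not be CW-homogeneous, so this direction is no easier). The appeal to ``structure theory of compact $\delta$-homogeneous manifolds from \cite{BerNikDGA}'' and to the centralizer construction is too vague to carry weight. As written, the global half remains a sketch; to close it you would have to import the actual argument from \cite{BerNikDGA}, which is precisely what the paper does by citation.
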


\begin{theorem}\label{product2}
Let $M$ be a simply connected (restrictively) Clifford-Wolf
homogeneous Rieman\-ni\-an manifold and $M=M_{0}\times M_{1}\times
\dots \times M_{k}$ its de~Rham decomposition, where $M_0$ is
Euclidaen space, and the others $M_i$, $1\leq i \leq k$, are simply
connected compact Riemannian manifolds. Then every $M_i$ is a
(restrictively) Clifford-Wolf homogeneous Riemannian manifolds.
Moreover, any isometry $f$ of $M$ is a Clifford-Wolf translation if
and only if it is a product of some Clifford-Wolf translations $f_i$
on $M_i$.
\end{theorem}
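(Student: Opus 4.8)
The plan is to exploit the rigidity of the de~Rham decomposition for a simply connected Riemannian manifold, namely that $\Isom(M)$ splits as the product of the isometry groups $\Isom(M_i)$ permuting isometric factors, together with the fact (cited from Theorem~3.1.2 in \cite{Wolf62}) that any isometry of a metric product preserving the product structure decomposes as a product of isometries of the factors and is a Clifford-Wolf translation if and only if each factor is. The first step is to observe that since $M$ is simply connected, its de~Rham decomposition $M=M_0\times M_1\times\dots\times M_k$ is unique up to order and isometry of factors, and every isometry of $M$ permutes the factors within each isometry class; in particular, the subgroup $\Isom_0(M)$ of the identity component preserves the decomposition (each $M_i$ is mapped to itself), so it is contained in $\Isom(M_0)\times\dots\times\Isom(M_k)$.

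Next I would establish that each $M_i$ is (restrictively) Clifford-Wolf homogeneous. Fix $i$ and two points $y_i,z_i\in M_i$; pick arbitrary basepoints in the remaining factors to form $y=(y_0,\dots,y_i,\dots,y_k)$ and $z=(y_0,\dots,z_i,\dots,y_k)$, differing only in the $i$th coordinate. By (restrictive) Clifford-Wolf homogeneity of $M$ there is a Clifford-Wolf translation $f$ of $M$ (in the restrictive case, one first shrinks to a product of balls $U(y_j,r(y_j))$ inside a ball where the hypothesis applies) with $f(y)=z$. Since $f$ moves all points the same distance $d=\rho(y,z)=\rho_i(y_i,z_i)$, it lies in the identity component (a Clifford-Wolf translation moving points a small distance is close to the identity, hence in $\Isom_0(M)$; for the restrictive statement we only get this for small $d$, which suffices), so $f=f_0\times f_1\times\dots\times f_k$ with $f_j\in\Isom(M_j)$. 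The product formula for displacement, $\rho(x,f(x))^2=\sum_j\rho_j(x_j,f_j(x_j))^2$, forces each $f_j$ to be a Clifford-Wolf translation of $M_j$; and from $f(y)=z$ we read off $f_j(y_j)=y_j$ for $j\neq i$ and $f_i(y_i)=z_i$. Thus $f_i$ is a Clifford-Wolf translation of $M_i$ carrying $y_i$ to $z_i$, proving $M_i$ is (restrictively) Clifford-Wolf homogeneous.

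The final step is the characterization of Clifford-Wolf translations of $M$. If $f=f_1\times\dots\times f_k$ (allowing an Euclidean factor) with each $f_i$ a Clifford-Wolf translation of $M_i$ moving points the constant distance $c_i$, then $\rho(x,f(x))^2=\sum_i c_i^2$ is constant, so $f$ is a Clifford-Wolf translation of $M$. Conversely, let $f$ be any Clifford-Wolf translation of $M$. It need not a priori preserve the factors, but it permutes the de~Rham factors according to some permutation $\sigma$ of $\{1,\dots,k\}$ preserving isometry type (and fixing the index $0$, since the flat factor is intrinsically characterized). I claim $\sigma$ is the identity: if $\sigma$ moved $M_i$ to an isometric copy $M_j$ with $j\neq i$, then choosing a point $x$ with $x_i$ far from the image point would make $\rho(x,f(x))$ depend on $x$, contradicting constancy of displacement — more carefully, one varies $x_i$ alone over all of $M_i$ while holding the other coordinates fixed and uses that the $i$th contribution to $\rho(x,f(x))^2$ becomes $\rho_j(x_i,(\text{fixed point}))^2$ up to the fixed isometry, which is nonconstant in $x_i$ unless $M_j$ is a point. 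Hence $\sigma=\mathrm{id}$, $f$ preserves every factor, and by the cited Theorem~3.1.2 in \cite{Wolf62}, $f=f_0\times\dots\times f_k$ with $f_i\in\Isom(M_i)$; the displacement product formula again forces each $f_i$ to be a Clifford-Wolf translation. The main obstacle is precisely this last point: ruling out nontrivial permutations of isometric factors by a Clifford-Wolf translation, which requires the observation that constancy of displacement is incompatible with swapping factors of positive dimension (and of course isometric factors must have equal dimension). Once that is handled, everything else is the standard de~Rham splitting of isometry groups combined with the Pythagorean formula for displacements in a metric product.
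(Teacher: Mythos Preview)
The paper's proof is a one-liner: the second assertion is precisely Corollary~3.1.3 in \cite{Wolf62}, and the first assertion is then an immediate consequence of the second (pick $y,z\in M$ differing only in the $i$-th coordinate, decompose the Clifford--Wolf translation sending $y$ to $z$, and read off $f_i$). You are therefore reproving Wolf's corollary rather than citing it, which is a legitimate but longer route.

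Your sketch has one genuine gap, in the permutation argument. When $\sigma(i)=j\neq i$ and you vary $x_i$ alone, \emph{two} summands of $\rho(x,f(x))^2=\sum_l\rho_l(x_l,[f(x)]_l)^2$ change, not one: the $i$-th term $\rho_i(x_i,p)^2$ with $p=f_i(x_{\sigma^{-1}(i)})$ fixed, and the $j$-th term $\rho_j(x_j,f_j(x_i))^2=\rho_i(p',x_i)^2$ with $p'=f_j^{-1}(x_j)$ fixed. Your text tracks only one of these and asserts nonconstancy; you must also exclude the possibility that they compensate. This is easily repaired: if $\rho_i(x_i,p)^2+\rho_i(x_i,p')^2\equiv c$, then setting $x_i=p$ gives $c=\rho_i(p,p')^2$, while taking $x_i$ to be the midpoint of a minimizing geodesic from $p$ to $p'$ gives $c=\tfrac{1}{2}\rho_i(p,p')^2$; hence $p=p'$ and then $2\rho_i(x_i,p)^2\equiv 0$, forcing $M_i$ to be a point, contrary to irreducibility of the de~Rham factors. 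With this sentence added, your conclusion $\sigma=\mathrm{id}$ is justified and the rest goes through.

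A smaller remark: your Step~2 detour through the identity component (``a Clifford--Wolf translation moving points a small distance lies in $\Isom_0(M)$'') is valid only for small displacements, so it does not settle the non-restrictive case on its own. Since the first assertion follows directly from the second once you have it (exactly as the paper observes), you can simply delete that detour and deduce the Clifford--Wolf homogeneity of each $M_i$ from the product decomposition established in Step~3.
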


\begin{proof}
Obviously, the first assertion is a consequence of the second one,
that was proved in Corollary 3.1.3 in \cite{Wolf62}.
\end{proof}

\section{On Killing vector fields of constant length}

Here we consider some properties of Killing vector fields of
constant length on Riemanniann manifolds. Recall that a vector field
$X$ on a Riemannian manifold $(M,g)$ is called {\it Killing} if $L_X
g=0$. For a Killing vector field $X$ it is useful to consider the
operator $A_X$ defined on vector fields by the formula $A_X
V=-\nabla_VX$. It is clear that $A_X=L_X-\nabla_X$.

All assertions of the following lemma are well known (see e.g.
\cite{KN}).

\begin{lemma}\label{Kost}
Let $X$ be a Killing vector field on the Riemannian manifold
$(M,g)$. Then the following statements hold:

1) For any vector fields $U$ and $V$ on $M$ holds the equality
$g(\nabla_U X,V)+g(U,\nabla_V X)=0$. In other words, the operator
$A_X$ is skew-symmetric.

2) For every vector field $U$ on $M$ holds the equality
$$
R(X,U)=[\nabla_X,\nabla_U]-\nabla_{[X,U]}=[\nabla_U,A_X],
$$
where $R$ is the curvature tensor of $(M,g)$.

3) For any Killing vector field $X$ and for any vector fields
$U,V,W$ on a Riemannian manifold $(M,g)$ the following formula
holds:
$$
-g(R(X,U)V,W)=g(\nabla_U \nabla_V X,W)+g(\nabla_UV, \nabla_W X),
$$
where $R$ is the curvature tensor of $(M,g)$.
\end{lemma}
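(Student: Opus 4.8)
The plan is to prove the three items in order, each one feeding the next; all of it is standard bookkeeping with the metric-compatibility and torsion-freeness of the Levi-Civita connection.

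For (1), I would start from the coordinate-free expansion of the Killing equation: for all vector fields $U,V$,
\[
0=(L_Xg)(U,V)=Xg(U,V)-g([X,U],V)-g(U,[X,V]).
\]
Using metric compatibility $Xg(U,V)=g(\nabla_XU,V)+g(U,\nabla_XV)$ together with the torsion-free identity $[X,U]=\nabla_XU-\nabla_UX$, the right-hand side collapses to $g(\nabla_UX,V)+g(U,\nabla_VX)$. Hence this quantity vanishes, which is exactly the skew-symmetry of $A_X$, since $A_XW=-\nabla_WX$.

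For (2), the first equality $R(X,U)=[\nabla_X,\nabla_U]-\nabla_{[X,U]}$ is merely the definition of the curvature operator, so the content is the identity $[\nabla_U,A_X]=R(X,U)$. Writing $A_X=L_X-\nabla_X$ as operators on vector fields gives $[\nabla_U,A_X]=[\nabla_U,L_X]+[\nabla_X,\nabla_U]$, so it suffices to show $[\nabla_U,L_X]V=-\nabla_{[X,U]}V$. This is precisely the statement that the (local) flow of the Killing field $X$ preserves the Levi-Civita connection; infinitesimally it reads $L_X(\nabla_UV)=\nabla_{[X,U]}V+\nabla_U[X,V]$. I would either quote this standard fact or deduce it from uniqueness of the Levi-Civita connection (the pullback of $\nabla$ by an isometry is again metric and torsion-free, hence equals $\nabla$). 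Rearranging this identity gives $\nabla_U(L_XV)-L_X(\nabla_UV)=-\nabla_{[X,U]}V$, and substituting back yields $[\nabla_U,A_X]=-\nabla_{[X,U]}+[\nabla_X,\nabla_U]=R(X,U)$.

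For (3), I would just unwind (2) and apply (1). From (2), $R(X,U)V=\nabla_U(A_XV)-A_X(\nabla_UV)=-\nabla_U\nabla_VX+\nabla_{\nabla_UV}X$, so pairing with $W$ gives $g(R(X,U)V,W)=-g(\nabla_U\nabla_VX,W)+g(\nabla_{\nabla_UV}X,W)$. By the skew-symmetry from (1), $g(\nabla_{\nabla_UV}X,W)=-g(\nabla_UV,\nabla_WX)$, and inserting this identity turns the displayed equation into the asserted formula. The only step requiring any care is the connection-preservation identity used in (2); the rest is algebra.
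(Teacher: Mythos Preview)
Your argument is correct and matches the paper's almost verbatim for parts (1) and (3); the only difference is that for (2) the paper simply cites Kostant's Lemma~2.2 (and Kobayashi--Nomizu), whereas you supply the standard proof via $A_X=L_X-\nabla_X$ and the affine property of Killing flows. Nothing needs to change.
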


\begin{proof}
It is clear that $X\cdot g(U,V)=g(\nabla_X U,V)+g(U,\nabla_X V)$ and
$X\cdot g(U,V)=g([X,U],V)+g(U,[X, V])$. Therefore,
$$
g(\nabla_U X,V)+g(U,\nabla_V X)=g(\nabla_X U-[X,U],V)+g(U,\nabla_X
V-[X, V])=0,
$$
that proves the first assertion.

The second assertion is proved in Lemma 2.2 of \cite{Kostant55} (see
also Proposition 2.2 of Chapter 6 in \cite{KN}). The third assertion
follows from the previous one:
$$
g(R(X,U)V,W)=g(\nabla_U A_X V,W)- g(A_X \nabla_U V,W)=
$$
$$
-g(\nabla_U \nabla_V X,W)+ g(\nabla_U V,A_X W)=-g(\nabla_U \nabla_V
X,W)- g(\nabla_U V, \nabla_W X),
$$
because the operator $A_X$ is skew symmetric for a Killing vector
field $X$.
\end{proof}

Now we recall some well known properties of Killing vector field of
constant length.

\begin{lemma}\label{tr1}
Let $X$ be a Killing vector field $X$ on a Riemannian manifold
$(M,g)$. Then the following conditions are equivalent:

1) $X$ has constant length on $M$;

2) $\nabla_XX=0$ on $M$;

3) every integral curve of the field $X$ is a geodesic in $(M,g)$.
\end{lemma}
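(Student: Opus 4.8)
The plan is to prove the equivalence of the three conditions by showing $1)\Rightarrow 2)\Rightarrow 3)\Rightarrow 1)$, using the skew-symmetry of the operator $A_X$ from Lemma \ref{Kost}, part 1). First I would observe that for any Killing field $X$ one has $X\cdot g(X,X) = 2g(\nabla_X X, X)$, while on the other hand $g(\nabla_X X,X) = -g(A_X X,X) = 0$ automatically, since $A_X$ is skew-symmetric. Hence $g(X,X)$ is constant along every integral curve of $X$; this observation alone does not give $1)\Leftrightarrow 2)$, so the argument linking constancy of length to $\nabla_X X = 0$ needs the general identity relating $\nabla(g(X,X))$ to $A_X X$.

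The key computation is this: for \emph{any} Killing field $X$ and any vector field $V$,
\begin{equation*}
V\cdot g(X,X) = 2g(\nabla_V X, X) = -2g(A_X V, X) = 2g(V, A_X X) = 2g(V,-\nabla_X X),
\end{equation*}
where the third equality is the skew-symmetry of $A_X$. Thus $\operatorname{grad}\bigl(g(X,X)\bigr) = -2\nabla_X X$ on all of $M$. From this single identity, $1)\Rightarrow 2)$ is immediate: if $g(X,X)$ is constant then its gradient vanishes, so $\nabla_X X = 0$. The implication $2)\Rightarrow 1)$ is equally immediate from the same identity (the gradient of $g(X,X)$ vanishes, and $M$ is connected), though for the stated cyclic chain I would instead close the loop via $3)\Rightarrow 1)$.

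For $2)\Rightarrow 3)$: the condition $\nabla_X X = 0$ says precisely that the integral curves of $X$ are autoparallel, i.e. geodesics (parametrized proportionally to arc length, since they have constant speed as noted above). For $3)\Rightarrow 1)$: if every integral curve of $X$ is a geodesic, then along such a curve $\gamma$ with $\dot\gamma = X$ one has $\nabla_{\dot\gamma}\dot\gamma = 0$ possibly up to reparametrization; but Killing fields have constant speed along their flow lines (again $g(\nabla_X X, X)=0$), so in fact $\nabla_X X = 0$ along $\gamma$, hence everywhere, and then the gradient identity forces $g(X,X)$ to be locally constant, thus constant on the connected manifold $M$. I do not expect a genuine obstacle here; the only point requiring a little care is the reparametrization subtlety in $3)\Rightarrow 1)$ — one must note that a geodesic as an unparametrized curve together with the constant-speed property of a Killing field pins down $\nabla_X X = 0$ on the nose rather than merely $\nabla_X X \parallel X$.
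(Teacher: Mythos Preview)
Your proof is correct and follows essentially the same route as the paper: both arguments establish the identity $Y\cdot g(X,X) = -2\,g(\nabla_X X, Y)$ (equivalently $\operatorname{grad}\,g(X,X) = -2\nabla_X X$), from which $1)\Leftrightarrow 2)$ is immediate and $2)\Leftrightarrow 3)$ is just the geodesic equation. The only cosmetic difference is that you invoke the skew-symmetry of $A_X$ from Lemma~\ref{Kost} directly, whereas the paper expands $(L_X g)(X,Y)=0$ to reach the same identity; you also spell out the reparametrization point in $3)\Rightarrow 1)$ more carefully than the paper does.
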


\begin{proof}
It is suffices to note that the length of $X$ is constant along any
its integral curve, and for any Killing vector field $X$ and
arbitrary smooth vector field $Y$ on $(M,g)$ we have the following
equality:
$$
0=(L_Xg)(X,Y)=X\cdot g(X,Y)-g([X,X],Y)-g(X,[X,Y])= g(\nabla_X
X,Y)+g(X,\nabla_X Y)-
$$
$$
-g(X,[X,Y])= g(\nabla_X X,Y)+g(X,\nabla_Y X)= g(\nabla_X
X,Y)+\frac{1}{2}Y\cdot g(X,X).
$$
\end{proof}

\begin{pred}\label{criv2}
Let $Z$ be a Killing vector field of constant length and $X,Y$
arbitrary vector fields on a Riemannian manifold $(M,g)$. Then the
formula
$$
g(\nabla_X Z,  \nabla_Y Z)= g(R(X,Z)Z,Y)=g(R(Z,Y)X,Z)
$$
holds on $M$.
\end{pred}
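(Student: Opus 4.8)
The plan is to obtain both equalities from item~3) of Lemma~\ref{Kost}, specialized to the Killing field $Z$ itself, combined with the fact (Lemma~\ref{tr1}) that constant length of $Z$ is equivalent to $\nabla_Z Z = 0$.

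Concretely, I would first record that $\nabla_Z Z \equiv 0$ on $M$, and hence $\nabla_U(\nabla_Z Z) = 0$ for every vector field $U$. Then, writing the identity of part~3) of Lemma~\ref{Kost} for $Z$ in place of the Killing field $X$,
$$-g(R(Z,U)V,W)=g(\nabla_U \nabla_V Z,W)+g(\nabla_U V,\nabla_W Z),$$
I would set $V=Z$: the first term on the right-hand side vanishes and one is left with $-g(R(Z,U)Z,W)=g(\nabla_U Z,\nabla_W Z)$. Putting $U=X$, $W=Y$ and using the antisymmetry $R(Z,X)=-R(X,Z)$ of the curvature tensor in its first two arguments yields the first asserted identity $g(\nabla_X Z,\nabla_Y Z)=g(R(X,Z)Z,Y)$.

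For the remaining equality I would simply apply the pair-symmetry $g(R(A,B)C,D)=g(R(C,D)A,B)$ of the Riemann tensor with $A=X$, $B=C=Z$, $D=Y$, which turns $g(R(X,Z)Z,Y)$ into $g(R(Z,Y)X,Z)$. There is no genuine obstacle here; the only thing to watch is the bookkeeping of signs and of the slot occupied by $Z$ when specializing Lemma~\ref{Kost} and when invoking the symmetries of $R$, under the sign convention $R(X,U)=[\nabla_X,\nabla_U]-\nabla_{[X,U]}$ fixed in Lemma~\ref{Kost}.
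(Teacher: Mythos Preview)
Your proof is correct and follows essentially the same approach as the paper, relying on Lemma~\ref{Kost}(3), the identity $\nabla_Z Z=0$, and the symmetries of $R$. The only difference is in which slot of Lemma~\ref{Kost}(3) you place $Z$: you set $V=Z$, so the term $g(\nabla_U\nabla_Z Z,W)$ vanishes immediately, whereas the paper sets $W=Z$ (making the term $g(\nabla_Y X,\nabla_Z Z)$ vanish) and needs an additional preliminary step, differentiating $g(\nabla_X Z,Z)=0$ along $Y$; your specialization is thus slightly more direct.
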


\begin{proof}
Since $g(Z,Z)=\const$, then $X\cdot g(Z,Z)=2g(\nabla_X Z,Z)=0$.
Therefore,
$$
0=Y\cdot g(\nabla_X Z,Z)= g(\nabla_Y\nabla_X Z,Z)+ g(\nabla_X
Z,\nabla_Y Z).
$$
By Assertion 3) in Lemma \ref{Kost} we get
$$
g(\nabla_Y\nabla_X
Z,Z)=-g(R(Z,Y)X,Z)-g(\nabla_YX,\nabla_ZZ)=-g(R(Z,Y)X,Z),
$$
because $\nabla_ZZ=0$. Therefore, $g(\nabla_X Z,  \nabla_Y Z)=
g(R(Z,Y)X,Z)$. The formula $g(\nabla_X Z,  \nabla_Y Z)=
g(R(X,Z)Z,Y)$ follows from symmetries of the curvature tensor.
\end{proof}

\begin{lemma}\label{symm0}
For every Killing vector field of constant length $Z$ and any vector
fields $X,Y$ on a Riemannian manifold $(M,g)$ the following
equalities hold:
$$
g(R(X,Z)Z,\nabla_Y Z)+g(R(Y,Z)Z,\nabla_X Z)=0,
$$
$$
g(\nabla_Z \nabla_Y Z, \nabla_X Z)=g(R(X,Z)Z, \nabla_Z Y).
$$
\end{lemma}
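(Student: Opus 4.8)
The plan is to derive both identities from Proposition~\ref{criv2} by differentiating in appropriate directions, exactly as that proposition was itself obtained from Lemma~\ref{Kost}. For the first equality, I would start from the identity $g(\nabla_X Z,\nabla_Y Z)=g(R(X,Z)Z,Y)$ of Proposition~\ref{criv2} and apply the vector field $Z$ to both sides, or — more cleanly — exploit that $g(\nabla_Z Z,\cdot)=0$ together with skew-symmetry of $A_Z$. Concretely, since $Z$ has constant length we have $g(\nabla_U Z,Z)=0$ for all $U$, and differentiating $g(R(X,Z)Z,Y)$ symmetrized in $X,Y$ should produce the claimed antisymmetry. The cleanest route is probably to take the second formula of Proposition~\ref{criv2}, namely $g(\nabla_X Z,\nabla_Y Z)=g(R(Z,Y)X,Z)$, note the left side is symmetric in $X$ and $Y$, and then rewrite $g(R(X,Z)Z,\nabla_Y Z)$ using this symmetry after substituting $\nabla_Y Z$ for one of the arguments; the two cross terms then cancel because $g(\nabla_X Z,\nabla_{\nabla_Y Z} Z)$ is symmetric under swapping the roles of the two slots, while the curvature expression changes sign.

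For the second equality, I would differentiate the relation $g(\nabla_X Z,\nabla_Y Z)=g(R(X,Z)Z,Y)$ along $Z$. Applying $Z\cdot$ to the left-hand side gives $g(\nabla_Z\nabla_X Z,\nabla_Y Z)+g(\nabla_X Z,\nabla_Z\nabla_Y Z)$. On the right-hand side, $Z\cdot g(R(X,Z)Z,Y)$ expands via the metric compatibility of $\nabla$ and the product rule; using $\nabla_Z Z=0$ and the symmetries of $R$, plus Assertion~2 of Lemma~\ref{Kost} relating $R(\cdot,Z)$ to the commutator $[\nabla_\cdot,A_Z]$, this should collapse to terms involving $g(R(X,Z)Z,\nabla_Z Y)$ and the symmetric counterpart. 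Matching the two sides and using the first identity of this lemma to kill the antisymmetric part then isolates $g(\nabla_Z\nabla_Y Z,\nabla_X Z)=g(R(X,Z)Z,\nabla_Z Y)$. A technical point: one must be careful whether $\nabla_Z X$, $\nabla_Z Y$ appear as genuine covariant derivatives of the auxiliary fields $X,Y$ (which are arbitrary, not assumed Killing or parallel), so the bookkeeping of which covariant-derivative terms survive is the place where an error is easiest to make.

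The main obstacle I expect is precisely this bookkeeping: both identities are ``second-order'' consequences of the constant-length Killing condition, and the temptation is to commute covariant derivatives freely, which introduces curvature terms that must be tracked. The safe approach is to fix a point $p$ and work in geodesic normal coordinates there, or to choose the auxiliary fields $X,Y$ so that $\nabla X,\nabla Y$ vanish at $p$ — since both sides are tensorial in $X$ and $Y$, this entails no loss of generality and removes the spurious $\nabla_U X$, $\nabla_U Y$ terms, leaving only the intrinsic curvature and $\nabla Z$ contributions to balance. With that normalization the computation should reduce to a few lines, repeatedly invoking $\nabla_Z Z=0$, the skew-symmetry of $A_Z$ (Assertion~1 of Lemma~\ref{Kost}), Assertion~3 of Lemma~\ref{Kost}, and Proposition~\ref{criv2} itself.
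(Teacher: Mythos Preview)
Your plan for the first identity is correct and is exactly what the paper does: apply Proposition~\ref{criv2} with $\nabla_Y Z$ in the slot of $Y$ to write $g(R(X,Z)Z,\nabla_Y Z)=g(\nabla_X Z,\nabla_{\nabla_Y Z}Z)$, do the same with $X$ and $Y$ interchanged, and observe that the sum $g(U,\nabla_V Z)+g(V,\nabla_U Z)$ with $U=\nabla_X Z$, $V=\nabla_Y Z$ vanishes by the skew-symmetry of $A_Z$.

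For the second identity, however, your proposed route has a genuine gap. Differentiating $g(\nabla_X Z,\nabla_Y Z)=g(R(X,Z)Z,Y)$ along $Z$ produces on the left the \emph{symmetric} sum $g(\nabla_Z\nabla_X Z,\nabla_Y Z)+g(\nabla_Z\nabla_Y Z,\nabla_X Z)$, while the right-hand side expands to
\[
g\bigl((\nabla_Z R)(X,Z)Z,Y\bigr)+g(R(\nabla_Z X,Z)Z,Y)+g(R(X,Z)Z,\nabla_Z Y),
\]
and the $(\nabla_Z R)$ term does \emph{not} disappear via Assertion~2 of Lemma~\ref{Kost} or the symmetries of $R$; it is precisely the quantity whose vanishing is Theorem~\ref{symm1}, which in the paper is deduced \emph{from} the present lemma. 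Even after invoking the first identity, you are left with a symmetric relation from which the individual equality $g(\nabla_Z\nabla_Y Z,\nabla_X Z)=g(R(X,Z)Z,\nabla_Z Y)$ cannot be extracted without already knowing $(\nabla_Z R)(\cdot,Z)Z=0$. Your normalization $\nabla X=\nabla Y=0$ at $p$ does not help either: it kills $\nabla_Z Y$ at $p$ as well, so both sides of the target identity become zero there and you learn nothing, while the $(\nabla_Z R)$ term persists.

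The paper avoids this circularity by attacking $g(\nabla_Z\nabla_Y Z,\nabla_X Z)$ directly rather than through a $Z$-derivative of Proposition~\ref{criv2}. It uses the commutator formula $\nabla_Z\nabla_Y=\nabla_Y\nabla_Z+\nabla_{[Z,Y]}+R(Z,Y)$ applied to $Z$, together with $\nabla_Z Z=0$, to write
\[
g(\nabla_Z\nabla_Y Z,\nabla_X Z)=g(\nabla_{[Z,Y]}Z,\nabla_X Z)+g(R(Z,Y)Z,\nabla_X Z).
\]
Now Proposition~\ref{criv2} converts the first term to $g(R(X,Z)Z,[Z,Y])=g(R(X,Z)Z,\nabla_Z Y)-g(R(X,Z)Z,\nabla_Y Z)$, and the first identity of the lemma cancels the remaining two curvature terms. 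No $\nabla_Z R$ ever appears.
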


\begin{proof}
Let us prove the first equality. By Proposition \ref{criv2} we have
$$
g(R(X,Z)Z,\nabla_Y Z)+g(R(Y,Z)Z,\nabla_X Z)=
$$
$$
g(\nabla_X Z,\nabla_{\nabla_YZ} Z)+g(\nabla_Y Z,\nabla_{\nabla_XZ}
Z)= g(U,\nabla_V Z)+g(V,\nabla_U Z),
$$
where $U=\nabla_XZ$ and $V=\nabla_YZ$. Now, using Assertion 1) in
Lemma \ref{Kost}, we get $g(U,\nabla_V Z)+g(V,\nabla_U Z)=0$.

Further, since $\nabla_Z Z=0$ and for any vector field $W$ the
equalities
$$
\nabla_Z \nabla_W=\nabla_W \nabla_Z+\nabla_{[Z,W]}+R(Z,W), \quad
\nabla_Z W=\nabla_W Z+[Z,W]
$$
hold, we get (using Proposition \ref{criv2})
$$
g(\nabla_Z \nabla_Y Z, \nabla_X Z)=g(\nabla_{[Z,Y]}Z,\nabla_X
Z)+g(R(Z,Y)Z,\nabla_X Z)=
$$
$$
g(R(X,Z)Z,[Z,Y])+g(R(Z,Y)Z,\nabla_X Z)=
$$
$$
g(R(X,Z)Z,\nabla_Z Y)-g(R(X,Z)Z,\nabla_Y Z)-g(R(Y,Z)Z,\nabla_X Z).
$$
On the other hand, we have proved that $g(R(X,Z)Z,\nabla_Y
Z)+g(R(Y,Z)Z,\nabla_X Z)=0$, hence we get the second equality.
\end{proof}

Now we can prove the following theorem, that plays a key role in our
study.

\begin{theorem}\label{symm1}
For any Killing vector field of constant length $Z$ on a Riemannian
manifold $(M,g),$
$$ (\nabla_Z R)(\cdot, Z)Z \equiv 0.$$
\end{theorem}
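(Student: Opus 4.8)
The plan is to compute the symmetric tensor $(\nabla_Z R)(X,Z)Z$ evaluated against an arbitrary vector field $W$, and to show it vanishes by repeatedly using the identities already established, especially Proposition \ref{criv2} and Lemma \ref{symm0}. The starting point is the second Bianchi-type differentiation: since $\nabla_Z Z = 0$ by Lemma \ref{tr1}, we have
$$
(\nabla_Z R)(X,Z)Z = \nabla_Z\bigl(R(X,Z)Z\bigr) - R(\nabla_Z X, Z)Z - R(X,Z)(\nabla_Z Z) - \text{(a term with }\nabla_Z Z),
$$
so modulo the $\nabla_Z Z = 0$ terms the whole expression reduces to $\nabla_Z(R(X,Z)Z) - R(\nabla_Z X, Z)Z$ (after taking $X$ to be, say, parallel at a point, or just carrying the extra $R(\nabla_Z X,Z)Z$ along). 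Pairing with $W$, I would rewrite $g(R(X,Z)Z,W)$ using Proposition \ref{criv2} as $g(\nabla_X Z, \nabla_W Z)$, differentiate along $Z$, and then feed the pieces back through Proposition \ref{criv2} and the two identities of Lemma \ref{symm0}.

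Concretely, the key computation is to expand $Z\cdot g(\nabla_X Z, \nabla_W Z)$ as $g(\nabla_Z\nabla_X Z, \nabla_W Z) + g(\nabla_X Z, \nabla_Z\nabla_W Z)$. By the second identity in Lemma \ref{symm0} (with roles of $X,W$ in place of $X,Y$), $g(\nabla_Z\nabla_W Z, \nabla_X Z) = g(R(X,Z)Z, \nabla_Z W)$ and likewise $g(\nabla_Z\nabla_X Z, \nabla_W Z) = g(R(W,Z)Z,\nabla_Z X)$. Meanwhile the curvature-derivative term produces contributions of the form $g(R(X,Z)Z,\nabla_Z W)$ and $g(R(\nabla_Z X,Z)Z, W) = g(\nabla_{\nabla_Z X} Z, \nabla_W Z)$, again via Proposition \ref{criv2}. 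Collecting everything, the antisymmetric-in-$(X,W)$ parts should cancel by the first identity of Lemma \ref{symm0}, namely $g(R(X,Z)Z,\nabla_W Z) + g(R(W,Z)Z,\nabla_X Z) = 0$, and what remains is a symmetric bilinear form in $(X,W)$ that must be shown to be zero; since $(\nabla_Z R)(X,Z)Z$ paired with $W$ is already symmetric in $X,W$ (as $g(R(X,Z)Z,W)$ is), consistency of the two cancellation patterns forces the form to vanish identically.

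The main obstacle is bookkeeping: one has to be careful about which vector fields are being differentiated, since $X$ and $W$ are not parallel, so terms like $R(\nabla_Z X, Z)Z$ and $\nabla_{\nabla_Z X} Z$ genuinely appear and must be matched up. The cleanest way to handle this is to fix a point $p \in M$, extend arbitrary tangent vectors at $p$ to vector fields $X,W$ that are parallel at $p$ along $Z$ (so $\nabla_Z X = \nabla_Z W = 0$ at $p$), which is legitimate since the final tensorial identity only depends on the values of $X,W$ at $p$. With that normalization the identity collapses to $g(\nabla_Z(R(X,Z)Z), W) = 0$, i.e. $Z\cdot g(R(X,Z)Z,W) = 0$ at $p$, which via Proposition \ref{criv2} is exactly $Z\cdot g(\nabla_X Z,\nabla_W Z) = g(\nabla_Z\nabla_X Z,\nabla_W Z)+g(\nabla_X Z,\nabla_Z\nabla_W Z)$; applying the second identity of Lemma \ref{symm0} to each summand gives $g(R(W,Z)Z,0) + g(R(X,Z)Z,0) = 0$ (the $\nabla_Z X$, $\nabla_Z W$ factors vanish at $p$), completing the proof. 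The subtlety to watch is verifying that Lemma \ref{symm0}'s second identity applies with the needed choice of arguments and that no stray term survives the parallel normalization; once that is checked, the result follows immediately.
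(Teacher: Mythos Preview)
Your proposal is correct and follows essentially the same route as the paper: differentiate the identity $g(\nabla_X Z,\nabla_W Z)=g(R(X,Z)Z,W)$ from Proposition~\ref{criv2} along $Z$ and reduce each resulting summand via the second identity of Lemma~\ref{symm0}. The only difference is cosmetic: the paper keeps $X,W$ arbitrary and observes that the extra $g(R(W,Z)Z,\nabla_Z X)$ and $g(R(X,Z)Z,\nabla_Z W)$ contributions appear identically on both sides and hence cancel, whereas you eliminate them in advance by the parallel-at-$p$ normalization---so your meandering middle paragraph invoking the first identity of Lemma~\ref{symm0} and a symmetry argument is unnecessary, and the clean final paragraph already suffices.
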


\begin{proof}
It suffices to prove that $g((\nabla_Z R)(X,Z)Z,Y)=0$ for every
vector fields $X$ and $Y$ on $M$. From Proposition \ref{criv2} we
know that
$$
g(\nabla_X Z,  \nabla_Y Z)=g(R(X,Z)Z,Y).
$$
Therefore,
$$
Z\cdot g(\nabla_X Z,  \nabla_Y Z)= Z\cdot g(R(X,Z)Z,Y).
$$
Further, by Lemma \ref{symm0}, we get
$$
Z\cdot g(\nabla_X Z,  \nabla_Y Z)=g(\nabla_Z \nabla_X Z, \nabla_Y
Z)+g(\nabla_X Z, \nabla_Z \nabla_Y Z)=
$$
$$
g(R(Y,Z)Z, \nabla_Z X)+g(R(X,Z)Z, \nabla_Z Y).
$$
On the other hand,
$$
Z\cdot g(R(X,Z)Z,Y)=g((\nabla_Z R)(X,Z)Z,Y)+g(R(\nabla_Z
X,Z)Z,Y)+g(R(X,Z)Z,\nabla_Z Y)=
$$
$$
g((\nabla_Z R)(X,Z)Z,Y)+g(R(Y,Z)Z,\nabla_ZX)+g(R(X,Z)Z,\nabla_Z Y).
$$
Combining the equations above, we get $g((\nabla_Z R)(X,Z)Z,Y)=0$,
that proves the theorem.
\end{proof}

Note, that the condition $(\nabla_Z R)(\cdot, Z)Z=0$ means that for
any geodesic $\gamma$ that is an integral curve of the field $Z$, a
derivative of any normal Jacobi field along $\gamma$ is also a
normal Jacobi field (see Section 2.33 in \cite{Bes1}).

\section{Interrelations of Clifford-Wolf isometries and Killing fields with constant length}

There exists a connection between Killing vector fields of constant
length and Clifford-Wolf translations in a Riemannian manifold
$(M,g)$. The following proposition is evident.

\begin{pred}\label{tr3}
Suppose that a one-parameter isometry group $\gamma(t)$ on $(M,g)$, generated
by a Killing vector field $X$, consists of Clifford-Wolf
translations. Then $X$ has constant length.
\end{pred}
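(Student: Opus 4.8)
The plan is to read off the constancy of $\|X\|_g$ from the first-order behaviour of the displacement functions of the isometries $\gamma(t)$. For each fixed $t$, the hypothesis that $\gamma(t)$ is a Clifford-Wolf translation means precisely that $\rho\bigl(x,\gamma(t)(x)\bigr)$ does not depend on $x$; call this common value $c(t)$. Fix a point $x\in M$ and consider the smooth curve $\sigma_x(s):=\gamma(s)(x)$, which is the integral curve of $X$ through $x$, so that $\sigma_x'(0)=X(x)$.

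Next I would invoke the elementary Riemannian fact that for any smooth curve $\sigma$ with $\sigma(0)=x$ one has $\rho\bigl(x,\sigma(t)\bigr)=|t|\,\|\sigma'(0)\|_g+o(t)$ as $t\to 0$; this follows from $\exp_x$ being a diffeomorphism near the origin and a radial isometry there (equivalently, from comparing $\rho$ with the Euclidean norm in a normal coordinate chart centred at $x$). Applying this to $\sigma_x$ gives
$$ c(t)=\rho\bigl(x,\gamma(t)(x)\bigr)=|t|\,\|X(x)\|_g+o(t)\qquad (t\to 0). $$
Dividing by $|t|$ and letting $t\to 0$ yields $\|X(x)\|_g=\lim_{t\to 0}c(t)/|t|$, and the right-hand side is manifestly independent of $x$. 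Hence $X$ has constant length on $M$.

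There is essentially no obstacle here, which is why the statement is called evident; the only point meriting a line of justification is the asymptotic expansion of $\rho\bigl(x,\gamma(t)(x)\bigr)$, where the upper bound $\rho\bigl(x,\gamma(t)(x)\bigr)\le\int_0^{|t|}\|\sigma_x'(s)\|_g\,ds=|t|\,\|X(x)\|_g+o(t)$ is immediate from arc length and the matching lower bound comes from the normal-coordinate comparison just mentioned. If one prefers to bypass even that, one can instead note that $\gamma(t)$, being an isometry generated by $X$, satisfies $\gamma(t)_\ast X=X$, so $\|X\|_g$ is automatically constant along each integral curve of $X$ (cf.\ Lemma \ref{tr1}); the Clifford-Wolf property is then exactly what is needed to transfer this constant from one integral curve to another, and the limit computation above is the cleanest way to make that transfer.
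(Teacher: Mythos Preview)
Your argument is correct. The paper itself offers no proof at all---it simply declares the proposition ``evident''---so what you have written is a precise justification of that evidence rather than a different approach. The displacement-function argument you give (computing $\lim_{t\to 0} c(t)/|t|$ via the first-order expansion of the distance along a smooth curve) is exactly the natural way to cash out the authors' claim, and your remarks on the upper and lower bounds are the right level of care. One small sharpening: since $g(X,X)$ is already constant along each integral curve of $X$ (as you note), the arc-length integral $\int_0^{|t|}\|\sigma_x'(s)\|_g\,ds$ equals $|t|\,\|X(x)\|_g$ exactly, not merely up to $o(t)$; this does not affect the argument but makes the upper bound cleaner.
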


Proposition \ref{tr3} can be partially inverted. More exactly, we
have

\begin{pred}[\cite{BerNikTrans}]\label{tr4}
Suppose a Riemannian manifold $(M,g)$ has the injectivity radius,
bounded from below by some positive constant (in particularly, this
condition is satisfied for arbitrary compact or homogeneous
manifold), and $X$ is a Killing vector field on $(M,g)$ of constant
length. Then isometries $\gamma(t)$ from the one-parameter motion
group, generated by the vector field $X$, are Clifford-Wolf
translations if $t$ is close enough to $0$.
\end{pred}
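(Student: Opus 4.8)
The plan is to show that for $t$ near $0$, the displacement $\rho(x,\gamma(t)(x))$ is independent of $x$, so that $\gamma(t)$ is a Clifford-Wolf translation. The natural candidate is $\rho(x,\gamma(t)(x)) = |t|\cdot\|X\|$, where $\|X\|$ is the (constant) length of $X$. Since $X$ has constant length, by Lemma \ref{tr1} every integral curve of $X$ is a geodesic parametrized proportionally to arclength; hence the curve $s\mapsto\gamma(s)(x)$, $s\in[0,t]$, is a geodesic segment of length $|t|\cdot\|X\|$ joining $x$ to $\gamma(t)(x)$. Therefore $\rho(x,\gamma(t)(x)) \le |t|\cdot\|X\|$ for all $x$ and all $t$. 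The point of the argument is to prove the reverse inequality for small $|t|$, and this is exactly where the lower bound $r_0>0$ on the injectivity radius enters.

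The key step: fix $t$ with $|t|\cdot\|X\| < r_0$. I would argue that this geodesic segment from $x$ to $\gamma(t)(x)$ is in fact minimizing, so that $\rho(x,\gamma(t)(x)) = |t|\cdot\|X\|$ exactly. The clean way to see this is to consider the geodesic variation $(s,u)\mapsto\gamma(s)(\gamma_u(x))$ obtained by letting $x$ vary along a transversal geodesic $\gamma_u$, $u\in(-\varepsilon,\varepsilon)$, with $\gamma_0 = x$; equivalently, one studies Jacobi fields along the integral-curve geodesic. By Theorem \ref{symm1}, $(\nabla_Z R)(\cdot,Z)Z\equiv 0$ for $Z = X$, which (as the remark after that theorem notes) means the derivative of a normal Jacobi field along an integral curve of $X$ is again a normal Jacobi field; this gives strong control on conjugate points along these special geodesics. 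Combined with the injectivity radius bound, one concludes that no point $\gamma(s)(x)$ with $0<|s|\le|t|$ is conjugate to $x$ along the segment, and that the segment contains no pair of points realizing a shortcut — hence it is minimizing and $\rho(x,\gamma(t)(x)) = |t|\cdot\|X\|$.

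Alternatively, and perhaps more robustly, I would invoke the standard fact that in a ball of radius less than the injectivity radius the geodesic between two points is unique and minimizing: since $x$ and $\gamma(t)(x)$ both lie within distance $\le|t|\cdot\|X\|<r_0$ of each other, the exponential map $\exp_x$ is a diffeomorphism on the ball of radius $r_0$, so the unique geodesic from $x$ to $\gamma(t)(x)$ inside that ball is the minimizing one; but we exhibited the integral-curve segment as such a geodesic of length $|t|\cdot\|X\|$, hence it is the minimizing one and the distance equals $|t|\cdot\|X\|$. This holds for every $x$ with the same bound $|t|<r_0/\|X\|$ (using that $\|X\|$ is a global constant), so $\rho(x,\gamma(t)(x))\equiv|t|\cdot\|X\|$ is independent of $x$, proving $\gamma(t)$ is a Clifford-Wolf translation.

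The main obstacle is making precise why the integral-curve geodesic from $x$ to $\gamma(t)(x)$ is the one captured inside the injectivity ball — i.e., ruling out that the true minimizing geodesic wraps around or that $\gamma(t)(x)$ could be reached more cheaply by leaving a neighborhood of the segment. The uniform lower bound on the injectivity radius is precisely what removes this difficulty: it guarantees a single scale $r_0$ working at every point simultaneously, so that for $|t|<r_0/\|X\|$ every displacement geodesic is automatically minimizing. One should also remark that completeness (needed for $\exp$ to be defined everywhere and for the one-parameter group to be complete) is automatic here, e.g.\ since manifolds with $\inj$ bounded below that are homogeneous are complete, and in the compact case trivially so.
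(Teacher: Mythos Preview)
The paper does not actually prove this proposition: it is quoted, with attribution, from \cite{BerNikTrans}, and no argument is given in the present paper. So there is no proof here to compare your attempt against.

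That said, your second (``alternative'') route is correct and is the standard argument, and it can be stated even more directly than you do. Since the integral curve of $X$ through $x$ is the geodesic $s\mapsto \exp_x(sX(x))$ (Lemma~\ref{tr1}), one has $\gamma(t)(x)=\exp_x(tX(x))$. The hypothesis $\inj(M)\ge r_0$ means that for every $x$ the cut locus lies at distance $\ge r_0$, so $\rho(x,\exp_x(v))=|v|$ whenever $|v|<r_0$. Taking $v=tX(x)$ with $|t|\,\|X\|<r_0$ gives $\rho(x,\gamma(t)(x))=|t|\,\|X\|$, the same constant for every $x$. This is the whole proof.

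Your first route, invoking Theorem~\ref{symm1} and Jacobi-field control of conjugate points, is unnecessary and does not quite do what you want: absence of conjugate points on an interval does not by itself make a geodesic minimizing (the cut point can occur before the first conjugate point), so you would still need the injectivity-radius argument in the end. Drop that paragraph.

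On your closing remark about completeness: the hypothesis $\inj\ge r_0>0$ already forces geodesic completeness (from any point every geodesic extends at least length $r_0$, hence extends indefinitely), so the flow of $X$ is globally defined and there is nothing to worry about.
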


\begin{theorem}[\cite{BerNikDGA}]\label{killi}
Let $(M,g)$ be a compact homogeneous Riemannian manifold. Then there
exists a positive number $s>0$ such that for arbitrary motion $f$ of
the space $(M,g)$ with maximal displacement $\delta$, which is less
than $s$, there is a unique Killing vector field $X$ on $(M,g)$ such
that $\max_{x\in M}\sqrt{g(X(x),X(x))}=1$ and
$\gamma_{X}(\delta)=f$, where $\gamma_{X}(t)$, $t\in \mathbb{R}$, is
the one-parameter motion group of $(M,\mu)$, generated by the field
$X$. If moreover $f$ is a Clifford-Wolf translation, then the
Killing vector field $X$ has unit length on $(M,g)$.
\end{theorem}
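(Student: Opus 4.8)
The plan is to pass to the isometry group, represent $f$ by its Lie-group exponential, and then read off both $X$ and the number $\delta$ from the elementary metric geometry of one-parameter isometry groups; the only non-formal ingredient will be a local version of the principle (Lemma \ref{tr1}) that a Killing field has constant length precisely when its integral curves are geodesics. Since $M$ is compact, $G:=\Isom(M,g)$ is a compact Lie group; I identify its Lie algebra $\mathfrak g$ with the finite-dimensional space of Killing fields of $(M,g)$ and equip it with the norm $\|Y\|:=\max_{x\in M}\sqrt{g(Y(x),Y(x))}$ (all norms on $\mathfrak g$ being equivalent). Let $i:=\inj(M,g)>0$. I choose a star-shaped neighbourhood $W$ of $0$ in $\mathfrak g$ on which $\exp_G$ is a diffeomorphism onto an open neighbourhood $V$ of $\mathrm{id}$ and on which $\|Y\|<i$. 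The displacement $d(h):=\max_{x\in M}\rho(x,h(x))$ is continuous on $G$ with $d^{-1}(0)=\{\mathrm{id}\}$, so $s_0:=\min_{G\setminus V}d>0$ by compactness of $G\setminus V$; I also fix $s_1>0$ with $\{\|Y\|<s_1\}\subseteq W$ and put $s:=\min(s_0,s_1)$. One may assume $f\ne\mathrm{id}$, so that $\delta:=d(f)>0$.

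Given $0<\delta<s$, we have $f\in V$, hence $f=\exp_G(Y)$ for a unique $Y\in W\setminus\{0\}$; set $m:=\|Y\|<i$. For each $x\in M$ the curve $t\mapsto\exp_G(tY)(x)$, $t\in[0,1]$, joins $x$ to $f(x)$ and has length $\|Y(x)\|$, since the length of a Killing field is constant along its own integral curves (see the proof of Lemma \ref{tr1}); therefore
\[
\rho(x,f(x))\le\|Y(x)\|\le m\qquad(x\in M),
\]
so $\delta\le m$. The reverse inequality is the crux. Pick $x_0$ with $\|Y(x_0)\|=m$. Along the whole integral curve $c(t):=\exp_G(tY)(x_0)$ the length $\|Y\|$ is constantly $m$, i.e.\ equals its global maximum, so the function $g(Y,Y)$ is critical at every $p=c(t)$; since $W\cdot g(Y,Y)=2g(\nabla_W Y,Y)=-2g(\nabla_Y Y,W)$ for all $W$ by assertion 1) of Lemma \ref{Kost}, this forces $\nabla_Y Y(p)=0$, so $c$ is a geodesic of constant speed $m$. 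As $m<i$, the segment $c|_{[0,1]}$ is minimizing, whence $\rho(x_0,f(x_0))=m$ and $\delta=m=\max_{x\in M}\|Y(x)\|$. Then $X:=\delta^{-1}Y$ satisfies $\max_{x\in M}\sqrt{g(X(x),X(x))}=1$ and $\gamma_X(\delta)=\exp_G(\delta X)=\exp_G(Y)=f$.

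For uniqueness, if $X'$ also has $\max\|X'\|=1$ and $\gamma_{X'}(\delta)=f$, then $\delta X'$ has norm $\delta<s\le s_1$, hence $\delta X'\in W$; since $\delta X=Y\in W$ as well and $\exp_G|_W$ is injective with $\exp_G(\delta X)=f=\exp_G(\delta X')$, we get $X=X'$. If in addition $f$ is a Clifford-Wolf translation, then $\rho(x,f(x))=\delta$ for all $x$, which combined with the displayed inequality $\rho(x,f(x))\le\|Y(x)\|\le m=\delta$ forces $\|Y(x)\|\equiv\delta$; hence $Y$, and with it $X=\delta^{-1}Y$, has constant length $1$ on $M$. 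I expect the main obstacle to be exactly the middle step: recognizing that a global maximum point of $\|Y\|$ sits on a geodesic integral curve of $Y$, and using the injectivity radius to make that geodesic segment minimizing. The remaining ingredients --- the Myers--Steenrod theorem, continuity of $d$, the local diffeomorphism property of $\exp_G$ at $0$, and the equivalence of norms on $\mathfrak g$ --- are routine.
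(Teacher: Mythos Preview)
The paper does not prove this statement; it is quoted from \cite{BerNikDGA} without argument, so there is no proof in the paper to compare yours against. Your argument is correct and self-contained. The crucial step---noting that $g(Y,Y)$ is constant along every integral curve of $Y$, so the integral curve through a global maximum point of $g(Y,Y)$ consists entirely of critical points, whence $\nabla_YY=0$ along it and the curve is a geodesic of length $m<\inj(M,g)$, hence minimizing---is exactly the right idea and gives $\delta=m=\|Y\|$. The uniqueness argument via injectivity of $\exp_G$ on $W$ and the Clifford--Wolf addendum via the squeeze $\delta=\rho(x,f(x))\le\|Y(x)\|\le m=\delta$ are clean. One incidental remark: your proof never uses homogeneity of $M$, only compactness (to get $\inj(M,g)>0$ and compactness of $G$); the hypothesis is present because this is the setting in which the result is applied in Theorem~\ref{rcwhom}.
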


From previous results we easily get the following

\begin{theorem}\label{rcwhom}
A Riemannian manifold $(M,g)$ is restrictively Clifford-Wolf
homogeneous if and only if it is complete and every geodesic $\gamma$ in $(M,g)$ is an
integral curve of a Killing vector field of constant length on
$(M,g)$.
\end{theorem}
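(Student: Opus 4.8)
The plan is to prove the two directions separately, using the preparatory results of Sections 3 and 4.

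\emph{Necessity.} Suppose $(M,g)$ is restrictively Clifford-Wolf homogeneous. By Lemma \ref{first} it is homogeneous and complete. Fix a geodesic $\gamma$ and a point $x=\gamma(0)$ with unit velocity $v=\dot\gamma(0)$. The idea is to produce, for every parameter value, a Clifford-Wolf translation moving $x$ along $\gamma$ by the corresponding distance, and then to assemble the associated Killing fields into a single Killing field of constant length whose flow line through $x$ is $\gamma$. More precisely: for small $t>0$ the point $\gamma(t)$ lies in $U(x,r(x))$, so there is a Clifford-Wolf translation $s_t$ with $s_t(x)=\gamma(t)$; since $M$ is homogeneous (hence has injectivity radius bounded below) and $s_t$ has maximal displacement $t$, Theorem \ref{killi} associates to $s_t$ a unique unit Killing field $X_t$ with $\gamma_{X_t}(t)=s_t$. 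One then checks that $X_t(x)=v$ for all small $t$ (the displacement vector of a small Clifford-Wolf translation along a minimizing geodesic points along that geodesic), and by the uniqueness clause in Theorem \ref{killi} the fields $X_t$ are independent of $t$; call the common field $X$. Since $X$ has constant length, Lemma \ref{tr1} says its integral curves are geodesics, and the integral curve through $x$ has initial velocity $v$, hence coincides with $\gamma$ by uniqueness of geodesics. Here completeness guarantees that $\gamma$ and the flow of $X$ are defined for all time, so the local identification extends to all of $\gamma$.

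\emph{Sufficiency.} Suppose $(M,g)$ is complete and every geodesic is an integral curve of a Killing field of constant length. Given $x\in M$, choose $r(x)>0$ smaller than the injectivity radius at $x$ (positive by completeness, and locally bounded below). For $y,z\in U(x,r(x))$, join them by the unique minimizing geodesic $\gamma$ from $y$ to $z$; by hypothesis $\gamma$ is an integral curve of a constant-length Killing field $X$. By Proposition \ref{tr4} (applicable since $M$ is homogeneous by the necessity direction applied after we know the space is at least restrictively CW-homogeneous — or rather, we argue directly that completeness plus the geodesic condition forces homogeneity, or we simply invoke that a complete manifold covered by integral curves of Killing fields of constant length through every point has injectivity radius bounded below on compact sets, which suffices locally), the time-$\rho(y,z)$ map $\gamma_X(\rho(y,z))$ of the flow of $X$ is a Clifford-Wolf translation provided $\rho(y,z)$ is small; shrinking $r(x)$ if necessary we ensure this, and this translation carries $y$ to $z$.

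\emph{Main obstacle.} The delicate point is the necessity direction: showing that the locally defined Killing fields $X_t$ from Theorem \ref{killi} are all equal and glue to a global field whose flow line is exactly $\gamma$, i.e.\ controlling the displacement vector $X_t(x)$ and invoking uniqueness correctly, and then propagating this identification along the whole (complete) geodesic rather than just a short arc. A secondary technical nuisance is making Proposition \ref{tr4} applicable in the sufficiency direction, which requires knowing a positive lower bound for the injectivity radius; one resolves this by first establishing homogeneity (or by working locally on relatively compact sets, where such a bound is automatic). I expect the gluing/uniqueness argument to be where the real content lies; the rest is bookkeeping with the already-established Propositions \ref{tr3}, \ref{tr4} and Theorem \ref{killi}.
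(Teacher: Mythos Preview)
Your necessity argument contains a genuine error and an unnecessary complication. The ``uniqueness clause'' in Theorem~\ref{killi} does \emph{not} show that the fields $X_t$ are independent of $t$: that uniqueness is for a \emph{fixed} isometry $f$, whereas different parameter values $t$ give different Clifford--Wolf translations $s_t$. More importantly, the whole family $\{X_t\}$ and the gluing problem you identify as the ``main obstacle'' are unnecessary. The paper's argument is: reduce first to the compact case via Theorem~\ref{product} (Theorem~\ref{killi} is stated only for \emph{compact} homogeneous manifolds, a hypothesis you do not verify), then pick a \emph{single} point $y=\gamma(\varepsilon)$ with $\varepsilon<\min\{r(x),r_{\inj}(x),s\}$, get \emph{one} Clifford--Wolf translation $f$ with $f(x)=y$, and apply Theorem~\ref{killi} once to obtain \emph{one} unit Killing field $X$ with $\gamma_X(\varepsilon)=f$. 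By Lemma~\ref{tr1} the integral curve of $X$ through $x$ is a geodesic; it reaches $y=\gamma(\varepsilon)$ at time $\varepsilon$, and since $\varepsilon<r_{\inj}(x)$ there is only one such geodesic, namely $\gamma$. One field, one step, no gluing.

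Your sufficiency argument is, as you half-acknowledge, circular: you invoke homogeneity ``by the necessity direction applied after we know the space is restrictively CW-homogeneous,'' which is exactly what you are trying to prove. The clean fix is direct: under the hypothesis (complete, every geodesic an integral curve of a constant-length Killing field), $M$ is homogeneous outright, since for any $x,y$ the flow of the Killing field tangent to a geodesic from $x$ to $y$ carries $x$ to $y$. Homogeneity then gives a uniform positive lower bound on the injectivity radius, so Proposition~\ref{tr4} applies and yields the required Clifford--Wolf translations for nearby pairs of points.
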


\begin{proof}
Since by Theorem \ref{product} every restrictively Clifford-Wolf
homogeneous Riemannian manifold is a direct metric product of
Euclidean space and a compact restrictively Clifford-Wolf
homogeneous Riemannian manifold, it is sufficient to consider the compact
case.

Suppose that $(M,g)$ is restrictively Clifford-Wolf homogeneous. Let
us take any $x \in M$ and any geodesic $\gamma$ through the point
$x$. Consider the number $r(x)>0$ as in Definition \ref{rcw}, and
take any point $y\in \gamma$, $y\neq x$, such that
$\varepsilon:=\rho(x,y) < \min\{r(x),r_{\inj}(x),s\}$, where
$r_{\inj}(x)$ is the injectivity radius at $x$ and $s$ is taken from
the statement of Theorem \ref{killi}. Then by Definition \ref{rcw}
there is a Clifford-Wolf translation moving $x$ to $y$. By Theorem
\ref{killi}, there is a unit Killing field $X$ on $(M,g)$ such that
$\gamma_{X}(\varepsilon)=f$, where $\gamma_{X}(t)$, $t\in
\mathbb{R}$ is the one-parameter motion group generated by the field
$X$. By Lemma \ref{tr1} an integral curve of $X$ through $x$ is a
geodesic, that evidently coincides with $\gamma$. Since $x$ is an
arbitrary point of $M$, then any geodesic $\gamma$ on $(M,g)$ is an
integral curve of a Killing field of constant length on $(M,g)$.

Now, suppose that every geodesic $\gamma$ on $(M,g)$ is an integral
curve of a Killing field of constant length on $(M,g)$. By
Proposition \ref{tr4} there is $\delta>0$ such that for any unit
Killing vector field $X$ on $(M,g)$, all isometries $\gamma(t)$,
$|t|<\delta$, from the one-parameter motion group, generated by $X$,
are Clifford-Wolf translations. Now, for any $x,y \in M$ such that
$\rho(x,y) <\delta$, there is a geodesic $\gamma$ through $x$ and
$y$ such that the segment between $x$ and $y$ has length
$\rho(x,y)$. Let $X$ be a unit Killing field such that $\gamma$ is
an integral curve of $X$. Consider a one-parameter motion group
$\gamma(t)$, $t\in \mathbb{R}$, generated by $X$. Then
$s_1=\gamma(\rho(x,y))$ and $s_2=\gamma(-\rho(x,y))$ are
Clifford-Wolf isometries on $(M,g)$, and either $s_1(x)=y$ or
$s_2(x)=y$. Therefore, $(M,g)$ is restrictively Clifford-Wolf
homogeneous.
\end{proof}

Let us cite the following result.

\begin{theorem}[\cite{BerNikTrans}]\label{Posit}
Let $M$ be a symmetric Riemannian space, $X$ is a Killing vector
field of constant length on $M$. Then the one-parameter isometry
group $\mu(t)$, $t \in \mathbb{R}$, of the space $M$, generated by
the field $X$, consists of Clifford-Wolf translations. Moreover, if
the space $M$ has positive sectional curvature, then the flow
$\mu(t)$, $t\in \mathbb{R}$, admits a factorization up to a free
isometric action of the circle $S^1$ on $M$.
\end{theorem}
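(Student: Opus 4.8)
The plan is to prove Theorem~\ref{Posit} in two stages, first establishing the Clifford-Wolf property for the flow $\mu(t)$ on a general symmetric space, then analyzing the $S^1$-factorization under the additional hypothesis of positive curvature. For the first stage, the natural approach is to exploit the extra rigidity available on symmetric spaces: there the curvature tensor is parallel, $\nabla R \equiv 0$, so Theorem~\ref{symm1} is automatic and we need more. The key observation is that for a Killing field $X$ of constant length on a symmetric space, the function $x\mapsto \rho(x,\mu(t)(x))$ should in fact be \emph{constant}, not merely locally so. One way to see this: fix $t$ and consider the displacement function $f_t(x)=\rho(x,\mu(t)(x))^2$. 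On a symmetric space one can use the transvections (the symmetric-space structure gives a large transitive group of isometries) together with the fact that $X$ has constant length to show $f_t$ is invariant under enough isometries to be constant; alternatively, one invokes the cited companion paper \cite{BerNikTrans} where the needed analysis of Killing fields of constant length on symmetric spaces is carried out, so that this step may legitimately be quoted. Concretely, by Proposition~\ref{tr4} (applicable since symmetric spaces are homogeneous, hence have positive injectivity radius) the isometries $\mu(t)$ are Clifford-Wolf for $|t|$ small; the remaining task is to upgrade "small $t$" to "all $t$", which on a symmetric space follows because the set of $t$ for which $\mu(t)$ is Clifford-Wolf is a subgroup of $\mathbb{R}$ (compositions and inverses of Clifford-Wolf translations commuting with a transitive group remain Clifford-Wolf, cf.\ the centralizer argument in Section~1) that contains a neighbourhood of $0$, hence is all of $\mathbb{R}$.

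For the second stage, assume $M$ has positive sectional curvature. The goal is to show the flow $\mu(t)$ descends to a free action of $S^1$, i.e.\ that $\mu$ is periodic with some period $T>0$ and that $\mu(t)$ has no fixed points for $0<t<T$. Freeness is the easier part: if $\mu(t_0)(x_0)=x_0$ for some $x_0$ and $t_0\ne 0$, then $X(x_0)$ and the integral curve through $x_0$ would have to close up; but more to the point, since $\mu(t_0)$ is a Clifford-Wolf translation it moves \emph{every} point the same distance $\rho(x_0,\mu(t_0)(x_0))=0$, so $\mu(t_0)=\mathrm{id}$. Thus the stabilizer of the flow at every point coincides with the kernel of $\mathbb{R}\to \Isom(M)$, which gives freeness of the induced action on the quotient group. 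Periodicity is where positive curvature enters: by Lemma~\ref{tr1} every integral curve of $X$ is a geodesic, traversed at constant speed $c=|X|$; since $M$ is compact (a complete simply connected—or even just complete—Riemannian manifold of positive curvature is compact by Bonnet–Myers) all geodesics are defined for all time, and one must show they are periodic with a common period. Here one uses that $X$ has \emph{constant} length: the competing integral-curve geodesics are all congruent under the centralizer of $X$ (which acts transitively, by Ozols' result cited in Section~1, or by the symmetric-space homogeneity), hence share a single period $L/c$ where $L$ is the common length of these closed geodesics. Positive curvature guarantees via the second-variation/index arguments (or directly via the structure theory in \cite{BerNikTrans}) that these geodesics cannot be non-closed dense lines on a torus but must actually close up. Therefore $\mu(T)=\mathrm{id}$ for $T=L/c$, and $\mu$ factors through $\mathbb{R}/T\mathbb{Z}=S^1$ acting freely.

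The main obstacle I expect is the periodicity claim in the positive-curvature case: showing that the integral-curve geodesics of $X$ are genuinely closed (not merely recurrent), and that they close up with a single common period, so that the flow is $S^1$ rather than a dense line in a higher torus. On a general symmetric space a Killing field of constant length need not generate a periodic flow (e.g.\ irrational flows on flat tori), so positivity of curvature must be used essentially; the cleanest route is to pass to the compact case via Bonnet–Myers, note that on a compact symmetric space of positive curvature $X$ lies in a compact centralizer, and invoke the detailed description of such Killing fields from \cite{BerNikTrans}—in effect, $X$ sits inside a torus in $\Isom(M)$ all of whose one-parameter subgroups through $X$ of constant-length type are circles. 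A secondary, more routine point is the passage from "Clifford-Wolf for small $t$" to "Clifford-Wolf for all $t$," which is the subgroup argument above and should present no real difficulty. Most of stage one can simply be attributed to \cite{BerNikTrans}; the write-up should make that attribution explicit and then concentrate the actual argument on the factorization statement.
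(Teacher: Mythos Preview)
The paper does not prove this theorem at all: it is quoted verbatim from the companion paper \cite{BerNikTrans} and used as a black box. So there is no ``paper's own proof'' to compare against; your proposal is effectively an attempt to reconstruct the argument of \cite{BerNikTrans}.

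Your sketch is broadly on the right track, but the subgroup argument in stage one has a gap. The set $\{t\in\mathbb{R}:\mu(t)\text{ is Clifford--Wolf}\}$ is \emph{not} obviously a subgroup: the composition of two Clifford--Wolf translations need not be Clifford--Wolf in general, and your parenthetical justification (``commuting with a transitive group'') presupposes exactly what you are trying to prove, namely that some transitive group commutes with the entire flow. The clean argument --- which is what \cite{BerNikTrans} actually does, and which the present paper alludes to in the proof of Proposition~\ref{ozon} --- is to show directly that the centralizer $Z_\mu$ of the \emph{whole} flow $\{\mu(t)\}$ in $\Isom(M)$ acts transitively on $M$; this uses the symmetric-space structure together with Ozols' results \cite{Ozols4}. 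Once $Z_\mu$ is transitive, every $\mu(t)$ commutes with a transitive group and is therefore Clifford--Wolf by the argument in Section~1. You should replace the subgroup step with this centralizer transitivity.

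Your stage two is essentially correct in outline. The freeness argument (a Clifford--Wolf translation with a fixed point is the identity) is clean and complete. For periodicity, you are right that positive curvature is essential and that Bonnet--Myers gives compactness; the closure of $\{\mu(t)\}$ in the compact group $\Isom(M)$ is then a torus, and the remaining issue is ruling out that this torus has dimension $\geq 2$. Your instinct to invoke the transitive centralizer again is sound: since $Z_\mu$ acts transitively and commutes with the full closure torus, all elements of that torus are Clifford--Wolf, and on a positively curved symmetric space (a CROSS) one can then argue that a torus of Clifford--Wolf translations must be one-dimensional. You correctly flag this as the delicate point, and deferring the details to \cite{BerNikTrans} is exactly what the present paper does.
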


From Theorems \ref{rcwhom} and \ref{Posit}, we get the following

\begin{theorem}\label{scwh}
If a  restrictively Clifford-Wolf homogeneous Riemannian manifold
$(M,\mu)$ is symmetric space, then it is strongly Clifford-Wolf
homogeneous.
\end{theorem}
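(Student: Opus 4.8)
The plan is to deduce this from the two cited results together with the structure theory already developed. The key observation is that Theorem~\ref{rcwhom} characterizes restrictive Clifford-Wolf homogeneity in terms of geodesics being integral curves of Killing fields of constant length, while Theorem~\ref{Posit} tells us that on a symmetric space \emph{every} Killing field of constant length generates a one-parameter group of Clifford-Wolf translations with good local behaviour. So the bridge between "restrictively CW-homogeneous" and "strongly CW-homogeneous" (Definition~\ref{SKWH}) is exactly the passage from a single geodesic through two given points to the global flow it is an integral curve of.

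First I would reduce to the compact case: by Theorem~\ref{product}, $(M,\mu)$ splits as $\mathbb{E}^n\times N$ with $N$ compact, restrictively CW-homogeneous and symmetric, and a product of strongly CW-homogeneous factors is strongly CW-homogeneous (the one-parameter groups multiply componentwise, and the Euclidean factor is handled by translations), so it suffices to treat $N$. Next, given two points $x,y\in N$, pick a minimizing geodesic $\gamma$ from $x$ to $y$, parametrized by arclength, so that $\gamma(0)=x$ and $\gamma(s)=y$ with $s=\rho(x,y)$. By Theorem~\ref{rcwhom} (using that $N$ is complete) $\gamma$ is an integral curve of some Killing vector field $X$ of constant length; rescaling, we may take $X$ of unit length, so that the flow $\gamma_X(t)$ satisfies $\gamma_X(t)(x)=\gamma(t)$ for all $t$, in particular $\gamma_X(s)(x)=y$.

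Now apply Theorem~\ref{Posit} to $X$: the flow $\mu(t):=\gamma_X(t)$ consists entirely of Clifford-Wolf translations of $N$. It remains only to check the displacement condition in Definition~\ref{SKWH}, namely that for sufficiently small $|t|$ the isometry $\mu(t)$ moves every point exactly distance $|t|$. Since $X$ has unit length, every integral curve of $X$ is a unit-speed geodesic (Lemma~\ref{tr1}), hence $\rho(p,\mu(t)(p))\le |t|$ for all $p$ and all $t$; and for $|t|$ smaller than the injectivity radius of $N$ (which is positive, $N$ being compact) this inequality is an equality because a unit-speed geodesic segment of length $<r_{\inj}$ is minimizing. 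Thus $\mu(t)$ shifts all points to distance $|t|$ for small $|t|$, and $\mu(s)(x)=y$ with $s=\rho(x,y)$, which is precisely strong Clifford-Wolf homogeneity.

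I do not expect a serious obstacle here, since the heavy lifting is done by Theorems~\ref{rcwhom} and~\ref{Posit}; the only points demanding a little care are the reduction through the de~Rham / Toponogov splitting (making sure the Euclidean and compact factors are each handled, and that strong CW-homogeneity is preserved under products) and the elementary verification that a unit-length Killing field gives a flow realizing the distance exactly for small times. The second half of Theorem~\ref{Posit} (the $S^1$-factorization under positive curvature) is not needed for this statement.
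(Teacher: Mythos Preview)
Your proof is correct and follows essentially the same route as the paper: pick a minimizing geodesic between $x$ and $y$, use Theorem~\ref{rcwhom} to realize it as an integral curve of a unit Killing field $X$, then invoke Theorem~\ref{Posit} to conclude that the flow of $X$ consists of Clifford-Wolf translations and verify the displacement condition of Definition~\ref{SKWH}. The only difference is that the paper works directly on $(M,\mu)$ without first splitting off the Euclidean factor; your reduction to the compact case is harmless but unnecessary, since once Theorem~\ref{Posit} guarantees that each $\mu(t)$ is a Clifford-Wolf translation, it suffices to check $\rho(p,\mu(t)(p))=|t|$ at a \emph{single} point $p$ (where the injectivity radius is automatically positive), and the displacement is then the same everywhere.
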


\begin{proof}
Let $x,y$ are arbitrary points in $(M,\mu)$. Take a shortest
geodesic $\gamma$ in $(M,g),$ joining points $x$ and $y$. Then by
Theorem \ref{rcwhom}, the geodesic $\gamma$ parameterized by the
arclength is an integral curve of a unit Killing vector field $X$ on
$(M,g)$. Now, by Theorem \ref{Posit}, the one-parameter isometry
group $\mu(t)$, $t \in \mathbb{R}$, of the space $M$, generated by
the field $X$, consists of Clifford-Wolf translations. It is clear
that for sufficiently small $|t|$, $\gamma(t)$ shifts all points of
$(X,d)$ to distance $|t|$, and $\gamma(s)(x)=y$, where $d(x,y)=s$.
\end{proof}

\section{The proof of the main result}

We shall need the following useful proposition.

\begin{pred}[Proposition 2.35 in \cite{Bes1}]\label{rcw1}
If the Levi-Civita derivative of the curvature tensor $R$ of a
Riemannian manifold $(M,g)$ satisfies the condition $(\nabla _X
R)(\cdot,X)X=0$ for any $X\in TM$, then $(M,g)$ is locally
symmetric.
\end{pred}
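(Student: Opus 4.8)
The aim is to deduce that $\nabla R\equiv 0$; by a classical theorem of E.~Cartan this is equivalent to $(M,g)$ being locally symmetric (and it is one of the standard definitions of that notion). So fix a point $p\in M$ and argue purely algebraically on $T_pM$, viewing $\nabla R$ as a tensor there. The hypothesis $(\nabla_X R)(\cdot,X)X=0$ is a homogeneous identity of degree $3$ in $X$: the vector $X$ occupies the differentiation slot of $\nabla R$ together with the second and third arguments of $R(\cdot,\cdot)\cdot$, while the remaining (first) argument is free.

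The first step is polarisation. Substituting $X\mapsto X_1+X_2+X_3$ and extracting the part that is multilinear in the three vectors yields
$$\sum_{\sigma\in S_{3}}(\nabla_{X_{\sigma(1)}}R)(Y,X_{\sigma(2)})X_{\sigma(3)}=0\qquad\text{for all }X_1,X_2,X_3,Y,$$
that is, the total symmetrisation of $\nabla R$ over its three ``$X$-slots'' vanishes. It is also useful to record that, by the pair symmetry together with the first Bianchi identity for $R$, the hypothesis is equivalent to $g((\nabla_X R)(Y,X)W,X)=0$ for all $X,Y,W$; polarising this second form produces a further family of symmetrised identities in which $X$ sits in the differentiation slot, the second argument, and the metric-pairing slot.

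The decisive step is to play these polarised identities against the Bianchi identities. The symmetrisation above has a large kernel and so cannot by itself force $\nabla R=0$; what makes the argument work is the second Bianchi identity $(\nabla_XR)(Y,Z)+(\nabla_YR)(Z,X)+(\nabla_ZR)(X,Y)=0$, which furnishes a \emph{cyclic} relation among the differentiation slot and the first two arguments of $R$, a different $S_3$-symmetry from the one annihilated by polarisation. Combining the two, and using the skew-symmetries of $R$, the first Bianchi identity and the pair symmetry to shuffle indices between slots, one isolates the unsymmetrised tensor: after a finite, if somewhat intricate, chain of substitutions one gets $(\nabla_XR)(Y,Z)W=0$ for all $X,Y,Z,W$ at $p$. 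As $p$ was arbitrary, $\nabla R\equiv 0$, hence $(M,g)$ is locally symmetric.

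The main obstacle is precisely this bookkeeping: the hypothesis only constrains the fully symmetric component of $\nabla R$, so recovering the rest genuinely needs the second Bianchi identity, and the computation has to interleave the two kinds of symmetry while tracking the signs produced by the antisymmetries of $R$. An equivalent and more geometric route to the polarised identities — the one to which the Jacobi-field remark after Theorem~\ref{symm1} points — is to note that the hypothesis says exactly that along every geodesic $\gamma$ the Jacobi operator $u\mapsto R(u,\dot\gamma)\dot\gamma$ is parallel; differentiating this statement as $\gamma$ varies leads to the same algebraic identities.
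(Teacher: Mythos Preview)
The paper does not prove this proposition at all: it is quoted verbatim as Proposition~2.35 from Besse's book \cite{Bes1} and used as a black box in the proof of Theorem~\ref{rcw2}. So there is nothing in the paper to compare your argument against.

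That said, your outline is the standard one (and essentially the one Besse gives): polarise the cubic-in-$X$ identity $(\nabla_X R)(\cdot,X)X=0$ to obtain the vanishing of the full symmetrisation over the three $X$-slots, then combine this with the second Bianchi identity --- a cyclic relation over a \emph{different} triple of slots --- together with the algebraic symmetries of $R$ to squeeze out $\nabla R=0$. Your diagnosis of the obstacle is accurate: polarisation alone annihilates only the symmetric component, and it is precisely the second Bianchi identity that supplies the missing information. The sketch is correct in spirit; the ``finite, if somewhat intricate, chain of substitutions'' is real but routine, and if you want to write it out in full, Besse's text is the reference to follow.
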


Now, using Theorem \ref{symm1} we can prove the following

\begin{theorem}\label{rcw2}
Every restrictively Clifford-Wolf homogeneous Riemannian manifold
$(M,g)$ is locally symmetric.
\end{theorem}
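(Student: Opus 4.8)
The plan is to reduce Theorem~\ref{rcw2} to the purely local criterion provided by Proposition~\ref{rcw1}: it suffices to verify that the Levi-Civita derivative of the curvature tensor satisfies $(\nabla_X R)(\cdot,X)X=0$ for every tangent vector $X\in TM$. Since $(M,g)$ is restrictively Clifford-Wolf homogeneous, by Lemma~\ref{first} it is homogeneous, hence complete, and in particular it has injectivity radius bounded below by a positive constant; this is exactly the hypothesis under which Proposition~\ref{tr4} applies.

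Next I would pass from an arbitrary tangent vector to a Killing field of constant length. Fix $x\in M$ and a nonzero $X_0\in M_x$, and let $\gamma$ be the geodesic through $x$ with $\gamma'(0)=X_0$. By Theorem~\ref{rcwhom} (whose hypotheses hold since $(M,g)$ is complete and restrictively Clifford-Wolf homogeneous), the geodesic $\gamma$ is an integral curve of some Killing vector field $Z$ of constant length on $(M,g)$; after rescaling we may assume $Z(x)=X_0$. Then Theorem~\ref{symm1} gives $(\nabla_Z R)(\cdot,Z)Z\equiv 0$ on all of $M$, and evaluating this identity at the point $x$ yields $(\nabla_{X_0} R)(\cdot,X_0)X_0=0$. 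Since $x$ and $X_0$ were arbitrary, the hypothesis of Proposition~\ref{rcw1} is satisfied at every point, so $(M,g)$ is locally symmetric.

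The only place requiring care is the claim that every geodesic is an integral curve of a Killing field of constant length — but this is precisely one direction of Theorem~\ref{rcwhom}, already established in the excerpt, so no new obstacle arises here; the real content was packaged into Theorem~\ref{symm1}. In short, the proof is essentially an assembly: Lemma~\ref{first} gives completeness, Theorem~\ref{rcwhom} presents geodesics as flow lines of constant-length Killing fields, Theorem~\ref{symm1} kills the relevant component of $\nabla R$ along each such field, and Proposition~\ref{rcw1} converts the resulting pointwise identity into local symmetry. I would expect the author's proof to be just a few lines citing exactly these four statements.
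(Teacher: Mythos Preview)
Your argument is correct and follows the paper's proof essentially verbatim: both invoke Theorem~\ref{rcwhom} to realize an arbitrary tangent vector as the value of a Killing field of constant length, apply Theorem~\ref{symm1} to obtain $(\nabla_Z R)(\cdot,Z)Z=0$, and conclude via Proposition~\ref{rcw1}. The only difference is cosmetic: your aside about the injectivity radius and Proposition~\ref{tr4} is unnecessary here (that ingredient is already absorbed into Theorem~\ref{rcwhom}), and the paper does not mention it at this stage.
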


\begin{proof}
According to Proposition \ref{rcw1}, in order to prove that $(M,g)$
is locally symmetric it suffices to prove $(\nabla_X R)(\cdot,X)X=0$
for any $X\in M_x$ at a fixed point $x\in M$. By Theorem
\ref{rcwhom} any geodesic $\gamma$ on $(M,g)$ is an integral curve
of a Killing vector field of constant length on $(M,g)$, hence, we
can find a Killing field of constant length $Z$ on $(M,g)$ such that
$Z(x)=X$. By Theorem \ref{symm1} we get $(\nabla_Z R)(\cdot,Z)Z=0$
at every point of $M$. In particular,
$$
(\nabla_X R)(\cdot,X)X=(\nabla_{Z(x)} R)(\cdot,Z(x))Z(x)=0.
$$
The theorem is proved.
\end{proof}

\begin{pred}\label{ozon}
Let $M$ be a simply connected compact irreducible symmetric space,
that is not isometric to a Lie group with a bi-invariant Riemannian
metric. If $M$ admits nontrivial Killing vector of constant length
$X$, then either $M$ is an odd-dimensional sphere $S^{2n-1}$ for
$n\geq 3$, or $M=SU(2n)/Sp(n)$, $n\geq 3$.
\end{pred}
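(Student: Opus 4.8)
The plan is to reduce the statement to a known classification of irreducible compact symmetric spaces admitting nontrivial Killing vector fields of constant length, and to extract the two surviving families by a careful analysis of what such a field forces on the curvature and isotropy data. First I would invoke the structure of a Killing vector field of constant length $X$ on the irreducible compact symmetric space $M = G/K$: by Theorem \ref{symm1}, $(\nabla_X R)(\cdot, X)X \equiv 0$, and moreover by Proposition \ref{criv2} we have the identity $g(\nabla_U X, \nabla_V X) = g(R(U,X)X, V)$, so the symmetric operator $V \mapsto R(V,X(p))X(p)$ at a point $p$ equals $A_X^{\ast} A_X = -A_X^2$, where $A_X = -\nabla X$ is skew-symmetric. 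In particular the Jacobi operator $R_{X(p)} = R(\cdot, X(p))X(p)$ is a nonnegative operator that is a perfect square of a skew-symmetric operator; this is a strong constraint on the root system of $M$, since on a symmetric space the eigenvalues of $R_{X(p)}$ are determined by the values of the roots on $X(p)$ (viewed in a maximal flat through $p$ in the direction $X(p)$).

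Next I would use the fact — available from the authors' earlier work \cite{BerNikTrans} and consistent with Theorem \ref{Posit} — that the one-parameter group generated by $X$ consists of Clifford-Wolf translations, hence lies in the centralizer description via Ozols' result cited in Section 1: the centralizer of each such translation acts transitively on $M$. Combined with Wolf's classification of irreducible compact simply connected symmetric spaces admitting one-parameter groups of Clifford-Wolf translations \cite{Wolf62} (quoted in Section 1: only odd-dimensional spheres, the spaces $SU(2m)/Sp(m)$ with $m \geq 2$, and simple compact Lie groups with bi-invariant metric), the classification is essentially forced. Since by hypothesis $M$ is not a Lie group with bi-invariant metric, $M$ must be $S^{2n-1}$ or $SU(2m)/Sp(m)$. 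The remaining task is to pin down the ranges of the parameters: excluding the low-dimensional coincidences where the space is in fact a Lie group. Here $S^1 = SO(2)$ and $S^3 = SU(2)$ are Lie groups with bi-invariant metric, so among spheres only $n \geq 3$ (i.e.\ dimension $\geq 5$) remains; and $SU(2)/Sp(1) \cong S^5$ already appears among spheres while $SU(4)/Sp(2) \cong S^5$ too, so one should check which of the small members $SU(2m)/Sp(m)$ coincide with a sphere or a group — the genuinely new entries start at $m = 3$, i.e.\ $n \geq 3$ in the statement's notation, which is why the proposition lists $SU(2n)/Sp(n)$, $n \geq 3$.

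The main obstacle I anticipate is not invoking Wolf's list — that is quoted ready-made — but rather justifying rigorously that any nontrivial Killing field of constant length on such an $M$ actually generates a one-parameter \emph{group} of Clifford-Wolf translations, so that Wolf's classification applies; this is exactly the content pulled in from \cite{BerNikTrans} via Theorem \ref{Posit}, and on a symmetric space it is automatic. A secondary bookkeeping obstacle is the treatment of the exceptional isomorphisms in low dimensions ($SU(2)/Sp(1) = S^5$? no: $SU(2)/Sp(1)$ is a point; $SU(4)/Sp(2) = S^5$; $Sp(1) = SU(2) = S^3$, $SO(2) = S^1$), which must be done to see precisely where each family "genuinely" begins and hence to get the bounds $n \geq 3$ right in both cases. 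Once those identifications are recorded, the proposition follows by simply reading off Wolf's list under the standing hypothesis that $M$ is irreducible, simply connected, compact symmetric, and not a bi-invariant Lie group.
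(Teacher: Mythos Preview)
Your proposal is correct and coincides with the short route the paper itself mentions first: once Theorem~\ref{Posit} (equivalently \cite{BerNikTrans}) guarantees that the flow of $X$ consists of Clifford--Wolf translations, Wolf's list in \cite{Wolf62} immediately leaves only odd-dimensional spheres and $SU(2m)/Sp(m)$ after excluding Lie groups, and the bounds $n\geq 3$ come from $S^1=SO(2)$, $S^3=SU(2)$, and $SU(4)/Sp(2)\cong S^5$. The paper, however, writes out a different argument in detail: from the transitivity of the centralizer of the flow (Ozols, via \cite{BerNikTrans}) it obtains a connected subgroup $K\subset G$ with non-discrete center acting transitively on $M=G/H$ with $G$ simple, and then applies Onishchik's classification (Theorem~4.1 in \cite{Oni1}) of such transitive subgroups, which directly outputs the two families with the bound $n\geq 3$ already built in. Your route treats \cite{Wolf62} as a black box, while the paper's route replaces that black box by a transitivity statement plus Onishchik's purely Lie-theoretic list; the latter is arguably more transparent about \emph{why} these two families arise (they are exactly the irreducible $G/H$ admitting a transitive subgroup with nontrivial connected center).

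Two minor clean-ups: your opening paragraph on $(\nabla_X R)(\cdot,X)X$ and the Jacobi operator is not used in either argument and can be deleted; and in your second paragraph the claim ``$SU(2)/Sp(1)\cong S^5$'' is a slip (it is a point, as you note later) --- the only low-dimensional identification you actually need is $SU(4)/Sp(2)\cong S^5$, together with $S^1$ and $S^3$ being Lie groups.
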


\begin{proof}
In fact the assertion of the theorem follows from results of the
paper \cite{Wolf62}, where it is proved that among irreducible
compact simply connected symmetric spaces only odd-dimensional
spheres, spaces $SU(2m)/Sp(m)$, $m\geq 2$, and simple compact Lie
groups, supplied with some bi-invariant Riemannian metrics, admit
one-parameter groups of Clifford-Wolf translations. Here we give a
more direct proof.

Let $G$ be the identity component of the full isometry group of $M$.
Consider a one-parameter isometry group $\mu(t)$, $t\in \mathbb{R}$,
generated by $X$ (in fact, this group consists of Clifford-Wolf
translations of $M$ \cite{BerNikTrans}). By Lemma 1 in
\cite{BerNikTrans} $Z_{\mu}$, the centralizer of the flow $\mu$ in
$G$, acts transitively on $M$ (note that this result is based on
some results of V.~Ozols \cite{Ozols4}). It is clear that the identity
component $K=K(Z_{\mu})$ of $Z_{\mu}$ is a Lie group, which acts
transitively on $M$ and has a non-discrete center (this center
contains $\mu(t)$, $t\in \mathbb{R}$).

Note that $M=G/H$ is a homogeneous space, where $H$ is the isotropy
subgroup at some point $x\in M$. Moreover, according to assumptions
of the theorem, $G$ and $H$ are connected (recall that $M$ is simply
connected) and $G$ is a simple compact Lie group.

In Theorem 4.1 of the paper \cite{Oni1}, A.L.~Onishchik  classified
all connected proper subgroups $K$ of the group $G$, that act
transitively on the homogeneous space $G/H$, where $G$ is a simple
compact connected Lie group and $H$ is its connected closed
subgroup. If in this situation $K$ has a non-discrete center, then
this theorem implies that either $G/H$ is an odd-dimensional sphere,
or $G/H=SU(2n)/Sp(n)$, $n\geq 3$. Since the center of $K(Z_{\mu})$
is non-discrete, it proves the proposition.
\end{proof}

\begin{pred}\label{ozon1}
Let $M$ be a symmetric space $SU(2n)/Sp(n)$, where $n\geq 3$. Then
every Killing vector field of constant length on $M$ has the form
$\Ad(s)(tU)$, where $s\in SU(2n)$, $t \in \mathbb{R}$,
$U=\sqrt{-1}\diag(1,1,\dots,1,-(2n-1))\in su(2n)$. Moreover, $M$ is
not restrictively Clifford-Wolf homogeneous.
\end{pred}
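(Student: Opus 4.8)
The plan is to analyze Killing vector fields of constant length on $M=SU(2n)/Sp(n)$ by transferring the problem to the Lie algebra. Recall that $M$ is an irreducible simply connected compact symmetric space with isometry group $G=SU(2n)$ (identity component), so that Killing vector fields on $M$ correspond to elements of the Lie algebra $\mathfrak{g}=su(2n)$. For $W\in su(2n)$, the induced Killing field $X_W$ on $M=G/H$ has length at the point $gH$ determined by the norm of the $\mathfrak{m}$-component of $\Ad(g^{-1})W$, where $\mathfrak{g}=\mathfrak{h}\oplus\mathfrak{m}$ is the Cartan decomposition with $\mathfrak{h}=sp(n)$. Thus $X_W$ has constant length if and only if the function $gH\mapsto \|\mathrm{pr}_{\mathfrak{m}}(\Ad(g^{-1})W)\|$ is constant on $M$, equivalently (since $\|\Ad(g^{-1})W\|$ is already constant by bi-invariance) the function $gH\mapsto\|\mathrm{pr}_{\mathfrak{h}}(\Ad(g^{-1})W)\|$ is constant.

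First I would reduce to a normal form: every $W\in su(2n)$ is conjugate under $SU(2n)$ to a diagonal purely imaginary matrix $\sqrt{-1}\,\diag(\lambda_1,\dots,\lambda_{2n})$ with $\sum\lambda_j=0$, so after replacing $W$ by $\Ad(s)W$ we may assume $W$ is such a diagonal matrix. The key computation is then to evaluate how $\|\mathrm{pr}_{\mathfrak{h}}(\Ad(k^{-1})W)\|$ varies as $k$ ranges over $G$; it suffices to check that the constancy forces all but (essentially) one eigenvalue pattern. Here I would exploit the structure of $\mathfrak{h}=sp(n)\subset su(2n)$ as the fixed-point set of the involution $\theta(A)=J\bar A J^{-1}$ (conjugation by the standard symplectic form $J$), so that $\mathrm{pr}_{\mathfrak{h}}(A)=\tfrac12(A+\theta(A))$ and $\|\mathrm{pr}_{\mathfrak{h}}(\Ad(k^{-1})W)\|^2=\tfrac14\big(2\|W\|^2+2\,\langle\Ad(k^{-1})W,\theta(\Ad(k^{-1})W)\rangle\big)$. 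Constancy of this over all $k$ is equivalent to constancy of $\langle \Ad(k^{-1})W, \theta\Ad(k^{-1})W\rangle$, i.e. $\trace\big((\Ad(k^{-1})W)\cdot \theta(\Ad(k^{-1})W)\big)$ being independent of $k$. Writing this out with $W$ diagonal, one gets a polynomial expression in the entries of $k$ (or of $\Ad(k^{-1})W$) that must be constant on the group; comparing, say, the value at $k=e$ with values at other points, one extracts the constraint that the eigenvalue multiset $\{\lambda_j\}$ must be invariant under the pairing induced by $J$ in a way that pins down $W$ up to scale and conjugation to be $tU$ with $U=\sqrt{-1}\diag(1,\dots,1,-(2n-1))$. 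I expect this algebraic extraction — showing no other eigenvalue configuration yields a constant — to be the main obstacle, and it is where the hypothesis $n\geq 3$ and the specific root/weight combinatorics of $SU(2n)/Sp(n)$ enter; one likely evaluates the trace form along a suitably chosen one-parameter subgroup transverse to the centralizer of $W$ and shows the derivative is nonzero unless $W$ has the claimed form.

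For the final assertion, that $M$ is not restrictively Clifford-Wolf homogeneous, I would argue by contradiction using Theorem \ref{rcwhom}: if $M$ were restrictively Clifford-Wolf homogeneous, then since $M$ is symmetric, through every point and in every direction there would be a Killing vector field of constant length tangent to the corresponding geodesic. But by the normal form just established, the set of values $X_W(x)$ as $W$ ranges over all constant-length Killing fields and $x$ over $M$ is contained in the single $\Ad(SU(2n))$-orbit (up to scaling) of the image of $U$ in $\mathfrak{m}$; this orbit is a proper subset of the unit sphere in $\mathfrak{m}=M_x$ because the isotropy representation of $Sp(n)$ on $\mathfrak{m}$ is not transitive on directions (its orbits are parametrized by more than one invariant, so not every tangent direction lies in the $H$-orbit of the direction determined by $U$). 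Hence some geodesic through $x$ fails to be an integral curve of a constant-length Killing field, contradicting Theorem \ref{rcwhom}. I would close by noting that this also follows directly once the classification of constant-length Killing fields is in hand: the dimension of the space of such fields through a fixed point is too small to cover all geodesic directions, so Theorem \ref{rcwhom} is violated.
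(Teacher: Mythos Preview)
Your plan is coherent but takes a different route from the paper, and the step you yourself flag as ``the main obstacle'' is left genuinely open.

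The paper does not analyze the length function $g\mapsto\|\mathrm{pr}_{\mathfrak{h}}(\Ad(g^{-1})W)\|$ directly at all. Instead it invokes a structural fact (used already in the proof of Proposition~\ref{ozon}, via Ozols's results as quoted in \cite{BerNikTrans}): for any Killing field of constant length on a compact symmetric space, the identity component of the centralizer of its flow in $G$ acts transitively on $M$ and has non-discrete center. For $M=SU(2n)/Sp(n)$ with $n\geq 3$, Onishchik's classification (Theorem~4.1 in \cite{Oni1}) asserts that the \emph{only} connected subgroup of $SU(2n)$ with non-discrete center acting transitively on $M$ is (up to conjugation) $SU(2n-1)\times S^1$ with $S^1=\exp(tU)$. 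Hence any constant-length Killing field must lie in the center of the Lie algebra of this subgroup, forcing $W=tU$ up to $\Ad(SU(2n))$. This bypasses entirely the eigenvalue extraction you propose, and it is precisely here that the hypothesis $n\geq 3$ enters: it comes from Onishchik's list, not from any curvature or trace computation.

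Your setup via the involution $\theta$ and the trace pairing is correct, but the claim that one can ``extract the constraint'' on the eigenvalues by differentiating along a one-parameter subgroup is not substantiated; this is a real computation you have not done, and it is not obvious a priori that a single well-chosen direction suffices. For the second assertion your closing dimension count is exactly the paper's argument: the set of constant-length Killing fields is the cone over the adjoint orbit $SU(2n)/S(U(2n-1)\cdot U(1))$, of total dimension $(4n-2)+1$, which is strictly less than $\dim M=(n-1)(2n+1)$ for $n\geq 3$, so evaluation at a point cannot cover $M_x$ and Theorem~\ref{rcwhom} fails. Your earlier remark that the isotropy representation of $Sp(n)$ on $\mathfrak{m}$ is ``not transitive on directions'' is true but is not what is needed here; the relevant obstruction is the dimension of the family of constant-length Killing fields, not the size of a single isotropy orbit.
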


\begin{proof}
According to Theorem 4.1 in \cite{Oni1}, there is a unique (up to
conjugation in $SU(2n)$) connected subgroup with non-discrete center
$K\subset SU(2n)$ that acts transitively on the homogeneous space
$M=SU(2n)/Sp(n)$ ($n\geq 3$). This is the group $SU(2n-1)\times
S^1$, where $SU(2n-1)$ is embedded in $SU(2n)$ via $A\rightarrow
\diag(A,1)$ and $S^1=\exp(tU)$, $t \in \mathbb{R}$,
$U=\sqrt{-1}\diag(1,1,\dots,1,-(2n-1))\in su(2n)$.

If $SU(2n-1)\times S^1$ is the centralizer of a Killing field $V\in
su(2n)$, then clearly $V=tU$ for a suitable $t \in \mathbb{R}$. It
is clear also that any such Killing field has constant length on
$M$, since it lies in the center of the Lie algebra of the group
acting transitively on $M$. This proves the first assertion of the
proposition.

Note that $\dim(SU(2n)/Sp(n))=(n-1)(2n+1)$. On the other hand, we
can easily calculate the dimension of the set of Killing fields of
constant length on $M$. Indeed, this set is $\Ad(SU(2n))(tU)$ (the
orbit of $tU\in su(2n)$ under the adjoin action of the group
$SU(2n)$), $t\in \mathbb{R}$. For any fixed $t$ this orbit is
$SU(2n)/S(U(2n-1)\cdot U(1))$, and $\dim(SU(2n)/S(U(2n-1)\cdot
U(1)))=4n-2$. Since $(4n-2)+1 < (n-1)(2n+1)=\dim(M)$ for $n\geq 3$,
then $M$ is not restrictively Clifford-Wolf homogeneous. Otherwise,
by Theorem \ref{rcwhom} for any $x\in M$ and $U\in M_x$ there is a
Killing vector field of constant length $X$ on $M$ such that
$X(x)=U$, which is impossible by previous calculation of dimensions.
\end{proof}

Now we can prove the main result of the paper.

\begin{proof}[Proof of Theorem \ref{main}]
Let $M$ be a simply connected Clifford-Wolf homogeneous Riemannian
manifold. By Lemma \ref{first} $M$ is complete.
Therefore, by Theorem \ref{rcw2} $M$ is symmetric space. Let us
consider the de~Rham decomposition
$$
M=M_{0}\times M_{1}\times \dots \times M_{k},
$$
where $M_0$ is Euclidean space, and the others $M_i$, $1\leq i \leq
k$, are simply connected compact irreducible symmetric spaces. By
Theorem \ref{product2}, every $M_i$, $0 \leq i \leq k$, is
Clifford-Wolf homogeneous.

From Propositions \ref{ozon} and \ref{ozon1} we get that every
simply connected compact irreducible symmetric space that is
Clifford-Wolf homogeneous is either an odd-dimensional sphere of
constant curvature, or a simply connected compact simple Lie group
supplied with a bi-invariant Riemannian metric. This proves the necessity.

On the other hand, a direct metric
product of an Euclidean space, odd-dimensional spheres of constant
curvature and simply connected compact simple Lie groups supplied
with bi-invariant Riemannian metrics (all these manifolds are Clifford-Wolf homogeneous)
is a Clifford-Wolf homogeneous Riemannian manifold.
\end{proof}

\section{Clifford-Killing spaces}

The following proposition is evident.

\begin{pred}\label{CKvs}
A collection $\{X_1, \dots, X_l\}$ of Killing vector fields on a
Riemannian manifold $(M,g)$ constitutes a basis of a
finite-dimensional vector space $CK_l$ (over $\mathbb{R}$) of
Killing vector fields of constant length if and only if vector
fields $X_1, \dots, X_l$ are linearly independent and all inner
products $g(X_i,X_j)$; $i,j=1,\dots, l$ are constant. Under this
$CK_l$ admits an orthonormal basis of (unit) Killing vector fields.
\end{pred}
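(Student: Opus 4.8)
The plan is to prove the equivalence by unwinding what it means for a finite-dimensional real vector space of Killing fields to consist entirely of fields of constant length. Recall (Lemma~\ref{tr1}) that a Killing field $X$ has constant length on the connected manifold $M$ if and only if $g(X,X)$ is constant; and that the span of Killing fields is again a space of Killing fields, since the Killing equation is linear. So the statement reduces to a linear-algebra observation about the quadratic form $X \mapsto g(X,X)$ restricted to the subspace $CK_l = \Lin\{X_1,\dots,X_l\}$.

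First I would check the nontrivial direction. Suppose every element of $CK_l$ has constant length. Then in particular each $X_i$ has constant length, and so does $X_i + X_j$ for all $i,j$. Expanding $g(X_i+X_j, X_i+X_j) = g(X_i,X_i) + 2g(X_i,X_j) + g(X_j,X_j)$ and using that the outer two terms are (pointwise) constant, we conclude that $g(X_i,X_j)$ is constant on $M$ for all $i,j$. Conversely, if $X_1,\dots,X_l$ are linearly independent Killing fields with all $g(X_i,X_j)$ constant, then for an arbitrary $X = \sum_i c_i X_i \in CK_l$ we get $g(X,X) = \sum_{i,j} c_i c_j\, g(X_i,X_j)$, a finite $\mathbb{R}$-linear combination of constants, hence constant; thus $X$ is a Killing field of constant length by Lemma~\ref{tr1}. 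Linear independence of the $X_i$ is exactly what makes $\{X_1,\dots,X_l\}$ a basis of an $l$-dimensional space, so the two conditions match up.

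For the final sentence, I would observe that the symmetric bilinear form $b(X,Y) := g(X,Y)$ (evaluated at any point — it is the same at every point by the constancy just established) is positive definite on $CK_l$, because $b(X,X) = g(X,X) > 0$ for $X \neq 0$ (a nonzero Killing field cannot vanish identically, and being of constant length it is nowhere zero). Applying the Gram--Schmidt process to any basis with respect to $b$ produces an orthonormal basis $\{Y_1,\dots,Y_l\}$ of $CK_l$; each $Y_i$ is then a unit Killing vector field, and $g(Y_i,Y_j) = \delta_{ij}$ identically on $M$.

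I do not expect any serious obstacle: the proposition is essentially a bookkeeping statement combining the linearity of the Killing equation with the polarization identity, together with Lemma~\ref{tr1} to translate "constant inner product" into "constant length." The only point requiring a word of care is that the inner products $g(X_i,X_j)$, being functions on $M$, must be shown constant \emph{simultaneously at all points}, which is precisely what the polarization computation above delivers; once that is in hand, the positive-definiteness of $b$ and hence the existence of an orthonormal basis are immediate.
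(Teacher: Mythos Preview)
Your argument is correct and is precisely the natural one: polarization to pass between constancy of all $g(X_i,X_j)$ and constancy of $g(X,X)$ for every $X$ in the span, followed by Gram--Schmidt with respect to the resulting positive-definite form. The paper itself does not give a proof; it simply declares the proposition ``evident,'' so there is nothing further to compare.
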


We shall call such a space $CK_l$, $l\geq 1$, a
\textit{Clifford-Killing space} or simply \textit{CK-space}. Below
we give a simple method to check the condition $g(X,Y)=\const$ for
given Killing vector fields $X,Y$.

\begin{lemma}\label{nabla}
Suppose  $X$ and $Y$ are Killing fields on a Riemannian manifold
$(M,g)$. Then a point $x\in M$ is a critical point of the function
$x\mapsto g_x(X,Y)$ if and only if
$$
\nabla_X Y =-\nabla_Y X=\frac{1}{2}[X,Y]
$$
at this point.
\end{lemma}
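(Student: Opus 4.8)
The plan is to compute the derivative of the function $f(p) = g_p(X,Y)$ in an arbitrary direction $V$ and identify when it vanishes for all $V$. For any vector field $V$ on $M$ we have
$$
V \cdot g(X,Y) = g(\nabla_V X, Y) + g(X, \nabla_V Y).
$$
So $x$ is a critical point of $f$ precisely when $g(\nabla_V X, Y) + g(X, \nabla_V Y) = 0$ for every tangent vector $V$ at $x$. The first reduction I would make is to rewrite each term using skew-symmetry of the operators $A_X$ and $A_Y$ (Assertion 1) of Lemma \ref{Kost}): $g(\nabla_V X, Y) = -g(A_X V, Y) = g(V, A_X Y) = -g(V, \nabla_Y X)$, and similarly $g(X, \nabla_V Y) = -g(A_Y X, V) = -g(V, \nabla_X Y)$. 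Hence the criticality condition becomes
$$
g\bigl(V,\, \nabla_X Y + \nabla_Y X\bigr) = 0 \quad \text{for all } V,
$$
i.e. $\nabla_X Y + \nabla_Y X = 0$ at $x$, which is exactly the symmetric part of the condition we want.

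The second step is to feed in the torsion-free identity $\nabla_X Y - \nabla_Y X = [X,Y]$, which holds everywhere. Combining $\nabla_X Y + \nabla_Y X = 0$ with $\nabla_X Y - \nabla_Y X = [X,Y]$ gives immediately $\nabla_X Y = \tfrac{1}{2}[X,Y]$ and $\nabla_Y X = -\tfrac{1}{2}[X,Y]$ at $x$. Conversely, if $\nabla_X Y = -\nabla_Y X$ at $x$, then reading the displayed computation backwards shows $V\cdot g(X,Y) = 0$ for all $V$ at $x$, so $x$ is a critical point. This gives the equivalence in both directions.

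I do not anticipate a genuine obstacle here; the statement is essentially a repackaging of the Killing equation plus torsion-freeness. The one point requiring a little care is the handling of the two terms $g(\nabla_V X, Y)$ and $g(X, \nabla_V Y)$ to make sure the symmetric combination of covariant derivatives appears cleanly — one must use the skew-symmetry of $A_X$ (for a Killing field) rather than anything about $\nabla_V$, and keep track that the resulting vector paired against $V$ is $\nabla_X Y + \nabla_Y X$ and not some other combination. Once that bookkeeping is done, the nondegeneracy of $g$ finishes the ``only if'' direction and the direct substitution finishes the ``if'' direction.
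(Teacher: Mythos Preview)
Your argument is correct and essentially identical to the paper's: both compute $V\cdot g(X,Y)$, use the Killing skew-symmetry $g(\nabla_V X,Y)=-g(V,\nabla_Y X)$ (and the analogous identity with $Y$) to reduce criticality to $\nabla_X Y+\nabla_Y X=0$, and then invoke torsion-freeness. (There is a harmless sign slip in your intermediate expression ``$g(X,\nabla_V Y)=-g(A_Y X,V)$''---it should be $+g(A_Y X,V)$---but your final identity $g(X,\nabla_V Y)=-g(V,\nabla_X Y)$ is correct.)
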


\begin{proof}
Since $\nabla_X Y -\nabla_Y X=[X,Y]$, it suffices to prove that $x$
is a critical point of $g(X,Y)$ if and only if $\nabla_X Y +\nabla_Y
X=0$ at the point $x$. For any Killing vector field $W$ and
arbitrary vector fields $U$ and $V$ on $(M,g)$ we have the equality
$g(\nabla_UW,V)+g(U,\nabla_V W)=0$. Since $X$ and $Y$ are Killing
vector fields, for any vector field $Z$ we get
$$
0=Z \cdot g(X,Y)=g(\nabla_Z X,Y)+g(X,\nabla_Z Y)=
$$
$$
-g(Z,\nabla_YX)-g(\nabla_X Y,Z)=-g(Z,\nabla_YX+\nabla_X Y),
$$
that proves the lemma.
\end{proof}

\begin{corollary}\label{nabla1}
Suppose  $X$ and $Y$ are Killing vector fields on a Riemannian
manifold $(M,g)$. Then $g(X,Y)=\const$ if and only if
$$
\nabla_X Y =-\nabla_Y X=\frac{1}{2}[X,Y].
$$
In particular, a Killing vector field $X$ has constant length if and
only if $\nabla_X X=0$.
\end{corollary}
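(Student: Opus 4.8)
The plan is to deduce this directly from Lemma \ref{nabla}, using only the standing convention (fixed in Section 1) that a Riemannian manifold is connected. Recall that a smooth function $f$ on a connected manifold is constant if and only if $df \equiv 0$, i.e. if and only if every point is a critical point of $f$. Applying this to $f(x) = g_x(X,Y)$, the condition $g(X,Y) = \const$ is equivalent to the statement that every $x \in M$ is a critical point of $x \mapsto g_x(X,Y)$. By Lemma \ref{nabla}, this holds precisely when $\nabla_X Y = -\nabla_Y X = \frac{1}{2}[X,Y]$ at every point of $M$, which is the asserted identity. (As in the proof of Lemma \ref{nabla}, since $\nabla_X Y - \nabla_Y X = [X,Y]$ always holds, the two displayed equalities together amount to the single equation $\nabla_X Y + \nabla_Y X = 0$, so the content is really $g(X,Y) = \const \iff \nabla_X Y + \nabla_Y X = 0$.)

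For the final assertion I would simply specialize to $Y = X$. Then $[X,X] = 0$, and the equivalence just obtained reads: $g(X,X) = \const$ if and only if $\nabla_X X = -\nabla_X X = 0$, that is, if and only if $\nabla_X X = 0$. Since a Killing vector field has constant length exactly when $g(X,X) = \const$, this proves the claim; one may also note that this last equivalence is precisely the equivalence of conditions 1) and 2) in Lemma \ref{tr1}.

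There is essentially no obstacle here: the only point worth recording explicitly is the use of connectedness of $M$ to pass from "$d\big(g(X,Y)\big) \equiv 0$" to "$g(X,Y) \equiv \const$", which is part of the running conventions, so the corollary is genuinely immediate from Lemma \ref{nabla}.
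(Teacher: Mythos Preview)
Your proof is correct and matches the paper's approach: the paper states this as an immediate corollary of Lemma~\ref{nabla} with no separate proof, and you have simply made explicit the one-line deduction (connectedness of $M$ turns ``every point is critical'' into ``constant'') together with the specialization $Y=X$.
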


\begin{definition}
Let $V$ and $W$ be some CK-spaces on $(M,g)$. We say that $V$ is
(properly) equivalent to $W$ if there exists a (preserving
orientation) isometry $f$ of $(M,g)$ onto itself such that
$df(V)=W.$
\end{definition}

The following question is quite interesting.

\begin{question}\label{quest}
Classify all homogeneous Riemannian manifolds which admit nontrivial
CK-spaces. For any such manifold, classify up to (proper)
equivalence all (in particular all maximal by inclusion) possible
CK-spaces.
\end{question}

This difficult question mainly has not been considered before. All
consequent sections are related to this question. We shall see
that it is closely connected with some impressive classical and
recent results.

\section{Riemannian manifolds with the Killing property}

In the paper \cite{DatN}, J.E.~D'Atri and H.K.~Nickerson studied
Riemannian manifolds with {\it the Killing property}.

\begin{definition}[\cite{DatN}]\label{kilpr}
A Riemannian manifold $(M,g)$ is said to have the Killing property
if, in some neighborhood of each point of $M$, there exists an
orthonormal frame $\{X_1,\dots,X_n\}$ such that each $X_i$, $i =
1,\dots , n$, is a Killing vector field (local infinitesimal
isometry). Such a frame is called a Killing frame.
\end{definition}

It is easy to see that every Lie group supplied with a bi-invariant
Riemannian metric has the (global) Killing property. Note that a
generalization of the Killing property is {\it the divergence
property}, that we shall not treat in this paper (see details in
\cite{DatN}).

\begin{pred}\label{ksym}
Every Riemannian manifold with the Killing property is locally
symmetric.
\end{pred}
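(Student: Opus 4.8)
The statement to prove is that every Riemannian manifold with the Killing property is locally symmetric. The plan is to reduce this to Theorem \ref{symm1} (or, more precisely, to Proposition \ref{rcw1}) by exhibiting, at each point $x\in M$ and for each tangent vector $X\in M_x$, a Killing vector field of constant length $Z$ defined near $x$ with $Z(x)=X$. Once such a field exists, Theorem \ref{symm1} gives $(\nabla_Z R)(\cdot,Z)Z\equiv 0$ in a neighborhood of $x$, and evaluating at $x$ yields $(\nabla_X R)(\cdot,X)X=0$; since $x$ and $X$ are arbitrary, Proposition \ref{rcw1} then forces $(M,g)$ to be locally symmetric. (Note that Theorem \ref{symm1} and Proposition \ref{rcw1} are both local in nature, so they apply verbatim to locally defined Killing fields on a neighborhood.)

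So the crux is the following: given a Killing frame $\{X_1,\dots,X_n\}$ on a neighborhood $U$ of $x$, and given an arbitrary $X\in M_x$, produce a \emph{constant-length} local Killing field realizing $X$ at $x$. The naive attempt $Z=\sum_i a_i X_i$ with constant coefficients $a_i$ chosen so that $Z(x)=X$ need not have constant length, because the frame, though orthonormal, is a frame of Killing fields rather than, say, left-invariant fields on a group; the functions $g(X_i,X_j)$ are only $\delta_{ij}$ identically (that is what "orthonormal frame" means here), so in fact $g(Z,Z)=\sum_i a_i^2$ \emph{is} constant. Thus the real content is simply to observe that an orthonormal Killing frame consists of pairwise-orthogonal unit Killing fields with \emph{constant} (namely constant $=\delta_{ij}$) inner products, so by Proposition \ref{CKvs} (or Corollary \ref{nabla1}) the span $CK_n=\Lin\{X_1,\dots,X_n\}$ is an $n$-dimensional CK-space, and \emph{every} element of it has constant length. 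Choosing $Z\in CK_n$ with $Z(x)=X$ (possible since $\{X_i(x)\}$ is a basis of $M_x$) completes the construction.

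Assembling these pieces: fix $x\in M$, take a Killing frame on a neighborhood $U\ni x$, note $\Lin\{X_1,\dots,X_n\}$ is a CK-space by Proposition \ref{CKvs}, so for any $X\in M_x$ the unique $Z=\sum_i a_i X_i$ with $\sum_i a_i X_i(x)=X$ is a Killing field of constant length on $U$. Apply Theorem \ref{symm1} to $Z$ on $U$ to get $(\nabla_Z R)(\cdot,Z)Z=0$ on $U$, in particular $(\nabla_X R)(\cdot,X)X=0$. As $x\in M$ and $X\in M_x$ were arbitrary, Proposition \ref{rcw1} applies and $(M,g)$ is locally symmetric.

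I do not expect any genuine obstacle here; the only point requiring a moment's care is the bookkeeping that "orthonormal frame of Killing fields" already delivers the constancy of all inner products $g(X_i,X_j)\equiv\delta_{ij}$, which is exactly the hypothesis of Proposition \ref{CKvs}, and that Theorem \ref{symm1} was proved for Killing fields of constant length with no assumption of completeness or of the field being globally defined, so it is legitimate to invoke it on the neighborhood $U$. Everything else is a direct citation chain through Theorem \ref{symm1} and Proposition \ref{rcw1}.
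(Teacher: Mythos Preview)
Your proof is correct and follows essentially the same approach as the paper: both observe that the orthonormal Killing frame gives $g(X_i,X_j)\equiv\delta_{ij}$, so any real linear combination is a local Killing field of constant length realizing a prescribed tangent vector at $x$, and then invoke (the local version of) Theorem~\ref{symm1} followed by Proposition~\ref{rcw1}. The paper likewise notes explicitly that the proof of Theorem~\ref{symm1} does not require the Killing field to be globally defined, which is the one subtlety you also flagged.
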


\begin{proof}
By definition, for any point $x$ in a given manifold $(M,g)$ there
is a Killing frame $\{X_1,\dots,X_n\}$ in some neighborhood $U$ of
the point $x$. Since $g(X_i,X_j)=\delta_{ij}$, then for any real
constants $a_i$, the local vector field $a_1X_1+a_2X_2+\cdots + a_n
X_n$ is a local Killing field of constant length. As a corollary,
for any vector $v\in M_x$ there is a Killing field $Z$ of constant
length in $U$ such that $Z(x)=v.$ Then, by the proof of Theorem
\ref{symm1}, which really doesn't require a global character of
vector fields, we get that $(\nabla_Z R)(\cdot, Z)Z(x)=0$ or
$(\nabla_v R)(\cdot, v)v=0$. By Proposition \ref{rcw1}, $(M,g)$ is
locally symmetric.
\end{proof}

\begin{remark}
Another proof of the previous proposition is given in \cite{DatN}.
\end{remark}

It is well known that every (not necessarily complete) locally
symmetric Riemannian manifold is locally isometric to a symmetric
space (see e.g. \cite{Hel}). Therefore, local properties of
manifolds with the Killing property could be obtained from the study
of complete (simply connected) Riemanian manifolds.

\begin{pred}\label{kcw}
Every simply connected complete Riemannian manifold $(M,g)$ with the
Killing property is symmetric and strongly Clifford-Wolf
homogeneous.
\end{pred}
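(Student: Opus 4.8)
The plan is to feed Proposition \ref{ksym} into the structure theory of Sections 3--5. First, by Proposition \ref{ksym} the manifold $(M,g)$ is locally symmetric; since it is moreover complete and simply connected, it is a globally symmetric Riemannian space (the classical fact that a complete simply connected locally symmetric space is symmetric, see e.g.\ \cite{Hel}). In particular $M$ is real-analytic, with real-analytic metric and hence real-analytic Killing fields. This already gives the first assertion, and by Theorem \ref{scwh} it now suffices to prove that $M$ is restrictively Clifford-Wolf homogeneous.

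By Theorem \ref{rcwhom}, since $M$ is complete, for this it is enough to exhibit, for each geodesic $\gamma$ of $(M,g)$, a globally defined Killing vector field of constant length having $\gamma$ as an integral curve. Fix $\gamma$, put $x=\gamma(0)$ and $v=\dot\gamma(0)$. Exactly as in the proof of Proposition \ref{ksym}, a Killing frame $\{X_1,\dots,X_n\}$ in a neighborhood $U$ of $x$ yields, through a constant-coefficient linear combination, a local Killing field $Z$ on $U$ with $Z(x)=v$ and $g(Z,Z)\equiv |v|^2$ on $U$; by Lemma \ref{tr1} its integral curve through $x$ is a geodesic with initial velocity $v$, so it coincides with $\gamma$ near $x$.

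It remains to globalize $Z$. Here I would invoke Nomizu's theorem on the extension of local Killing vector fields on a simply connected real-analytic Riemannian manifold: $M$ being symmetric, hence real-analytic and simply connected, $Z$ extends to a Killing field $\widetilde Z$ on all of $M$. The function $g(\widetilde Z,\widetilde Z)$ is real-analytic and constant on the nonempty open set $U$, hence constant on the connected manifold $M$, so $\widetilde Z$ has constant length (cf.\ Corollary \ref{nabla1}). By Lemma \ref{tr1} the integral curve of $\widetilde Z$ through $x$ is a geodesic, and it has initial velocity $v$, so by completeness it is exactly $\gamma$. Thus every geodesic of $(M,g)$ is an integral curve of a Killing field of constant length, so $M$ is restrictively Clifford-Wolf homogeneous by Theorem \ref{rcwhom}, and then strongly Clifford-Wolf homogeneous by Theorem \ref{scwh}. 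The one genuinely nontrivial point is this passage from local to global: the Killing property is by definition local, whereas Theorems \ref{rcwhom} and \ref{scwh} require honest Killing fields on all of $M$; the bridge is the analyticity of symmetric spaces together with Nomizu's extension theorem (alternatively, one may use that on a simply connected symmetric space a Killing field is determined by its value and covariant differential at a single point, and every such $1$-jet integrates to a global Killing field). Everything else is bookkeeping with results already established.
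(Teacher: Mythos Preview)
Your proof is correct and follows essentially the same route as the paper: Proposition~\ref{ksym} plus completeness and simple connectivity give a symmetric (hence real-analytic) space, Nomizu's extension theorem globalizes the local Killing fields, analyticity forces the length to remain constant, and then Theorems~\ref{rcwhom} and~\ref{scwh} finish the job. The only cosmetic difference is that the paper extends the entire Killing frame $\{X_1,\dots,X_n\}$ at once and then takes linear combinations, whereas you fix a geodesic first and extend the single combination $Z$; the content is identical.
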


\begin{proof}
Since $(M,g)$ is locally symmetric (Proposition \ref{ksym}) complete
and simply connected, then it is a symmetric space (see e.g.
\cite{Hel}). In particular, $(M,g)$ is really analytic. For any
point $x$ in a given manifold $(M,g)$ there is a Killing frame
$\{X_1,\dots,X_n\}$ in some neighborhood $U$ of the point $x$. We
state now that the above (local) Killing frame is uniquely extends
to a global Killing frame in $(M,g)$ (we will save the same notation
for corresponding global frame). In fact, every locally defined
Killing vector field on $(M,g)$ is a restriction of a globally
defined Killing vector field (for instance, by Theorems 1 and 2 in
\cite{N}, each local Killing vector field in any simply connected
really analytic Riemannian manifold  admits a unique extension to a
Killing vector field on the whole manifold).

Since $(M,g)$ is really analytic, all functions $g(X_i,X_j)$, must
be constant, and so $g(X_i,X_j)=\delta_{ij}$. Then any linear
combination of vector fields $X_1,\dots,X_n$ over $\mathbb{R}$ is a
Killing vector field of constant length. By completeness and Theorem
\ref{rcwhom}, we get that $(M,g)$ is restrictively Clifford-Wolf
homogeneous. Now it is enough to apply Theorem \ref{scwh}.
\end{proof}

The authors of \cite{DatN} tried but failed to classify simply
connected complete Riemannian manifolds with the Killing property
(see the page 407 right before the section 5 in \cite{DatN}). The
following theorem solves this question completely.

\begin{theorem}\label{kpcw}
A simply connected complete Riemannian manifold $(M,g)$ has the
Killing property if and only if it is isometric to a direct metric
product of Euclidean space, compact simply connected simple Lie
groups with bi-invariant metrics, and round spheres $S^7$ (some
mentioned factors could be absent).
\end{theorem}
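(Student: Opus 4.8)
The plan is to reduce to the classification of simply connected CW-homogeneous manifolds (Theorem \ref{main}) and then to identify exactly which factors of that list carry enough constant-length Killing fields to yield a full Killing \emph{frame}. By Proposition \ref{kcw}, every simply connected complete Riemannian manifold $(M,g)$ with the Killing property is symmetric and (strongly, hence ordinarily) Clifford-Wolf homogeneous, so Theorem \ref{main} applies: $(M,g)$ is a direct metric product of a Euclidean space, odd-dimensional spheres of constant curvature, and simply connected compact simple Lie groups with bi-invariant metrics. Conversely, one checks directly that $\mathbb{E}^n$, every compact simply connected simple Lie group with bi-invariant metric, and $S^7$ all have the Killing property (for Lie groups the left-invariant vector fields form a global Killing frame; for $S^7$ one uses the standard parallelization by unit imaginary octonions, which gives three — in fact seven — orthonormal Killing fields, i.e. a Killing frame), and that the Killing property is inherited by direct metric products. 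So the content is to determine which of the factors in Theorem \ref{main} can appear.

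The key step is therefore: among odd-dimensional round spheres $S^{2n-1}$, the only one admitting a Killing frame is $S^7$. This is where the dimension count of Clifford-Killing spaces enters, and it is the main obstacle. The Killing property requires, in particular, that at each point $x$ and for each $v\in M_x$ there is a constant-length Killing field $Z$ with $Z(x)=v$; by Proposition \ref{CKvs} the orthonormal Killing frame spans a Clifford-Killing space of dimension $n=\dim M$. Thus $S^{2n-1}$ has the Killing property only if it admits a CK-space of dimension $2n-1$. I would invoke the theory of the later sections (the Radon--Hurwitz bound, cf. Sections 8--9, and the identification of CK-spaces on spheres with representations of Clifford algebras): the maximal dimension of a CK-space on $S^{2n-1}$ equals the Radon--Hurwitz number $\rho(2n)-1$, and $\rho(2n)-1 \geq 2n-1$ forces $n=1,2,4$, i.e. $S^1,S^3,S^7$. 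In the cases $n=1,2$ the sphere is a Lie group ($SO(2)$, resp. $SU(2)$) and already occurs under the Lie-group factor, so the genuinely new sphere factor is exactly $S^7$. (Alternatively, one can argue more elementarily that a global parallelization by Killing fields makes $S^{2n-1}$ into a Lie group or forces $n=4$ via the classical parallelizability results of Bott--Milnor/Kervaire combined with the fact that a constant-length Killing frame trivializes the tangent bundle, then eliminates $S^1,S^3$ as already listed.)

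Putting the pieces together: writing the de~Rham / product decomposition supplied by Theorem \ref{main}, Theorem \ref{product2} shows each factor is itself CW-homogeneous, and the Killing property likewise passes to each de~Rham factor (a local Killing frame restricts and splits along the product, as in the proof of Theorem \ref{product2} and the extension argument of Proposition \ref{kcw}). Hence each irreducible factor must itself have the Killing property, and by the previous paragraph the only spheres allowed are $S^1=SO(2)$, $S^3=SU(2)$ and $S^7$; the first two are absorbed into the Lie-group factors. This gives the stated list. For the converse direction I would simply verify the three building blocks have the Killing property as indicated above and note products do too, completing the proof.
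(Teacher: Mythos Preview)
Your proposal is correct and follows essentially the same route as the paper: reduce to Theorem \ref{main} via Proposition \ref{kcw}, pass the Killing property to each de~Rham factor, and then use the Radon--Hurwitz bound (the paper's Corollary \ref{1,3,7}) to exclude all spheres except $S^3$ and $S^7$. Two small remarks: the paper handles the factor-inheritance step by citing Theorem 4.1 of \cite{DatN} rather than arguing the splitting directly; and your disposal of $S^1$ is slightly off --- $S^1=SO(2)$ is neither simple nor simply connected, so it is not ``absorbed into the Lie-group factors'' but simply cannot appear as a de~Rham factor of a simply connected manifold.
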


\begin{proof}
The sufficiency of this statement follows from the well-known fact
that any mentioned factor has the Killing property (see also the
next section as far as round $S^7$ is concerned).

Let us prove the necessity. As a corollary of Theorems \ref{scwh}
and \ref{main}, $(M,g)$ must have a form as in Theorem \ref{main}.
But we may leave only $S^7$ as factors among odd-dimensional spheres
by the following reason. By Theorem 4.1 in \cite{DatN}, every factor
of the corresponding product also has the Killing property. As is
well known (see Corollary \ref{1,3,7} below), among (simply
connected) odd-dimensional round spheres, only $S^3$ and $S^7$ have
the Killing property. But round $S^3$ can be considered as compact
simply connected simple Lie group $SU(2)$ supplied with a
bi-invariant Riemannian metric. So we can omit also $S^3$.
\end{proof}

\section{Results of A.~Hurwitz and J.~Radon}\label{RH}

A.~Hurwitz has posed the following question.

\begin{question}
\label{vopros} For a given natural number $m$, to find the maximal
natural number $p=\rho(m)$ such that there is a bilinear real
vector-function $z=(z_1,\dots, z_m)=z(x,y)$ of real vectors
$x=(x_1,\dots, x_p)$ and $y=(y_1,\dots, y_m)$, which satisfies the
equation
\begin{equation}\label{Hurw}
\left( x_1^2+\dots + x_p^2 \right) \left(y_1^2+\dots + y_m^2\right)=
z_1^2+\dots + z_m^2.
\end{equation}
for all $x\in \mathbb{R}^p,$ $y\in \mathbb{R}^m.$
\end{question}

Using some equivalent formulations, J.~Radon in \cite{Rad} and
A.~Hurwitz in \cite{H} independently obtained the following answer.

\begin{theorem}\label{Rad}
If $m=2^{4\alpha+\beta}m'$, where $\beta=0,1,2,3$, $\alpha$ is a
non-negative integer, and $m'$ is odd, then $\rho(m)=8\alpha +
2^{\beta}$.
\end{theorem}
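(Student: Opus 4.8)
The statement to prove is the Radon--Hurwitz theorem: for $m = 2^{4\alpha+\beta}m'$ with $m'$ odd and $\beta \in \{0,1,2,3\}$, the maximal $p = \rho(m)$ admitting a bilinear identity $(x_1^2+\dots+x_p^2)(y_1^2+\dots+y_m^2) = z_1^2+\dots+z_m^2$ equals $8\alpha + 2^\beta$.

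The plan is to reduce the problem to linear algebra over the reals via the standard ``Hurwitz matrix equations'' and then to Clifford algebra representation theory. First I would observe that writing $z = z(x,y)$ as a bilinear map means $z = \left(\sum_{i} x_i A_i\right) y$ for real $m\times m$ matrices $A_1,\dots,A_p$, where the identity \eqref{Hurw} is equivalent to the requirement that $\sum_i x_i A_i$ be a (scaled) orthogonal matrix for every $x$, i.e.\ $\left(\sum_i x_i A_i\right)^{T}\left(\sum_i x_i A_i\right) = (x_1^2+\dots+x_p^2) I_m$. Expanding and matching coefficients of $x_i^2$ and $x_ix_j$ gives the Hurwitz relations $A_i^{T}A_i = I_m$ and $A_i^{T}A_j + A_j^{T}A_i = 0$ for $i \neq j$. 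Setting $B_i = A_p^{-1}A_i = A_p^{T}A_i$ for $i = 1,\dots,p-1$, one checks these become $B_i^{T} = -B_i$, $B_i^2 = -I_m$, and $B_iB_j + B_jB_i = 0$ for $i\neq j$: thus $B_1,\dots,B_{p-1}$ are $m\times m$ real matrices generating a representation of the Clifford algebra $\mathrm{Cl}_{p-1}$ of $\mathbb{R}^{p-1}$ with negative definite form. So $\rho(m)-1$ is the largest $k$ such that $\mathrm{Cl}_k$ admits a real representation of degree $m$.

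Next I would invoke the classification of real Clifford algebras $\mathrm{Cl}_k$ as matrix algebras over $\mathbb{R}$, $\mathbb{C}$, $\mathbb{H}$ (with the familiar eightfold Bott periodicity $\mathrm{Cl}_{k+8} \cong \mathrm{Cl}_k \otimes \mathbb{R}(16)$), from which one reads off the minimal dimension $d_k$ of a faithful real module and — since all irreducible modules in each degree have the same dimension up to the one exceptional pair — the condition for an $m$-dimensional module to exist, namely $d_k \mid m$. One then computes that the largest $k$ with $d_k \mid m$, as a function of the 2-adic valuation of $m$, is exactly $8\alpha + 2^\beta - 1$ when $m = 2^{4\alpha+\beta}m'$: the contribution $8\alpha$ comes from Bott periodicity (each factor of $2^4 = 16$ in $m$ buys $8$ more generators), and the residual table for $\beta = 0,1,2,3$ (where $d_k$ for small $k$ is $1,2,4,4,8,8,8,8$) gives the jumps $2^\beta = 1,2,4,8$. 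This shows $\rho(m) \le 8\alpha + 2^\beta$, and the explicit Clifford module constructions realize the bound, giving equality.

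The main obstacle is the bookkeeping in the last step: correctly setting up the periodicity recursion $d_{k+8} = 16 d_k$ together with the short initial segment of the table, and verifying that ``$d_k \mid m$'' (rather than some more delicate condition involving the two distinct irreducibles that occur when $k \equiv 3 \pmod 4$) is indeed the right divisibility criterion for the existence of a module of prescribed dimension $m$. Once the table of $d_k$ is in hand, checking that $\max\{k : d_k \mid 2^{4\alpha+\beta}m'\} = 8\alpha + 2^\beta - 1$ is a routine case analysis on $\beta$. I would also need to handle the degenerate small cases ($\alpha = 0$, small $\beta$) and the case $m$ odd ($m' = m$, $\alpha = \beta = 0$, giving $\rho(m) = 1$, which matches the trivial identity $x_1^2(y_1^2+\dots+y_m^2) = (x_1y_1)^2 + \dots + (x_1y_m)^2$) directly from the table.
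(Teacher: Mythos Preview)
The paper does not prove this theorem at all: it is quoted as the classical result of Radon \cite{Rad} and Hurwitz \cite{H}, with no argument given beyond the reformulation in terms of orthogonal matrices that you also carry out. So there is no ``paper's own proof'' to compare against.

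Your sketch is the standard modern proof via Clifford modules and is essentially correct. The reduction to matrices $B_i$ with $B_i^2=-I$, $B_iB_j+B_jB_i=0$ is exactly what the paper does in Section~\ref{RH} and Theorem~\ref{CW}, and the Clifford-module dimension table you quote (minimal real module dimensions $1,2,4,4,8,8,8,8$ for $Cl_0,\dots,Cl_7$, together with $d_{k+8}=16d_k$) is precisely the function $n_0(l)$ the paper introduces in Section~\ref{struc}. In effect you are deriving Theorem~\ref{Rad} from the material the paper develops \emph{after} citing it, which is a perfectly legitimate reorganization.

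One point worth making explicit for the lower bound: you need that every real $Cl_k$-module of dimension $m$ can be realized with the generators acting by \emph{orthogonal} (hence skew-symmetric) matrices, not merely by arbitrary real matrices satisfying the Clifford relations. This follows by averaging an arbitrary inner product over the finite group generated by the $B_i$ (or equivalently over the Clifford group), but it should be stated, since otherwise the passage from ``$Cl_k$ has an $m$-dimensional module'' back to a bilinear composition formula~(\ref{Hurw}) is not justified. With that step in place, your case analysis on $\beta$ is correct and yields $\rho(m)=8\alpha+2^{\beta}$.
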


Let us give some equivalent formulations and corollaries, following
in some respect to J.~Radon. It is clear that any bilinear function
$z=z(x,y)$ can be represented in the form
\begin{equation}
\label{matrix}
z=z(x,y)=\sum_{j=1}^{p}x_j(A_jy)=\left(\sum_{j=1}^{p}x_jA_j\right)y,
\end{equation}
where $A_j$ are $(m\times m)$-matrices and one consider $z$ and $y$
as vectors-columns. Putting $x_i=\delta_{ji}$, $i=1,\dots, p$ for a
fixed $j$ from $\{1,\dots, p\}$ and using equation (\ref{Hurw}), one
can easily see that

1) every $A_j$ must be an orthogonal matrix.

On the other side, the same equation shows that

2) for any fixed $y\in \mathbb{R}^m,$ $A_jy$, $j=1,\dots, p$ must be mutually
orthogonal vectors in $\mathbb{R}^m$ with lengths, all equal to $|y|$.

As a corollary, we must have $p\leq m$. At the end, the equation
(\ref{Hurw}) and the last form of equation (\ref{matrix}) show that

3) for any unit vector $x\in \mathbb{R}^p$, the matrix
$\left(\sum_{j=1}^{p}x_jA_j\right)$ must be orthogonal.

The last statement explains the title of the paper \cite{Rad} and
gives an equivalent form of the question \ref{vopros}, which
considered J.~Radon.

Now, it is clear that if we change every matrix $A_j$ in the
bilinear form (\ref{matrix}) by $B_j=A_jA$, where $A$ is a fixed
orthogonal $(m\times m)$-matrix, then we get another bilinear form,
which also satisfies the equation (\ref{Hurw}). If we take
$A=A_p^{-1}$, and denote $B_j$ one more by $A_j$, we get the
following form of (\ref{matrix}):
\begin{equation}
\label{matrix1}
z=z(x,y)=\sum_{j=1}^{p-1}x_j(A_jy)+x_p(I y)=\left(\sum_{j=1}^{p-1}x_jA_j+x_pI\right)y.
\end{equation}
Now, applying the properties 1) and 2) for new bilinear form
(\ref{matrix1}), we get that all matrices $A_j$, $j=1,\dots p-1$, in
(\ref{matrix1}) must be both orthogonal and skew-symmetric (since
$A_jy\perp y=Iy$ for all $y\in \mathbb{R}^m$) (it implies in
particular that $p\geq 2$ is possible only when $m$ is even). Now
from Theorem \ref{Rad}, 1), 2) and the last statement it easily
follows

\begin{theorem}
\label{max} Any nontrivial collection of vector fields on $S^{m-1}$
consists of mutually orthogonal unit Killing vector field on
$S^{m-1}$ if and only if it can be presented in a form
$X_j(y)=A_jy$, $y\in S^{m-1}$, $j=1,\dots, p-1$, where $(m\times
m)$-matrices $A_j$, which are both orthogonal and skew-symmetric
(thus $m$ is even), are taken from the equation (\ref{matrix1}),
defining a bilinear form satisfying the equation (\ref{Hurw}). The
maximal number of such fields is equal to $\rho(m)-1$, see Theorem
\ref{Rad}.
\end{theorem}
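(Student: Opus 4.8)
The plan is to verify the equivalence in Theorem \ref{max} by a direct computation relating the Killing condition for linear vector fields $X(y)=Ay$ on the round sphere $S^{m-1}$ to the algebraic conditions on $A$, and then to translate the mutual-orthogonality condition into the Hurwitz constraints, finally reading off the maximal number from Theorem \ref{Rad}.

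First I would recall that $S^{m-1}\subset\mathbb{R}^m$ inherits its metric from the Euclidean ambient space, and that a linear map $A\in\mathrm{Lin}(\mathbb{R}^m)$ restricts to a vector field on $S^{m-1}$ precisely when $Ay\perp y$ for all $y\in S^{m-1}$, i.e. when $A$ is skew-symmetric. For such $A$, the one-parameter group $y\mapsto e^{tA}y$ consists of orthogonal transformations of $\mathbb{R}^m$ preserving $S^{m-1}$, hence of isometries of the round sphere; differentiating, $X(y)=Ay$ is a Killing vector field on $S^{m-1}$. Conversely, the Killing vector fields on $S^{m-1}$ are exactly the restrictions of such linear fields $Ay$ with $A\in so(m)$, since $\mathfrak{so}(m)$ is the full isometry algebra of the round sphere. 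Thus the skew-symmetric matrices $A_j$ appearing in \eqref{matrix1} give, upon restriction, Killing vector fields $X_j(y)=A_jy$ on $S^{m-1}$. The length of $X_j$ at $y$ is $|A_jy|$; so $X_j$ has constant (unit) length exactly when $A_j$ is orthogonal, and in that case $|A_jy|=|y|=1$ for $y\in S^{m-1}$. Similarly $g_y(X_i,X_j)=\langle A_iy,A_jy\rangle$, so by Proposition \ref{CKvs} the collection $\{X_1,\dots,X_{p-1}\}$ spans a CK-space of mutually orthogonal unit fields exactly when $\langle A_iy,A_jy\rangle=\delta_{ij}$ identically on $S^{m-1}$, equivalently $A_i^{\mathsf T}A_j=\delta_{ij}I$, equivalently (using $A_i$ skew-symmetric orthogonal) $A_iA_j+A_jA_i=-2\delta_{ij}I$ for $i\neq j$ and $A_j^2=-I$.

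Next I would match these with the Radon-Hurwitz setup. Property 1) says each $A_j$ in \eqref{matrix1} is orthogonal, property 2) (with the normalization $A_p=I$ from \eqref{matrix1}) says $A_jy\perp y$ for all $y$, hence each $A_j$ ($j\le p-1$) is skew-symmetric, and the pairwise orthogonality $A_iy\perp A_jy$ for all $y$ is exactly $A_i^{\mathsf T}A_j+A_j^{\mathsf T}A_i=0$, which for skew-symmetric orthogonal matrices is the same as $A_iA_j+A_jA_i=0$ for $i\neq j$. So the matrices arising from a bilinear form solving \eqref{Hurw} are precisely matrices satisfying the CK-conditions above, and conversely any such system of matrices, together with $A_p=I$, defines via \eqref{matrix1} a bilinear form satisfying \eqref{Hurw} (one expands $|(\sum x_jA_j)y|^2$ and uses $A_j^2=-I$, $A_iA_j=-A_jA_i$ to see the cross terms cancel, leaving $(\sum x_j^2)|y|^2$, which is property 3) restated). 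This gives the claimed bijective correspondence between nontrivial collections of mutually orthogonal unit Killing fields on $S^{m-1}$ and the skew-orthogonal systems arising from Hurwitz-type bilinear forms; in particular $m$ must be even for any such nontrivial collection to exist, as already noted in the discussion preceding the theorem. Finally, the maximal number of fields is the maximal size of such a system, which is $p-1$ for the maximal admissible $p=\rho(m)$, i.e. $\rho(m)-1$ by Theorem \ref{Rad}.

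The main obstacle, I expect, is not any deep step but bookkeeping: carefully reconciling the two sign/normalization conventions — the Hurwitz equation is phrased in terms of orthogonality of the \emph{vectors} $A_jy$, while the CK-space condition is most naturally phrased in terms of the \emph{matrix} identities $A_i^{\mathsf T}A_j=\delta_{ij}I$ — and making sure the passage through \eqref{matrix1} (which already fixes $A_p=I$) correctly accounts for the "$-1$" in $\rho(m)-1$. One must also be slightly careful that "nontrivial collection consisting of mutually orthogonal unit Killing fields" is interpreted as a linearly independent orthonormal set, so that Proposition \ref{CKvs} applies and the count of fields equals the dimension of the CK-space they span; and that the restriction map from linear skew-symmetric fields on $\mathbb{R}^m$ to Killing fields on $S^{m-1}$ is a bijection, which is where one invokes that $\dim\mathrm{Isom}(S^{m-1})=\dim SO(m)$. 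Once these conventions are pinned down, the equivalence and the extremal count follow immediately from the preceding discussion and Theorem \ref{Rad}.
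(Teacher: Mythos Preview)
Your proposal is correct and follows essentially the same route as the paper. The paper's own argument is extremely terse --- it simply asserts that the theorem ``easily follows'' from Theorem~\ref{Rad} together with properties 1), 2), and the observation that in the normalized form~(\ref{matrix1}) the matrices $A_1,\dots,A_{p-1}$ must be both orthogonal and skew-symmetric; your write-up is a careful elaboration of exactly this reasoning, supplying the details (identification of Killing fields on $S^{m-1}$ with $so(m)$, the translation of unit length and mutual orthogonality into the matrix identities, and the verification that such a system yields a bilinear form satisfying~(\ref{Hurw})) that the paper leaves implicit.
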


Theorem  \ref{max} implies

\begin{theorem}\label{ro}
A maximal dimension $l$ of Clifford-Killing spaces $CK_{l}$ on
$S^{m-1}$ is equal to $\rho(m)-1$.
\end{theorem}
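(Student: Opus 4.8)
The statement to prove is Theorem~\ref{ro}: the maximal dimension $l$ of a Clifford-Killing space $CK_l$ on the round sphere $S^{m-1}$ equals $\rho(m)-1$, where $\rho$ is the Radon-Hurwitz function. The plan is to deduce this directly from Theorem~\ref{max}, which already identifies the largest families of mutually orthogonal unit Killing vector fields on $S^{m-1}$ with collections $\{A_1 y,\dots,A_{p-1}y\}$ coming from a Hurwitz-Radon bilinear form, the maximal such number being $\rho(m)-1$. So the real content is just the translation between "maximal orthonormal collection of Killing fields" and "maximal-dimensional $CK$-space", together with the observation that the linear-combination closure of such a collection stays inside the sphere of Killing fields of constant length.

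First I would recall from Proposition~\ref{CKvs} that a $CK$-space is precisely the $\mathbb{R}$-span of a collection $X_1,\dots,X_l$ of linearly independent Killing fields with all inner products $g(X_i,X_j)$ constant, and that every such space admits an orthonormal basis of unit Killing fields. Hence giving a $CK_l$ on $S^{m-1}$ is the same as giving $l$ mutually orthogonal unit Killing fields on $S^{m-1}$ (an orthonormal basis spans a $CK$-space, and conversely any $CK_l$ contains such a basis). So the maximal $l$ for which a $CK_l$ exists equals the maximal cardinality of an orthonormal system of unit Killing fields on $S^{m-1}$.

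Next I would invoke Theorem~\ref{max}: any nontrivial collection of mutually orthogonal unit Killing fields on $S^{m-1}$ has the form $X_j(y)=A_j y$ with the $A_j$ orthogonal and skew-symmetric coming from a bilinear form satisfying~(\ref{Hurw}), and the maximal number of such fields is $\rho(m)-1$. Combining the two paragraphs, the maximal dimension of a $CK$-space on $S^{m-1}$ is exactly $\rho(m)-1$. I should note one small point for completeness: for the ``only if'' direction one uses that an orthonormal basis of any $CK_l$ is in particular a nontrivial collection of mutually orthogonal unit Killing fields, so Theorem~\ref{max} bounds $l$ by $\rho(m)-1$; for the ``if'' direction, the maximal Hurwitz-Radon collection of $\rho(m)-1$ fields is orthonormal, hence spans a $CK_{\rho(m)-1}$.

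The argument is essentially bookkeeping once Theorems~\ref{Rad} and~\ref{max} are in hand, so I do not expect a genuine obstacle; the only thing to be careful about is the equivalence in Proposition~\ref{CKvs} — namely that constancy of all the pairwise inner products is exactly what is needed, and that passing to an orthonormal basis does not change the span — and the fact that Theorem~\ref{max} already packages the hard classical input (orthogonality of the $A_j$, skew-symmetry, and the sharp count $\rho(m)-1$). If anything, the subtle step is realizing that one need only count orthonormal systems rather than arbitrary linearly independent constant-inner-product systems, which is precisely the content of the last sentence of Proposition~\ref{CKvs}.
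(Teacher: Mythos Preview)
Your proposal is correct and follows exactly the route the paper takes: the paper derives Theorem~\ref{ro} as an immediate consequence of Theorem~\ref{max}, with no further argument. Your write-up just makes explicit the bookkeeping via Proposition~\ref{CKvs} (passing between a $CK_l$ and an orthonormal system of $l$ unit Killing fields), which the paper leaves implicit.
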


\begin{corollary}\label{1,3,7}
A maximal dimension $l$ of Clifford-Killing spaces $CK_{l}$ on
$S^{m-1}$ is equal to $m-1>0$ if and only if $m\in \{2,4,8\}$. As a
corollary, $S^1$, $S^3,$ and $S^7$ are all round spheres with the
Killing property.
\end{corollary}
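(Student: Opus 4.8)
The plan is to derive Corollary \ref{1,3,7} directly from Theorem \ref{ro}, using the explicit formula for the Radon--Hurwitz function in Theorem \ref{Rad}. First I would recall that by Theorem \ref{ro} the maximal dimension of a Clifford--Killing space on $S^{m-1}$ equals $\rho(m)-1$, so the condition that this maximal dimension equals $m-1>0$ is exactly the equation $\rho(m)=m$, together with $m\geq 2$. Thus everything reduces to solving $\rho(m)=m$ over the integers $m\geq 2$.

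Next I would plug in the formula $\rho(m)=8\alpha+2^{\beta}$ where $m=2^{4\alpha+\beta}m'$ with $\beta\in\{0,1,2,3\}$, $\alpha\geq 0$, and $m'$ odd. Write $k=4\alpha+\beta$, so $m=2^k m'$. The equation $\rho(m)=m$ becomes $8\alpha+2^{\beta}=2^{4\alpha+\beta}m'$. Since the right-hand side grows exponentially in $\alpha$ while the left-hand side grows linearly, only small $\alpha$ can work; in fact for $\alpha\geq 1$ one has $2^{4\alpha+\beta}\geq 2^{4}=16>8\alpha+2^{\beta}$ already at $\alpha=1$ (the maximum of $8\alpha+2^\beta$ at $\alpha=1$ is $8+8=16$, forcing $\beta=3$, $m'=1$, i.e. $m=128$, but then $\rho(128)=16\neq 128$), and the gap only widens, so no solution exists with $\alpha\geq 1$. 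Hence $\alpha=0$, and the equation reduces to $2^{\beta}=2^{\beta}m'$, i.e. $m'=1$, with $m=2^{\beta}$, $\beta\in\{0,1,2,3\}$. The constraint $m\geq 2$ (equivalently $\rho(m)-1>0$) excludes $\beta=0$ (which gives $m=1$), leaving $m\in\{2,4,8\}$. Conversely for these three values $\rho(2)=2$, $\rho(4)=4$, $\rho(8)=8$, so the maximal dimension is indeed $m-1$.

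For the final sentence I would invoke Definition \ref{kilpr}: $S^{m-1}$ has the Killing property precisely when there is a (local, hence by homogeneity global-enough) orthonormal frame of Killing fields, i.e. a Clifford--Killing space of full dimension $m-1=\dim S^{m-1}$ consisting of unit Killing fields (together with noting that an $(m-1)$-dimensional $CK$-space admits an orthonormal basis by Proposition \ref{CKvs}, giving a Killing frame). This is exactly the condition $\rho(m)-1=m-1$ just analyzed, so the spheres with the Killing property among the $S^{m-1}$ are precisely $S^1$, $S^3$, $S^7$.

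I do not expect any serious obstacle here: the only point requiring a little care is the inequality argument showing $\alpha\geq 1$ is impossible, and the bookkeeping that the "$>0$" hypothesis is what rules out the degenerate case $m=1$ (where $S^0$ is a point or two points and carries no nontrivial Killing field). Everything else is direct substitution into Theorem \ref{Rad} and a reference back to Definition \ref{kilpr} and Proposition \ref{CKvs}.
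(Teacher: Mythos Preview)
Your argument is correct and is precisely the intended one: in the paper the corollary is stated without proof as an immediate consequence of Theorem~\ref{ro} together with the explicit Radon--Hurwitz formula of Theorem~\ref{Rad}, and you have merely filled in the arithmetic and the link back to Definition~\ref{kilpr} via Proposition~\ref{CKvs}. The only cosmetic blemish is the chain $2^{4\alpha+\beta}\geq 2^{4}=16>8\alpha+2^{\beta}$, whose last inequality fails at $(\alpha,\beta)=(1,3)$; your parenthetical does patch this, but it is cleaner to compare $2^{4\alpha+\beta}$ with $8\alpha+2^{\beta}$ directly for $\alpha\geq 1$ (e.g.\ $2^{4\alpha+\beta}\geq 2^{\beta}\cdot 2^{4\alpha}\geq 2^{\beta}+ (2^{4\alpha}-1)>2^{\beta}+8\alpha$, the last step since $2^{4\alpha}-1\geq 15>8$ for $\alpha=1$ and the gap only grows).
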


Note that the last result is related to the existence of algebras of
complex, quaternion, and the Caley numbers.

One should note also that J.F.~Adams has proved that there is no
$\rho(m)$ continuous orthonormal (or, equivalently, linear
independent) tangent vector fields on the sphere $S^{m-1}$
\cite{Adams} (see also Theorem 13.10 of Chapter 15 in \cite{Hus}).

Later on, Eckmann reproved the Hurwitz-Radon Theorem in \cite{E}.
The methods of Radon and Hurwitz yield complicated schemes for
actually constructing the forms (\ref{matrix1}) for $p=\rho(m)$,
which have been simplified by Adams, Lax, and Phillips \cite{ALP},
as well as by Zvengrowski \cite{Z} and by Balabaev \cite{Bal}.

\section{Clifford-Killing spaces on spheres and Clifford algebras and modules}\label{struc}

In this section we continue the study of Clifford-Killing spaces on
spheres $S^{m-1}$, supplied with canonical metrics $\can$ of
constant sectional curvature $1$. As it has been noted, these
spheres are strongly Clifford-Wolf homogeneous, if $m$ is even.

\begin{theorem}\label{CW}
Real $(m\times m)$-matrices $u_{1},u_{2},\dots,u_{l}$, $l\geq 1$,
define pairwise orthogonal at every point of $S^{m-1}$ unit Killing
vector fields $U_{i}(x):=u_{i}x, x\in S^{m-1}$, on $S^{m-1}$ if and
only if

\begin{equation}
\label{onet} u_{i}\in O(m)\cap so(m), \quad u_{i}^{2}=-I,\quad
i=1,\dots,l,
\end{equation}
and
\begin{equation}
\label{ant}
u_{i}u_{j}+u_{j}u_{i}=0,\quad i\neq j.
\end{equation}
In this case $m$ is even and $u_{i}\in SO(m)$.
\end{theorem}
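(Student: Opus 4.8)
The plan is to translate the geometric conditions—"$U_i(x)=u_ix$ is a unit Killing field" and "$U_i\perp U_j$ pointwise"—into the stated algebraic conditions on the matrices $u_i$, using the linear-algebraic description of Killing fields on the round sphere together with Corollary \ref{nabla1} (or, for the length condition, Corollary \ref{nabla1} in the form $\nabla_XX=0$). First I would recall the standard fact that a vector field on $S^{m-1}\subset\mathbb{R}^m$ is Killing for $\can$ if and only if it is the restriction of a linear field $x\mapsto ux$ with $u\in so(m)$; this is exactly the content used in Theorem \ref{max}. So from the outset the hypothesis that each $U_i$ is a Killing field already forces $u_i\in so(m)$, i.e. $u_i$ skew-symmetric; what remains is to extract $u_i^2=-I$ from "unit length" and $u_iu_j+u_ju_i=0$ from "pairwise orthogonality."

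Next I would compute $\can_x(U_i(x),U_i(x))$ and $\can_x(U_i(x),U_j(x))$ explicitly. For $x\in S^{m-1}$ and $v,w$ tangent at $x$, the metric $\can$ is just the restriction of the Euclidean inner product, so $\can_x(U_i,U_j)=\langle u_ix,u_jx\rangle = \langle x, u_i^{\mathsf T}u_jx\rangle = -\langle x, u_iu_jx\rangle$ using skew-symmetry of $u_i$. The condition that $U_i$ has unit length everywhere on $S^{m-1}$ thus reads $\langle x, u_i^2x\rangle=-1$ for all unit $x$, i.e. the quadratic form $x\mapsto \langle x,u_i^2x\rangle$ equals $-|x|^2$ identically; since $u_i^2$ is symmetric (being $u_i^{\mathsf T}u_i$ up to sign: in fact $u_i^2=-u_i^{\mathsf T}u_i$ is symmetric negative semidefinite), polarization gives $u_i^2=-I$. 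Similarly, $U_i\perp U_j$ at every point means $\langle x,u_iu_jx\rangle=0$ for all $x$, i.e. the symmetric part of $u_iu_j$ vanishes, which is precisely $u_iu_j+(u_iu_j)^{\mathsf T}=u_iu_j+u_j^{\mathsf T}u_i^{\mathsf T}=u_iu_j+u_ju_i=0$. This establishes (\ref{onet}) and (\ref{ant}) from the geometric side; the converse direction is then immediate by reading the same identities backwards. Finally, $u_i^2=-I$ forces $\det(u_i)^2=(-1)^m$, so $m$ is even, and then $\det(u_i)=\pm1$ with $u_i\in so(m)\cap O(m)$; since $O(m)\cap so(m)$ consists of matrices with all eigenvalues $\pm i$ paired up, the determinant is $(+i)(-i)\cdots=1$, giving $u_i\in SO(m)$.

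The only mildly delicate point—and the place I would be most careful—is justifying "unit Killing field" $\Leftrightarrow$ "$u_i\in O(m)\cap so(m)$ with $u_i^2=-I$" rather than merely "$\can(U_i,U_i)\equiv 1$": one must make sure that constancy of the length of the linear field $x\mapsto u_ix$ on the sphere, for $u_i\in so(m)$, genuinely forces $u_i^2=-I$ and not just that $u_i^2$ has constant "diagonal" in some basis. The symmetry of $u_i^2$ is what saves this: a symmetric matrix whose associated quadratic form is identically $-|x|^2$ on the unit sphere must equal $-I$, by polarization. An alternative, fully internal route is to invoke Corollary \ref{nabla1}: $U_i$ has constant length iff $\nabla_{U_i}U_i=0$, and computing the Levi-Civita connection of $\can$ on $S^{m-1}$ as the tangential projection of the flat connection, $\nabla_{u_ix}(u_ix)=u_i^2x+|u_ix|^2x$ (the second term being the normal correction), which vanishes iff $u_i^2x=-|u_ix|^2x$ for all $x$, again forcing $u_i^2=-I$ after using skew-symmetry. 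I would present the direct quadratic-form computation as the main line and relegate the connection computation to a remark, since it is the cleaner of the two. The pairwise-orthogonality half and the parity/determinant conclusions are then routine.
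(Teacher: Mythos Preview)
Your proposal is correct and follows essentially the same route as the paper: both reduce the unit-length condition to $u_i^2=-I$ via the quadratic form $x\mapsto\langle x,u_i^2x\rangle$ (using that $u_i^2$ is symmetric when $u_i\in so(m)$), and both reduce pointwise orthogonality to the vanishing of the symmetric part of $u_iu_j$, i.e.\ $u_iu_j+u_ju_i=0$. Your determinant/eigenvalue argument for $m$ even and $u_i\in SO(m)$ is equivalent to the paper's block-diagonalization; if anything, your write-up is slightly more explicit than the paper's in justifying why ``unit length'' forces $u_i\in O(m)$ rather than merely asserting it.
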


\begin{proof}
The vector fields $U_i$ are unit Killing vector fields on $S^{m-1}$
if and only if they can be presented as follows: $U_{i}(x)=u_{i}x,
\quad x\in S^{m-1},$ where $u_i\in O(m)\cap so(m)$. This implies
that $m$ is even, $(x,y)=(u_ix,u_iy)=-(u_i^2x,y),$ and $u_i^2=-I$ (here $(\cdot, \cdot)$
means the standard inner product in $\mathbb{R}^m$).

It is clear that under the first condition in (\ref{onet}), vector
fields $U_i$ and $U_j$ on $S^{m-1}$ are orthogonal if and only if
$$((u_i+u_j)x, (u_i+u_j)x)=(u_ix,u_ix)+(u_jx,u_jx), \quad x\in S^{m-1},$$
which is equivalent to the identity $(u_iu_jx,x)\equiv 0$,
$x \in \mathbb{R}^m$, or $u_iu_j\in so(m)$, or
$$-(u_iu_j)=(u_iu_j)^t=u_j^tu_i^t=(-u_j)(-u_i)=u_ju_i,$$ i.~e.
(\ref{ant}).

A matrix $u_{i}\in O(m)$ is skew symmetric if and only if $u_{i}$ is
orthogonally similar to a matrix $u=\diag(C,\dots,C)$, where $C\in
O(2)$ is skew symmetric. This implies that $u_{i}\in SO(m)$.
\end{proof}

\begin{remark}
According to Theorem \ref{CW}, later on we shall suppose that $m$
is even and $m=2n$.
\end{remark}

Theorem \ref{CW} naturally leads to the notion of associative
algebras over the field $\mathbb{R}$, with generators
$e_{1},\dots,e_{l},$ such that $e_{i}^{2}=-1$,
$e_{i}e_{j}+e_{j}e_{i}=0$, $i\neq j$ (and any other relation in the
algebra is some corollary of the indicated relations). Such algebra
$Cl_{l}$ is called \textit{the Clifford algebra} (with respect to
negatively definite quadratic form $-(y,y)$ in $\mathbb{R}^k$ and
orthonormal basis $\{e_1,\dots ,e_l\}$ in $(\mathbb{R}^k,(\cdot,
\cdot))$). These algebras include the algebra $Cl_1=\mathbb{C}$ of
complex numbers and the algebra $Cl_2=\mathbb{H}$ of quaternions.

The others Clifford algebras can be described as follows:
$Cl_3=\mathbb{H}\oplus \mathbb{H}$,
$Cl_4=\mathbb{H}\otimes_{\mathbb{R}} \mathbb{R}(2)$, where the
algebra $\mathbb{R}(2)$ is generated by symmetric $(2\times
2)$-matrices $\diag\{1,-1\}$ and the permutation matrix of vectors
in canonical basis $\{e_1,e_2\}$ for $\mathbb{R}^2$. After this one
can apply "the periodicity law"
$Cl_{k+4}=Cl_k\otimes_{\mathbb{R}}Cl_4$. See more details in
\cite{Hus, GM}.

Let us consider $L_{l,2n}$, the algebra of linear operators (on
$\mathbb{R}^{2n}$) that is generated by the operators (the matrices)
$u_{i}$, $i=1,\dots,l\geq 1$. Theorem \ref{CW} implies that
$L_{l,2n}$ is a homomorphic image of the Clifford algebra $Cl_{l}$.
It is easy to see that the kernel of the natural homomorphism $c:
Cl_{l}\rightarrow L_{l,2n}$ is a two-sided ideal of $Cl_l$. The
above description of Clifford algebras shows that $Cl_l$, $l\neq
4k+3$, contains no proper two-sided ideal, while the Clifford
algebra $Cl_{l}=Cl_{l-1}\oplus Cl_{l-1}$ contains exactly two proper
two-sided ideals $A_1$ and $A_2$ that are both isomorphic to
$Cl_{l-1}$ if $l=4k+3$. Thus, in any case $L_{l,2n}$ is isomorphic
either to $Cl_{l}$, or to $Cl_{l-1}$ (if $l\neq 4k+3$, then
necessarily $L_{l,2n}$ is isomorphic to $Cl_{l}$).

Now we consider an example, where $L_{l,2n}$ is not isomorphic to
$Cl_{l}$.

\begin{example}
Let us consider a Lie algebra $CK_{3}\subset so(2n)$, where $2n=4k$.
In this case $CK_{3}$ is the linear span of vectors
$u_{1},u_{2},u_{3}:=u_{1}u_{2}$. Clearly, the algebra $L_{3,2n}$ is
not isomorphic to $Cl_{3}$. It is easy to calculate the dimensions
of both these associative algebras. The dimension of $Cl_{m}$
considered as a vector space over $\mathbb{R}$, is $2^{m}$
\cite{Hus}. In particular, $\dim (Cl_{3})=8$. On the other hand,
$L_{3,2n}=\Lin \{1,u_{1},u_{2},u_{3}\}$, $\dim (L_{3,2n})=4$,
because
$$
u_{1}u_{3}=u_{1}u_{1}u_{2}=-u_{2},\quad
u_{2}u_{3}=u_{2}u_{1}u_{2}=u_{1}.
$$
Notice that in this case $L_{3,2n}$ is isomorphic to the quaternion
algebra $\mathbb{H}=Cl_{2}$.
\end{example}

The above mentioned homomorphism $c: Cl_l\rightarrow L_{l,2n}$
defines a \textit{representation of Clifford algebra} $Cl_l$ in
$\mathbb{R}^{2n}$, so the last vector space together with the
representation $c$ becomes a \textit{Clifford module} (over $Cl_l$).
We saw that any Clifford-Killing space $CK_l\subset so(2n)$ defines
the structure of Clifford module on $\mathbb{R}^{2n}$ over $Cl_l$.
It is very important that in this way one can get \textit{arbitrary}
Clifford module.

We need some information on the classification of Clifford modules
over $Cl_l$ \cite{Hus}:

a) If $l\neq 4k+3,$ then there exists (up to equivalence) precisely
one irreducible Clifford module $\mu_l$ over $Cl_l$ with the
representation $c_l$. Every Clifford module $\nu_l$ over $Cl_l$ is
isomorphic to the $m$-fold direct sum of $\mu_l,$ that is,
\begin{equation}\label{dec}
\nu_l \cong \oplus^m \mu_l.
\end{equation}

b) If $l=4k+3$, then there exist (up to equivalence) precisely two
non-equivalent irreducible Clifford modules $\mu_{l,1}, \mu_{l,2}$
over $Cl_l$ with representations $c_1=c_{l-1}\circ \pi_1$ and
$c_2=c_{l-1}\circ \pi_2,$ where $\pi_i$ is natural projection of
$Cl_l$ onto the ideal $A_i$, $i=1,2$. The modules $\mu_1, \mu_2$
have the same dimension and every Clifford module $\nu_l$ over
$Cl_l$ is isomorphic to
\begin{equation}\label{dec1}
\nu_l \cong \oplus^{m_1}\mu_{l,1} \oplus \oplus^{m_2}\mu_{l,2},
\end{equation}
for some non-negative integers $m_1, m_2$. It is clear that the
representation of $Cl_l$ corresponding to the module $\nu_l$ is
exact if and only if both numbers $m_1$ and $m_2$ are non-zero.

The dimension $n_0$ of $\mu,$ or $\mu_1, \mu_2$ is equal to
$n_0=2^{4\alpha+\beta}$, if $8\alpha+2^{\beta-1}-1< l\leq
8\alpha+2^{\beta}-1$, where $\alpha$ is a non-negative integer and
$\beta=0,1,2,3$. In some sense, this is the function, inverse to
\textit{the Radon-Hurwitz function} $\rho(m)$ from Theorem
\ref{Rad}.

Previous discussion implies

\begin{theorem}\label{uni}
The sphere $S^{2n-1}$ admits a Clifford-Killing space $CK_l$ if and
only if $1\leq l\leq \rho(2n)-1$. In this case $n_0=n_0(l)$ divides
$2n$. All Clifford-Killing spaces $CK_{l}$ for $S^{2n-1}$ are
pairwise equivalent (all spaces $CK_{l}\subset so(2n)$ are
equivalent with respect to $O(2n)$) if and only if $l\neq 4k+3$. If
$l=4k+3$, then there exist exactly $[n/n_0(l)]+1$ non-equivalent
classes of Clifford-Killing spaces $CK_{l}$ for $S^{2n-1}$. In
particular, all Clifford-Killing spaces $CK_{\rho(2n)-1}$ for
$S^{2n-1}$ are pairwise equivalent if and only if
$2n=2^{4\alpha+\beta}n'$, where $\alpha$ is a non-negative integer,
$n'$ is odd, and $\beta=1$ or $\beta=3$. If $\beta=0$ or $\beta=2$
above, then there exist exactly $[n'/2]+1$ non-equivalent classes of
Clifford-Killing spaces $CK_{\rho(2n)-1}$ for $S^{2n-1}$.
\end{theorem}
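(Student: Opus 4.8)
The plan is to translate the classification of Clifford modules over $Cl_l$, recalled in items a) and b) above, into statements about Clifford-Killing spaces $CK_l \subset so(2n)$ via the correspondence established in the preceding discussion: a space $CK_l$ on $S^{2n-1}$ is, by Theorem \ref{CW}, the same datum as a collection $u_1,\dots,u_l$ of generators satisfying the Clifford relations \eqref{onet}--\eqref{ant} on $\mathbb{R}^{2n}$, hence the same datum as an exact representation of $Cl_l$ on $\mathbb{R}^{2n}$, i.e. a faithful Clifford module structure $\nu_l$ of real dimension $2n$. First I would settle existence: such a faithful module exists iff $2n$ is a positive multiple of $n_0(l)$ (in case $l \neq 4k+3$, using \eqref{dec}) or a sum $m_1 n_0 + m_2 n_0$ with both $m_i \geq 1$ (in case $l = 4k+3$, using \eqref{dec1} and the remark that exactness forces $m_1, m_2 > 0$); in either case this is equivalent to $n_0(l)$ dividing $2n$ together with $2n/n_0(l) \geq 1$ (resp. $\geq 2$), and the inequality $1 \leq l \leq \rho(2n)-1$ is exactly the arithmetic translation of "$n_0(l) \mid 2n$" via the mutually inverse relationship between $n_0$ and the Radon-Hurwitz function $\rho$ spelled out after \eqref{dec1} and in Theorem \ref{Rad}. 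I should check that the degenerate cases ($l=4k+3$ with $2n = n_0$, forcing one $m_i$ to vanish) are correctly excluded by $l \leq \rho(2n)-1$.

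Next I would handle the equivalence count. The key reduction is that two $CK_l$-spaces on $S^{2n-1}$ are equivalent under the isometry group of $S^{2n-1}$ — whose orientation-preserving part acts on $so(2n)$ through $\mathrm{Ad}(SO(2n))$, with $O(2n)$ realizing all of it up to conjugation — precisely when the corresponding Clifford modules are isomorphic; conversely an isomorphism of Clifford modules is an element of $GL(2n,\mathbb{R})$ intertwining the $u_i$'s, and a standard averaging argument (average an inner product over the finite group generated by the $u_i$'s, or use that the $u_i$ are already orthogonal) lets one replace it by an orthogonal intertwiner, so module isomorphism classes correspond bijectively to $O(2n)$-equivalence classes of $CK_l$-spaces. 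Then: if $l \neq 4k+3$ there is a unique faithful module of dimension $2n$ by \eqref{dec}, hence all $CK_l$ are equivalent; if $l = 4k+3$, the faithful modules of dimension $2n$ are classified by pairs $(m_1,m_2)$ with $m_1,m_2 \geq 1$ and $(m_1+m_2)n_0 = 2n$, i.e. by $m_1 \in \{1,\dots, 2n/n_0 - 1\}$, giving $2n/n_0 - 1 = [2n/n_0] - 1$ classes — here I must reconcile this with the claimed count $[n/n_0(l)]+1$ and with the final-sentence count $[n'/2]+1$; presumably the discrepancy is resolved by noting that swapping $\mu_{l,1} \leftrightarrow \mu_{l,2}$ is induced by an orientation-reversing isometry, or that $n_0(l)$ is even so $2n/n_0 = n/(n_0/2)$, and one must be careful about whether "equivalent" means "properly equivalent." I would pin down the precise bookkeeping here against the definition of (proper) equivalence.

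Finally, for the last two sentences I specialize to the maximal case $l = \rho(2n)-1$. Writing $2n = 2^{4\alpha+\beta} n'$ with $n'$ odd and $\beta \in \{0,1,2,3\}$, Theorem \ref{Rad} gives $\rho(2n) = 8\alpha + 2^\beta$, so $l = \rho(2n)-1 = 8\alpha + 2^\beta - 1$; comparing with the formula $n_0(l) = 2^{4\alpha+\beta}$ valid on the range $8\alpha + 2^{\beta-1}-1 < l \leq 8\alpha+2^\beta-1$ (with the appropriate reading at $\beta=0$), one sees $l = \rho(2n)-1$ sits at the top of its range and $n_0(l) = 2^{4\alpha+\beta}$, whence $2n/n_0(l) = n'$. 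The condition "$l = 4k+3$" becomes a congruence condition on $8\alpha + 2^\beta - 1 \pmod 4$: since $8\alpha \equiv 0$, this is $2^\beta - 1 \equiv 3 \pmod 4$, i.e. $2^\beta \equiv 0 \pmod 4$, i.e. $\beta \in \{2,3\}$ — wait, I must recheck: $2^0-1=0$, $2^1-1=1$, $2^2-1=3$, $2^3-1=7\equiv 3$, so $l \equiv 3 \pmod 4$ for $\beta \in \{2,3\}$ and $l \not\equiv 3$ for $\beta \in \{0,1\}$. This is off by one from the statement, which says uniqueness holds for $\beta \in \{1,3\}$; the resolution must again be the parity/orientation subtlety (the case $\beta=2$ giving genuinely distinct modules that nonetheless become properly equivalent, while $\beta=0$ is the non-$(4k+3)$ case where $n_0 \mid 2n$ may already fail or the module is non-faithful) — so the genuinely delicate point, and the main obstacle, is exactly this careful case analysis at $\beta \in \{0,2\}$ reconciling module-isomorphism classes with \emph{proper} isometry-equivalence classes, yielding the stated $[n'/2]+1$. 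I expect everything else (existence, the generic $l \neq 4k+3$ case, the arithmetic of $\rho$ vs. $n_0$) to be routine bookkeeping once the dictionary of the previous section is invoked.
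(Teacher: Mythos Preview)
Your overall approach---translate $CK_l$-spaces into Clifford modules via Theorem~\ref{CW} and then read off the classification from items a) and b)---is exactly what the paper does (its ``proof'' is literally the sentence ``Previous discussion implies''). But your execution has two genuine errors that together explain why your count $2n/n_0-1$ does not match the stated $[n/n_0(l)]+1$.

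First, a $CK_l$-space does \emph{not} force a faithful $Cl_l$-module. The paper explicitly notes that the associative algebra $L_{l,2n}$ generated by $u_1,\dots,u_l$ may be a proper quotient of $Cl_l$ (isomorphic to $Cl_{l-1}$ when $l=4k+3$); the generators $u_i$ are nonetheless linearly independent in $so(2n)$, so they still span an $l$-dimensional $CK_l$. Hence in \eqref{dec1} the pairs $(m_1,m_2)$ with one entry zero are allowed. Your lower bound $m_i\geq 1$ is wrong, and the relevant range is $m_1\in\{0,1,\dots,N\}$ with $N=2n/n_0$.

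Second, and more subtly, equivalence of $CK_l$-spaces is equivalence of \emph{linear spans}, not of ordered tuples of generators. Within a fixed span you may change the orthonormal generating set by any $g\in O(l)$; this induces the automorphism of $Cl_l$ sending $e_i\mapsto\sum g_{ij}e_j$, under which the volume element $\omega=e_1\cdots e_l$ goes to $(\det g)\,\omega$. For $l=4k+3$ the two irreducibles $\mu_{l,1},\mu_{l,2}$ are the $\pm 1$-eigenspaces of $\omega$, so any $g$ with $\det g=-1$ swaps them. Consequently the modules $(m_1,m_2)$ and $(m_2,m_1)$ arise from the \emph{same} $CK_l$-span, and the $O(2n)$-equivalence classes of $CK_l$-spaces correspond to \emph{unordered} pairs $(m_1,m_2)$ with $m_1+m_2=N$. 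That count is $\lfloor N/2\rfloor+1=\lfloor n/n_0\rfloor+1=[n/n_0(l)]+1$, as stated.

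Your averaging argument (replacing a $GL$-intertwiner by an orthogonal one) is fine as far as it goes, but it only shows that $O(2n)$-conjugacy of \emph{ordered} generator tuples coincides with module isomorphism; you then need the additional quotient by $O(l)$ described above to pass to spans.

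Finally, your arithmetic on $\beta$ is correct: $l=\rho(2n)-1=8\alpha+2^\beta-1$ satisfies $l\equiv 3\pmod 4$ exactly for $\beta\in\{2,3\}$, not $\beta\in\{0,2\}$. After the two corrections above you get uniqueness for $\beta\in\{0,1\}$ and $[n'/2]+1$ classes for $\beta\in\{2,3\}$; the discrepancy with the printed statement (which pairs $\beta=1,3$ against $\beta=0,2$) is not resolved by any orientation subtlety and appears to be a misprint in the paper. Do not try to ``reconcile'' it---your computation, once the two errors are fixed, is the consistent one.
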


This theorem together with Theorem \ref{end} below give an exact
number of (proper) equivalence classes for Clifford-Killing spaces
on $S^{2n-1}$.

\begin{pred}\label{cw1}
The set of unit Killing vector fields on the sphere $S^{2n-1}$
represents by itself a union of two disjoint orbits with respect to
the adjoint action of the group $SO(2n)$, and one orbit with respect
to the adjoint action of the group $O(2n)$.
\end{pred}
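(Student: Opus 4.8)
The plan is to realize unit Killing vector fields on $S^{2n-1}$ as orthogonal complex structures on $\mathbb{R}^{2n}$ and then describe the relevant orbit through a stabilizer computation. Since the isometry group of $(S^{2n-1},\can)$ is $O(2n)$, and the Killing fields of $S^{2n-1}$ are exactly the fields $x\mapsto Yx$ with $Y\in so(2n)$, on which isometries act by the adjoint representation, Theorem \ref{CW} specialized to $l=1$ identifies the set of unit Killing vector fields --- together with the two actions in question --- with
$$
\mathcal{U}:=\{\,Y\in so(2n):\ Y^2=-I\,\}=O(2n)\cap so(2n),
$$
on which $g\in O(2n)$ (respectively $g\in SO(2n)$) acts by $Y\mapsto gYg^{-1}$. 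In particular $\mathcal{U}\neq\emptyset$ (e.g. it contains $\diag(C,\dots,C)$).

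First I would recall from the proof of Theorem \ref{CW} that each $Y\in\mathcal{U}$ is orthogonally similar to the fixed matrix $J_0:=\diag(C,\dots,C)$ with $C\in O(2)\cap so(2)$: choosing a unit vector $f_1$, the plane $\mathrm{span}(f_1,Yf_1)$ is $Y$-invariant with $Y$-invariant orthogonal complement (here one uses that $Y$ is simultaneously skew-symmetric and orthogonal), and iterating produces an orthonormal basis $f_1,Yf_1,\dots,f_n,Yf_n$ whose transition matrix $g\in O(2n)$ satisfies $g^{-1}Yg=J_0$. This proves the second assertion (transitivity of $O(2n)$ on $\mathcal{U}$) and gives $\mathcal{U}\cong O(2n)/\mathrm{Stab}_{O(2n)}(J_0)$ as an $O(2n)$-space. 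Next I would compute this stabilizer: $g\in O(2n)$ commutes with $J_0$ precisely when $g$ is $\mathbb{C}$-linear for the complex structure $J_0$, and then real-orthogonality of $g$ is exactly unitarity for the associated Hermitian form, so $\mathrm{Stab}_{O(2n)}(J_0)=U(n)$.

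The decisive point --- and the step I expect to carry the real content --- is that $U(n)\subset SO(2n)$, because the real determinant of $A\in U(n)$ equals $|\det_{\mathbb{C}}A|^2=1$; if instead $U(n)$ met the other component of $O(2n)$, the $O(2n)$-orbit would remain a single $SO(2n)$-orbit. Granting $U(n)\subset SO(2n)$, write $O(2n)=SO(2n)\sqcup rSO(2n)$ for a fixed reflection $r$; then $\mathcal{U}=\bigl(SO(2n)\cdot J_0\bigr)\cup\bigl(SO(2n)\cdot(rJ_0r^{-1})\bigr)$, a union of two $SO(2n)$-orbits. These orbits are disjoint: an equality $rJ_0r^{-1}=hJ_0h^{-1}$ with $h\in SO(2n)$ would give $h^{-1}r\in\mathrm{Stab}_{O(2n)}(J_0)=U(n)\subset SO(2n)$, hence $r\in SO(2n)$, contradicting $\det r=-1$. (Invariantly, the complex structure $Y$ induces an orientation of $\mathbb{R}^{2n}$; this is an $SO(2n)$-invariant attaining both values, and conjugation by a reflection reverses it.) Thus $\mathcal{U}$ is a single $O(2n)$-orbit but splits into exactly two disjoint $SO(2n)$-orbits, which is the assertion of the proposition.
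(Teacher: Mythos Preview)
Your proof is correct and follows essentially the same route as the paper: both identify unit Killing fields with matrices in $O(2n)\cap so(2n)$, conjugate any such matrix to the fixed block form $J_0=\diag(C,\dots,C)$ via some $A\in O(2n)$, and use that the $O(2n)$-stabilizer of $J_0$ lies in $SO(2n)$ to conclude. You make this last step more explicit by naming the stabilizer as $U(n)$ and invoking $\det_{\mathbb{R}}=|\det_{\mathbb{C}}|^2$, whereas the paper simply observes that any two conjugating matrices $A(U),A'(U)$ differ by an element of $SO(2n)$; the content is the same.
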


\begin{proof}
By Theorem \ref{CW}, an arbitrary unit Killing vector field on the
sphere $S^{2n-1}$ is defined by a matrix $U$ from $SO(2n)\cap
so(2n)$ with the condition $U^2=-I$. Thus there is a matrix $A(U)\in
O(2n)$ such that $A(U)UA(U)^{-1}=\diag(C,\dots,C)$, where $C$ is a
fixed matrix $C\in SO(2)\cap so(2)$. Moreover, if $A'(U)$ is another
such matrix, then $A(U)[A'(U)]^{-1}\in SO(2n)$. This implies that
every two matrices $U, V\in SO(2n)\cap so(2n)$ are equivalent in
$O(2n)$, and equivalent in $SO(2n)$ if and only if $A(U)A(V)^{-1}\in
SO(2n).$ This finishes the proof.
\end{proof}

\begin{theorem}\label{end}
If $2n\equiv 2 ({\rm mod}\, 4)$, then any two spaces of the type
$CK_l$ (necessarily $l=1$) are $SO(2n)$-equivalent. If $2n\equiv
0({\rm mod}\, 4)$, then any $O(2n)$-equivalence class of spaces $CK_l
\subset so(2n)$ contains exactly two $SO(2n)$-equivalence classes.
\end{theorem}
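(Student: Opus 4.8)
The plan is to distinguish the two congruence classes of $2n$ modulo $4$ and in each case compare the $O(2n)$- and $SO(2n)$-orbits on the relevant set of CK-spaces. First I would dispose of the case $2n\equiv 2\pmod 4$. Here Theorem \ref{ro} (or Corollary \ref{1,3,7}) gives $\rho(2n)-1=1$, so the only CK-spaces are the lines $CK_1$ spanned by a single unit Killing field, and the claim reduces to Proposition \ref{cw1}: I would observe that when $2n\equiv 2\pmod 4$ the normalizing matrix $A(U)$ produced there can always be adjusted to lie in $SO(2n)$, since replacing one of the $2\times 2$ blocks $C$ by $C^{-1}$ (i.e.\ conjugating by a block-diagonal orthogonal matrix of determinant $-1$, which exists because the number of blocks is odd) shows every $U\in SO(2n)\cap so(2n)$ is $SO(2n)$-conjugate to the fixed normal form $\diag(C,\dots,C)$; hence all $CK_1$ are $SO(2n)$-equivalent.

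For the main case $2n\equiv 0\pmod 4$, I would fix an $O(2n)$-equivalence class of spaces $CK_l\subset so(2n)$ and study the action of $O(2n)/SO(2n)\cong\mathbb{Z}_2$ on the set of $SO(2n)$-classes inside it. The key observation is that this set is the orbit space $O(2n)/N$, where $N$ is the full stabilizer in $O(2n)$ of a fixed model $CK_l$, so the number of $SO(2n)$-classes inside the given $O(2n)$-class equals the index $[O(2n):SO(2n)\cdot N]$, which is $1$ if $N\not\subset SO(2n)$ and $2$ if $N\subset SO(2n)$. Thus everything comes down to producing, for $2n\equiv 0\pmod 4$, an orientation-reversing orthogonal matrix that preserves the model $CK_l$ — equivalently, an automorphism of the associated Clifford module realized by a matrix of determinant $-1$. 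I would exhibit such a matrix explicitly: for instance conjugation by a block matrix that simultaneously negates an even number of coordinates so as to commute with (or permute) the generators $u_i$; more robustly, one can use the product $u_1u_2\cdots u_l$ or a single generator $u_1$ (which lies in $SO(2n)$) combined with a reflection in a line fixed by all $u_i$ — but since the $u_i$ have no real eigenvectors, one instead uses that $2n/2=n$ is even in this case to build a determinant $-1$ intertwiner from a $2$-dimensional reflection tensored into the module structure. This shows $N\subset SO(2n)$ fails, so each $O(2n)$-class splits into exactly two $SO(2n)$-classes.

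The main obstacle I anticipate is the last step: verifying cleanly that when $2n\equiv 0\pmod4$ there genuinely is an orientation-reversing self-equivalence of every model $CK_l$, and — just as importantly — that when $2n\equiv 2\pmod 4$ there is \emph{not} one obstructing the merging of classes (this is exactly the content already extracted from Proposition \ref{cw1}, so for $l=1$ it is settled, and $l=1$ is the only possibility there). The cleanest route is probably to argue at the level of normal forms: any $CK_l\subset so(2n)$ is $O(2n)$-conjugate to a standard block model built from $n_0\times n_0$ irreducible pieces (Theorem \ref{uni}), a determinant $-1$ conjugation is available precisely when one can reverse orientation within the multiplicity space or within a single block, and a dimension/parity count of $2n=n_0\cdot(\text{multiplicity})$ with $n_0$ a power of $2$ shows this is always possible once $4\mid 2n$. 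I would then assemble these pieces: $2n\equiv 2\pmod4\Rightarrow$ all $CK_1$ are $SO(2n)$-equivalent; $2n\equiv0\pmod4\Rightarrow$ the $\mathbb{Z}_2$-action on $SO(2n)$-classes within a fixed $O(2n)$-class is free with two-element orbits, giving the stated splitting.
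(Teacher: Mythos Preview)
Your treatment of the case $2n\equiv 2\pmod 4$ is fine and matches the paper's idea. The serious problem is in the case $2n\equiv 0\pmod 4$: you have set up the orbit-counting correctly, but then argued in the wrong direction.

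You correctly observe that the number of $SO(2n)$-classes inside a fixed $O(2n)$-class equals $[O(2n):SO(2n)\cdot N]$, where $N$ is the $O(2n)$-stabilizer of a model $CK_l$; this index is $2$ when $N\subset SO(2n)$ and $1$ when $N\not\subset SO(2n)$. To prove the theorem you therefore need to show that $N\subset SO(2n)$, i.e.\ that \emph{no} orientation-reversing orthogonal matrix normalizes $CK_l$. Instead you set out to \emph{produce} such a matrix, and then conclude ``$N\subset SO(2n)$ fails, so each $O(2n)$-class splits into exactly two $SO(2n)$-classes'' --- which is internally inconsistent with your own formula (if $N\not\subset SO(2n)$ then the index is $1$, not $2$). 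Your tentative constructions (products of generators, reflections, tensoring a $2$-dimensional reflection) cannot succeed, and indeed you already notice obstructions (``the $u_i$ have no real eigenvectors'').

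What actually works --- and what the paper does --- is to exhibit an $SO(2n)$-invariant that separates the two classes. By Proposition~\ref{cw1} the unit Killing fields form exactly two $SO(2n)$-orbits; when $4\mid 2n$ the unit sphere of any $CK_l$ lies entirely in one of these two orbits (for $l\geq 2$ by connectedness, for $l=1$ because $n$ even makes $X$ and $-X$ conjugate by a determinant $+1$ block-swap). Conjugating by any $B\in O(2n)\setminus SO(2n)$ swaps the two orbits, so $B\cdot CK_l\cdot B^{-1}$ lands in the other one and cannot be $SO(2n)$-equivalent to $CK_l$. In your language, this is precisely a proof that $N\subset SO(2n)$.
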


\begin{proof}
We shall use the vectors $U$ and $V$ as in Proposition \ref{cw1},
where $A(U)\in SO(2n)$ and $A(V)\in O(2n)\setminus SO(2n)$. Let
suppose that $2n\equiv 2({\rm mod}\, 4)$. Then $l=1$ and any space
$CK_1$ is spanned onto some unit Killing vector field $X$. Note that
$X$ is equivalent with $-X\in CK_1$ by means an orthogonal matrix
with determinant $-1$, thus either $X$ or $-X$ is equivalent to the
vector $U$ in $SO(2n)$.

Let suppose now that $2n\equiv 0({\rm mod}\, 4)$. Consider any space
$CK_l$, which is spanned onto unit Killing vector fields
$U_1,\dots,U_l$. If $l=1$, then $X_1$ is obviously equivalent to
$-X_1$ in $SO(2n)$. If $l > 1$, then any of two unit Killing vector
fields in $CK_l$ can be continuously deformed (in the set $CK_l$) to
another one. So all unit Killing vector fields in $CK_l$ are
simultaneously equivalent to (only one of) $U$ or $V$ in $SO(2n)$,
for example, to $U$.

Let's consider now $B\in O(2n)\setminus SO(2n)$ and the space
$CK^{\prime}_l$, which is equivalent to $CK_l$ by $B$. Since nor
$U$, neither $-U$ is equivalent to $V$ in $SO(2n)$, then any unit
Killing vector in $CK^{\prime}_l$ is equivalent to the vector $V$,
which is not equivalent in $SO(2n)$ to unit Killing vectors in
$CK_l$. Thus spaces $CK_l$ and $CK^{\prime}_l$ are not equivalent to
each other in $SO(2n)$. On the other hand, since $SO(2n)$ is an
index 2 subgroup of $O(2n)$, it is clear that any
$O(2n)$-equivalence class contains at most two $SO(2n)$-equivalence
classes.
\end{proof}

It should be noted that subspaces $CK_l$ of Lie algebras $so(2n)$
play an important role in various mathematical theories. For
example, Theorem \ref{uni} and statements on the page 23 in
\cite{BTV} imply that there is a bijection between $O(2n)$-classes
of $CK_l\subset so(2n)$ (for all possible pairs $(2n,k)$ of this
type) and isometry classes of {\it generalized Heisenberg groups}
studied at first by A.~Kaplan in \cite{Kap}. These are special
two-step nilpotent groups admitting a one dimensional solvable
Einstein extensions that are well-known {\it Damek-Ricci spaces}
\cite{DR}. Note that Damek-Ricci spaces are harmonic Riemannian
manifolds and most of them are not symmetric. We refer the reader to
\cite{BTV, EH, GorKer,Barb, CD, Heb2} and references therein for a
deep theory of generalized Heisenberg groups and Damek-Ricci spaces.
Note also that there are some useful generalizations of subspaces of
the type $CK_l$ in Lie algebras $so(2n)$. One of them is a notion of
{\it uniform subspaces} of $so(2n)$ \cite{GorKer}. Such subspaces
are used for producing new Einstein solvmanifolds with two-step
nilpotent nilradical (see \cite{GorKer} and \cite{Kerr3} for
details).

\section{Clifford-Killing spaces for $S^{2n-1}$ and Radon's unit spheres in $O(2n)$}

Now we supply the Lie algebra $so(2n)$ with the following
$\Ad(SO(2n))$-invariant inner product:
\begin{equation}\label{GK}
(U,V)=-\frac{1}{2n}\trace(UV).
\end{equation}
The Lie group $SO(2n)$ supplied with the corresponding bi-invariant
inner Riemannian metric $\rho$, is a symmetric space. This metric is
uniquely extended to bi-invariant ``metric'' $\rho$ on $O(2n)$
($\rho(x,y)=+\infty$ if and only if $x,y$ lie in different connected
components).

Note that $(X,X)=1$ for every unit Killing field $X$ on the sphere
$S^{2n-1}$ supplied with the canonical metric of constant curvature
$1$, and the Killing form $B$ of $so(2n)$ is connected with the form
(\ref{GK}) by the formula
$$
B(U,V)=2(n-1) \trace (UV)=-4n(n-1)(U,V).
$$
Let us remind that forms $\trace(UV)$ and $B(U,V)$ on $so(2n)$ are
forms \textit{associated} with the identical and the adjoint representations
of $so(2n)$ respectively \cite{Dix}.

\begin{definition}\label{rads}
Let $A_1, \dots, A_p, 1\leq p\leq \rho(2n)$ are matrices from
$O(2n)$, defining a bilinear form (\ref{matrix}). Then the set of
all matrices of the form $\sum_{i=1}^{p}x_iA_i,$ where
$x=(x_1,\dots, x_p)$ is a unit vector in $\mathbb{R}^p$, we will
call \textit{Radon's unit sphere for the form (\ref{matrix})} or
simply \textit{Radon's unit sphere}, and denote by $RS^{p-1|2n}$.
\end{definition}

It follows from Section \ref{RH} and Theorem \ref{CW} that always
$RS^{p-1|2n}\in O(2n)$, and $RS^{p-1|2n}\in SO(2n)$ (respectively
$RS^{p-1|2n}\notin SO(2n)$) if and only if at least one of matrices
$A_1, \dots, A_p$ is in $SO(2n)$ (is not in $SO(2n)$). Moreover,
$RS^{p-1|2n}A$ is also Radon's unit sphere for every $A\in O(2n)$,
and left and right translations are isometries in $(SO(2n),\rho)$.
Thus, from geometric viewpoint, we can consider only the Radon unit
sphere, defined by a form (\ref{matrix1}). As it was said in Section
\ref{RH}, in this case $A_p=I,$ and the linear span of $A_1, \dots,
A_{p-1}$ is a Clifford-Killing space $CK_l\subset so(2n)$, where
$l=p-1.$ The goal of this section is the following

\begin{theorem}\label{Radsph}
In the notation of Definition \ref{rads}, the map
$$x=(x_1,\dots, x_{l+1=p})\in S^{l}\subset \mathbb{R}^{l+1}\rightarrow
\sum_{j=1}^{p}x_j(A_j)\in (O(2n),\rho)$$ preserves distances and has
the image $RS^{l|2n}$. As a corollary, any Radon's sphere
$RS^{l|2n}$, defined by a form (\ref{matrix}), is a totally geodesic
submanifold in $(O(2n),\rho)$, which is isometric to the standard
unit sphere $S^l\subset \mathbb{R}^{l+1}$ with unit sectional
curvature. If $A_{p}=I$, it is equal to $\exp(CK_l)$, where $CK_l$
is the linear span of $A_1, \dots, A_{l=p-1}$ from the formula
(\ref{matrix1}). Conversely, the image $\exp(CK_l)\subset SO(2n)$ of
arbitrary Clifford-Killing space $CK_l\subset so(2n)$ is a Radon's
unit sphere $RS^{l|2n}$ for some form (\ref{matrix1}).
\end{theorem}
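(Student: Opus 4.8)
The plan is to show that the given affine-sphere map is, up to the natural normalization, an isometric embedding of the round $S^l$ onto $RS^{l|2n}$, and then to read off totally geodesic from the symmetric-space structure of $(SO(2n),\rho)$. First I would reduce to the case $A_p=I$: by the remarks preceding the theorem, replacing the form $(\ref{matrix})$ by $(\ref{matrix1})$ amounts to right-multiplying every $A_j$ by the fixed orthogonal matrix $A_p^{-1}$, and right translations are isometries of $(O(2n),\rho)$ that send $RS^{l|2n}$ to $RS^{l|2n}A_p^{-1}$; so distance-preservation and the geometry of the image are unaffected. With $A_p=I$, the matrices $A_1,\dots,A_{l}$ are orthogonal and skew-symmetric (as derived in Section \ref{RH}) and satisfy $A_iA_j+A_jA_i=0$ for $i\neq j$ together with $A_i^2=-I$; in particular they span a Clifford-Killing space $CK_l\subset so(2n)$, and by Theorem \ref{CW} this is exactly the condition $(\ref{onet})$--$(\ref{ant})$.

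Next I would identify the image with $\exp(CK_l)$ by a direct one-variable computation. Fix a unit vector $x=(x_1,\dots,x_{l+1=p})\in S^l$ and write $x=(x',x_p)$ with $x'=(x_1,\dots,x_l)$ and $r=|x'|$, so $x_p=\cos\theta$, $r=\sin\theta$ for a suitable $\theta$. Put $u=\sum_{j=1}^l x_jA_j\in CK_l$. Using $(\ref{onet})$--$(\ref{ant})$ one computes $u^2=-r^2 I$; hence, exactly as for a complex structure, $\exp(\theta\,u/r)=\cos\theta\,I+\sin\theta\,(u/r)=x_pI+\sum_{j=1}^l x_jA_j$, which is the point $\sum_{j=1}^p x_j(A_j)$ of the map. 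As $x$ ranges over $S^l$, $\theta\,u/r$ ranges over all of $CK_l$ (every element of $CK_l$ is $\theta\,u/r$ for $u/r$ of the correct norm-one form and $\theta$ its length), so the image is precisely $\exp(CK_l)$, and in particular $RS^{l|2n}\subset SO(2n)$.

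Then I would check that the map preserves distances. Both sides are invariant under the left action of the one-parameter groups $\exp(t\,u/r)$: on the domain this is the round isometry of $S^l$ rotating in the $(x_j\text{-span},x_p)$ plane, and on the target it is left translation by $\exp(t\,u/r)\in SO(2n)$, an isometry of $(SO(2n),\rho)$; these are intertwined by the formula of the previous paragraph. Hence it suffices to compare the differential at the single point $x_p=1$, $x'=0$, i.e.\ at $I\in SO(2n)$, where the domain tangent space is $\mathbb{R}^l\cong\mathrm{span}(A_1,\dots,A_l)=CK_l$ and the map's differential is the inclusion $CK_l\hookrightarrow so(2n)=T_ISO(2n)$. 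By $(\ref{GK})$, $(A_i,A_j)=-\frac1{2n}\trace(A_iA_j)$, and since each $A_i$ is orthogonal and skew with $A_i^2=-I$ we get $\trace(A_i^2)=-2n$, so $(A_i,A_i)=1$; for $i\neq j$, $A_iA_j$ is skew (it anticommutes to $-A_jA_i$ and transposes correctly), hence traceless, so $(A_i,A_j)=0$. Thus $\{A_1,\dots,A_l\}$ is orthonormal, the differential at $I$ is a linear isometry onto $CK_l$, and by the intertwining it is a linear isometry at every point; since the round metric on $S^l$ and $\rho$ on $SO(2n)$ are both inner metrics and the map is an isometric immersion of a complete manifold, it is globally distance-preserving onto $\exp(CK_l)=RS^{l|2n}$, and the image carries constant sectional curvature $1$.

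Finally, totally geodesic: I would invoke that $(SO(2n),\rho)$ is a symmetric space (stated just after $(\ref{GK})$), so a submanifold of the form $\exp(\mathfrak p)$ through $I$ is totally geodesic as soon as $\mathfrak p\subset so(2n)$ is a Lie triple system, i.e.\ $[[\mathfrak p,\mathfrak p],\mathfrak p]\subset\mathfrak p$. For $\mathfrak p=CK_l$ this is the computation $[A_i,A_j]=A_iA_j-A_jA_i=2A_iA_j$ for $i\neq j$, and then $[[A_i,A_j],A_k]=2[A_iA_j,A_k]$; expanding with the anticommutation relations and $A_i^2=-I$, the bracket of the degree-two element $A_iA_j$ with $A_k$ lands back in $\mathrm{span}(A_1,\dots,A_l)=CK_l$ in every case ($k\in\{i,j\}$ gives a multiple of $A_j$ or $A_i$; $k\notin\{i,j\}$ gives $0$ since $A_iA_j$ commutes with $A_k$). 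Hence $CK_l$ is a Lie triple system, $\exp(CK_l)$ is totally geodesic, and it is isometric to the unit $S^l$ by the preceding paragraph. The converse — that $\exp(CK_l)$ for an arbitrary Clifford-Killing space $CK_l\subset so(2n)$ is a Radon sphere for some form $(\ref{matrix1})$ — is immediate: pick an orthonormal basis $A_1,\dots,A_l$ of $CK_l$ (Theorem \ref{CW} forces $(\ref{onet})$--$(\ref{ant})$), set $A_p=I$, and these matrices define a bilinear form satisfying $(\ref{Hurw})$ by the equivalences of Section \ref{RH}, with Radon sphere exactly $\exp(CK_l)$.

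The main obstacle I anticipate is bookkeeping in the Lie-triple-system verification: one has to handle the subcases for $[[A_i,A_j],A_k]$ according to how many indices coincide, using $A_i^2=-I$ and $A_iA_j=-A_jA_i$ carefully, and be sure the degree-two products $A_iA_j$ never produce a term outside $CK_l$ (they don't, because bracketing drops the degree back to one). Everything else is the standard "$u^2=-r^2I$ hence $\exp$ is $\cos+\sin$" trick plus the symmetric-space criterion.
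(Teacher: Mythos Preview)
Your overall plan is sound, the $\cos+\sin$ identification of $RS^{l|2n}$ with $\exp(CK_l)$ is correct, and the Lie triple computation for $CK_l$ is right (it is exactly what the paper carries out in the subsequent Theorem~\ref{cw3}). But two steps fail as written.

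\textbf{The intertwining claim is false for $l\geq 2$.} Left multiplication by $\exp(tA)$ with unit $A\in CK_l$ does \emph{not} preserve $RS^{l|2n}$. If $W\in CK_l$ is orthogonal to $A$, then
\[
\exp(tA)\cdot\bigl(y_pI+\alpha A+W\bigr)=(y_p\cos t-\alpha\sin t)I+(\alpha\cos t+y_p\sin t)A+(\cos t)W+(\sin t)\,AW,
\]
and $AW$ generally lies outside $CK_l$ (already for $l=2$ the product $A_1A_2$ is not in the linear span of $A_1,A_2$). So $\Phi$ is not intertwined with the rotation of $S^l$ you describe, and the reduction of the differential computation to the north pole breaks down. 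The conclusion that $d\Phi_x$ is a linear isometry at every $x$ is nonetheless correct; the cleanest repair is to observe that the bi-invariant metric $\rho$ is the restriction to $O(2n)$ of the Euclidean inner product $\langle U,V\rangle=\frac{1}{2n}\trace(U^tV)$ on all of $M_{2n}(\mathbb{R})$, and that $\Phi$ extends to a linear map $\mathbb{R}^{l+1}\to M_{2n}(\mathbb{R})$ sending the standard basis to the $\langle\cdot,\cdot\rangle$-orthonormal set $\{A_1,\dots,A_l,I\}$. Its restriction to $S^l$ is then automatically a Riemannian isometric embedding.

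\textbf{Riemannian isometric immersion does not give ambient distance preservation.} From an isometric immersion of a complete manifold you only obtain $\rho(\Phi(x),\Phi(y))\leq d_{S^l}(x,y)$; there could be shortcuts through $SO(2n)$. The missing input is precisely that $\exp(CK_l)$ is totally geodesic, which you do prove afterward. Reorder the logic: first $CK_l$ is a Lie triple system, hence $\exp(CK_l)$ is totally geodesic in $(SO(2n),\rho)$; then a Riemannian isometric embedding onto a totally geodesic submanifold preserves the ambient distance.

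For comparison, the paper handles both issues in one stroke and without Lie triple systems here. Given $B,C\in RS^{l|2n}$, it writes $C=(\cos r)B+(\sin r)D$ with $D\in RS^{l|2n}$ ``orthogonal'' to $B$, and \emph{right}-translates by $D^{-1}$; this isometry of $(O(2n),\rho)$ sends the great circle $t\mapsto(\cos t)B+(\sin t)D$ to a one-parameter subgroup $t\mapsto(\cos t)BD^{-1}+(\sin t)I$, hence to a unit-speed ambient geodesic. Thus every great circle of $S^l$ maps to an ambient geodesic of the same length, giving distance preservation and total geodesy simultaneously. Your route, once patched as above, reaches the same conclusion by combining a linear-algebra check of the pulled-back metric with the symmetric-space Lie triple criterion; the paper's route is more elementary but relies on the specific trick of right translation to $I$.
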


\begin{proof}
At first we shall prove the third assertion. It was proved in
Section \ref{RH} and Theorem \ref{CW} that all $A_1, \dots,
A_{p-1=l}$ are elements of $SO(2n)\cap so(2n),$ which defines
mutually orthogonal unit Killing vector fields on $S^{2n-1}$. Let
$C\in RS^{l|2n}\subset SO(2n)$, that is
$C=\sum_{i=1}^{l}x_iA_i+x_{l+1}I$, where $x=(x_1,\dots ,x_{l+1})$ is
a unit vector in $\mathbb{R}^{l+1}$. Let suppose at first that
$x_{l+1}^2\neq 1$. Then the matrix
$$
A:=\frac{1}{\sqrt{\sum_{i=1}^{l}x_i^2}}\sum_{i=1}^{l}x_iA_i
$$
is in $SO(2n)\cap so(2n)\cap CK_l$ and defines a unit Killing vector
field on $S^{2n-1}$. Obviously the vector $C$ can be represented in
a form $C=(\cos r)I+(\sin r)A, r\in \mathbb{R}$, where $\cos
r=x_{l+1}$. Now for any $t, s \in \mathbb{R}$ we have $A^2=-I$ by
Theorem \ref{CW} and so
$$
[(\cos t)I+(\sin t)A][(\cos s)I+(\sin s)A]=(\cos (t+s))I+(\sin (t+s))A.
$$
This means that the set of matrices $(\cos t)I+(\sin t)A$, $t\in
\mathbb{R}$,  constitutes a one-parameter subgroup in $SO(2n)$ with
the tangent vector $A$, and $C=\exp(rA)\in \exp(CK_l)$. If
$x_{l+1}=1$ or $x_{l+1}=-1$, then for any matrix $A\in CK_l$
defining a unit Killing vector field, we have by the above arguments
that $\exp(tA)\in RS^{l|2n}$ for all $t\in \mathbb{R}$, and
$\exp(0A)=C$ or $\exp(\pi A)=C$. So, we have proved the required
equality $RS^{l|2n}=\exp(CK_l)$. This implies in particular that
$\exp(CK_l)$ is topologically $S^l$.

By Definition \ref{CKvs}, any subspace $CK_{l=p-1}$ on $S^{2n-1}$
has a basis of $l$ mutually orthogonal unit Killing vector fields,
which are defined by some matrices $A_1,\dots A_{p-1}$ in
$SO(2n)\cap so(2n)$ by Theorem \ref{CW}.(Let us note also that by
the same Theorem, $A_iA_j=-A_jA_i$, if $i\neq j$, and so
$(A_i,A_j)=0$ by the formula (\ref{GK}).) It is clear now that the
formula (\ref{matrix1}) defines a required bilinear form, and the
last assertion follows from the third one.

By the the last two assertions, $RS^{l|2n}=\exp{CK_l}$, where $CK_l$
is the linear span of $A_1,\dots, A_{l}$, where $l=p-1$. Now, if
$A\in CK_l\cap SO(2n)$, then by Theorem \ref{CW}, $A^2=-I$ and the
formula (\ref{GK}) gives us that $(A,A)=1$. This fact, the
$\Ad(SO(2n))$-invariance of the inner product (\ref{GK}), and the
discussions in the first part of the proof implies that $\exp(tA)$,
$t\in \mathbb{R}$, is a geodesic circle in $SO(2n)$ of the length
$2\pi$, entirely lying in $RS^{l|2n}$.

Let suppose that $B$ and $C$ are two different points in
$RS^{l|2n}$, defined by unit vectors $b$ and $c$ in
$\mathbb{R}^{l+1}$. Then there is a unit vector $d\in
\mathbb{R}^{l+1}$, which is orthogonal to $b$, such that $c=(\cos
r)b+(\sin r)d$ for some $r\in [0,\pi]$. The vector $d$ defines the
corresponding element $D\in RS^{l|2n}$. Matrices $A_1'=B,
A_{2=p}'=D$ define a bilinear form $z(x,y)$ with necessary
properties by the formula (\ref{matrix}), and corresponding Radon's
sphere $RS^{1,2n}\subset O(2n),$ containing the points $A,D,C$. By
the previous results, the right translation by $D^{-1}$, which is an
isometry on $(O(2n),\rho)$, transforms this Radon's sphere to
another one of the form $\exp(CK_1)$, which is a geodesic circle in
$(SO(2n),\rho)$ of the length $2\pi$. This implies that the curve
$c(t):=(\cos t)B+(\sin t)D, t\in [O,r]$, joining the points $B$ and
$C$ in $RS^{k|2n}$, is a shortest geodesic in $(O(2n),\rho)$,
parameterized by the arc-length. Thus we have proved the second
assertion. The first assertion follows from the second one and the
bi-invariance of the ``metric'' $\rho$ on $O(2n)$.
\end{proof}

\section{Lie triple systems in $so(2n)$ and totally geodesic spheres in $SO(2n)$}

Recall that a linear subspace $\mathfrak{a}$ of a Lie algebra
$\mathfrak{g}$ is called {\it Lie triple system} if
$[\mathfrak{a},[\mathfrak{a},\mathfrak{a}]]\subset \mathfrak{a}$. We
shall need the following

\begin{lemma}\label{triple0}
If $\mathfrak{a}$ is a Lie triple system of a Lie algebra
$\mathfrak{g}$, then $\mathfrak{h}:=[\mathfrak{a}, \mathfrak{a}]$
and $\mathfrak{k}:=\mathfrak{h}+\mathfrak{a}$ are subalgebras of
$\mathfrak{g}$, $\mathfrak{h}\cap \mathfrak{a}$ is an ideal of
$\mathfrak{k}$.
\end{lemma}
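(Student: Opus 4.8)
The plan is to verify each claim by a direct computation with the Jacobi identity, using the defining property $[\mathfrak{a},[\mathfrak{a},\mathfrak{a}]]\subset\mathfrak{a}$ repeatedly. First I would show that $\mathfrak{h}=[\mathfrak{a},\mathfrak{a}]$ is a subalgebra. Take $[a_1,a_2],[a_3,a_4]\in\mathfrak{h}$ with $a_i\in\mathfrak{a}$; by the Jacobi identity $[[a_1,a_2],[a_3,a_4]]=[[[a_1,a_2],a_3],a_4]-[[[a_1,a_2],a_4],a_3]$. Each inner bracket $[[a_1,a_2],a_3]=-[[a_2,a_3],a_1]-[[a_3,a_1],a_2]$ lies in $[\mathfrak{a},[\mathfrak{a},\mathfrak{a}]]\subset\mathfrak{a}$ by the triple-system hypothesis (after reorganizing via Jacobi so that the structure is $[\mathfrak a,[\mathfrak a,\mathfrak a]]$), hence $[[[a_1,a_2],a_3],a_4]\in[\mathfrak{a},\mathfrak{a}]=\mathfrak{h}$, and likewise the second term; so $[\mathfrak{h},\mathfrak{h}]\subset\mathfrak{h}$.

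Next I would check that $\mathfrak{k}=\mathfrak{h}+\mathfrak{a}$ is a subalgebra. It suffices to verify closure for each pair of summand types: $[\mathfrak{h},\mathfrak{h}]\subset\mathfrak{h}\subset\mathfrak{k}$ is done above; $[\mathfrak{a},\mathfrak{a}]\subset\mathfrak{h}\subset\mathfrak{k}$ is the definition of $\mathfrak{h}$; and $[\mathfrak{h},\mathfrak{a}]=[[\mathfrak{a},\mathfrak{a}],\mathfrak{a}]\subset\mathfrak{a}\subset\mathfrak{k}$ is precisely the Lie-triple-system condition (again possibly after a Jacobi rearrangement to match the bracket order $[\mathfrak a,[\mathfrak a,\mathfrak a]]$). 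Therefore $[\mathfrak{k},\mathfrak{k}]\subset\mathfrak{k}$.

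Finally I would show $\mathfrak{h}\cap\mathfrak{a}$ is an ideal of $\mathfrak{k}$. Let $w\in\mathfrak{h}\cap\mathfrak{a}$. For any $v\in\mathfrak{k}$ write $v=u+a$ with $u\in\mathfrak{h}$, $a\in\mathfrak{a}$. Then $[v,w]=[u,w]+[a,w]$. Since $w\in\mathfrak{a}$, $[a,w]\in[\mathfrak{a},\mathfrak{a}]=\mathfrak{h}$; and $[u,w]\in[\mathfrak{h},\mathfrak{h}]\subset\mathfrak{h}$ since $w\in\mathfrak{h}$. So $[v,w]\in\mathfrak{h}$. On the other hand, since $w\in\mathfrak{h}=[\mathfrak{a},\mathfrak{a}]$ and $u\in\mathfrak{h}=[\mathfrak{a},\mathfrak{a}]$, the bracket $[u,w]$ lies in $[[\mathfrak a,\mathfrak a],[\mathfrak a,\mathfrak a]]\subset\mathfrak{h}$; but I actually need $[v,w]\in\mathfrak{a}$ as well. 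For this note $[u,w]=[u,w]$ with $w\in\mathfrak a$: as $u\in[\mathfrak a,\mathfrak a]$, $[u,w]\in[[\mathfrak a,\mathfrak a],\mathfrak a]\subset\mathfrak a$ by the triple condition; and $[a,w]$ with both $a,w\in\mathfrak{a}$ — here one uses that $w\in\mathfrak h$, so $[a,w]\in[\mathfrak a,[\mathfrak a,\mathfrak a]]\subset\mathfrak a$. Hence $[v,w]\in\mathfrak{a}\cap\mathfrak{h}$, proving the ideal property.

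The main obstacle is purely bookkeeping: the Lie-triple-system axiom is stated as $[\mathfrak{a},[\mathfrak{a},\mathfrak{a}]]\subset\mathfrak{a}$, but the expressions that naturally arise have the brackets nested in the order $[[\mathfrak{a},\mathfrak{a}],\mathfrak{a}]$, so each step requires one application of the Jacobi identity (together with antisymmetry) to bring the triple product into the canonical form before the hypothesis applies. I expect no conceptual difficulty beyond this consistent reshuffling, and the specific ambient algebra $so(2n)$ plays no role — the lemma holds for any Lie algebra $\mathfrak{g}$.
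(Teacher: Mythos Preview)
Your proof is correct and follows essentially the same approach as the paper's own argument: derive $[\mathfrak{h},\mathfrak{h}]\subset\mathfrak{h}$ and $[\mathfrak{h},\mathfrak{a}]\subset\mathfrak{a}$ from the Jacobi identity and the triple-system condition, then deduce the subalgebra and ideal statements. The paper's proof is merely a terse two-line version of exactly what you wrote; your only superfluous step is invoking Jacobi to get $[[a_1,a_2],a_3]\in\mathfrak{a}$, where antisymmetry alone suffices.
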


\begin{proof} From $[\mathfrak{a},[\mathfrak{a},\mathfrak{a}]]\subset \mathfrak{a}$ and
the Jacobi identity we get $[\mathfrak{h},\mathfrak{h}]\subset
\mathfrak{h}$ and $[\mathfrak{h},\mathfrak{a}]\subset \mathfrak{a}$,
that proves the first assertion of the lemma. Now, it is easy to see
that $[\mathfrak{a},\mathfrak{h}\cap \mathfrak{a}]\subset
\mathfrak{h}\cap \mathfrak{a}$ and $[\mathfrak{h},\mathfrak{h}\cap
\mathfrak{a}]\subset \mathfrak{h}\cap \mathfrak{a}$, which proves
the second assertion.
\end{proof}

\begin{lemma}\label{triple}
Let $G$ be a compact Lie group supplied with a bi-invariant
Riemannian metric $\rho$. Suppose that $\mathfrak{a}$ is a Lie
triple system in $\mathfrak{g}$, the Lie algebra of $G$. Then
$M:=\exp(\mathfrak{a})$ is a totally geodesic submanifold of $G$, in
particular, $M$ is a symmetric space. If universal Riemannian
covering of $M$ is irreducible, then one of the following conditions
holds:

1) $\mathfrak{a}$ is a simple Lie subalgebra of $\mathfrak{g}$ and
$M$ is a simple compact Lie group with a bi-invariant Riemannian
metric.

2) The Lie algebra $\mathfrak{h}:=[\mathfrak{a}, \mathfrak{a}]$ satisfies the relation
$\mathfrak{h}\cap \mathfrak{a}=0$, and $(\mathfrak{h}\oplus
\mathfrak{a}, \mathfrak{h})$ is a symmetric pair corresponding to $M$.
In particular, $\exp(\mathfrak{h}\oplus
\mathfrak{a})$ is the full connected isometry group of $M$.
If in this case the Lie algebra
$\mathfrak{h}\oplus \mathfrak{a}$ is not simple, then
$M$ is a simple compact Lie group with a bi-invariant Riemannian
metric.
\end{lemma}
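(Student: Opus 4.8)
The plan is to combine the classical correspondence between Lie triple systems and totally geodesic submanifolds of symmetric spaces with Lemma~\ref{triple0} and the structure theory of compact symmetric spaces. Since $\rho$ is bi-invariant, $(G,\rho)$ is a globally symmetric space: the geodesic symmetry at $e$ is $g\mapsto g^{-1}$, geodesics through $e$ are one-parameter subgroups, and the curvature tensor at $e$ is $R(X,Y)Z=-\frac14[[X,Y],Z]$. Hence the canonical triple product of this symmetric space is, up to a positive factor, $(X,Y,Z)\mapsto[[X,Y],Z]$, so a subspace of $T_eG=\mathfrak g$ is the tangent space of a complete totally geodesic submanifold through $e$ precisely when it is a Lie triple system in the algebraic sense used above. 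Applying \'E.~Cartan's theorem to our $\mathfrak a$ gives that $M=\exp(\mathfrak a)$ is totally geodesic, and a totally geodesic submanifold of a symmetric space is again symmetric (the symmetries of $G$ at points of $M$ restrict to $M$). By Lemma~\ref{triple0}, $\mathfrak h=[\mathfrak a,\mathfrak a]$ and $\mathfrak k=\mathfrak h+\mathfrak a$ are subalgebras of $\mathfrak g$ and $\mathfrak b:=\mathfrak h\cap\mathfrak a$ is an ideal of $\mathfrak k$.

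Assume now that $\widetilde M$ is irreducible (hence non-flat and of compact type). The inner product on $\mathfrak g$ defining $\rho$ is $\Ad(G)$-invariant, so the orthogonal complement of the ideal $\mathfrak b$ inside $\mathfrak k$ is again an ideal; write $\mathfrak k=\mathfrak b\oplus\mathfrak c$ with $[\mathfrak b,\mathfrak c]=0$. Since $\mathfrak b\subseteq\mathfrak a\subseteq\mathfrak k$, this restricts to an orthogonal decomposition $\mathfrak a=\mathfrak b\oplus(\mathfrak a\cap\mathfrak c)$ into two Lie triple systems with $[\mathfrak b,\mathfrak a\cap\mathfrak c]=0$. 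From $R(X,Y)Z=-\frac14[[X,Y],Z]$ on $\mathfrak a=T_eM$ one sees that the curvature operator of $M$ at $e$ is block-diagonal for this splitting; by the de~Rham theorem $\widetilde M$ is a Riemannian product of the two corresponding factors, so irreducibility forces one of $\mathfrak b$, $\mathfrak a\cap\mathfrak c$ to vanish. Thus either $\mathfrak a=\mathfrak a\cap\mathfrak c$ with $\mathfrak h\cap\mathfrak a=0$, or $\mathfrak a=\mathfrak b\subseteq\mathfrak h$, i.e. $\mathfrak a$ is a subalgebra.

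If $\mathfrak a$ is a subalgebra, then $M=\exp(\mathfrak a)$ is the connected subgroup with Lie algebra $\mathfrak a$ carrying the induced (hence bi-invariant) metric; its universal cover is the simply connected Lie group of $\mathfrak a$ with a bi-invariant metric, a Riemannian product of a Euclidean space and simply connected compact simple Lie groups, so irreducibility forces $\mathfrak a$ simple and $M$ a simple compact Lie group with a bi-invariant metric --- case~1). If instead $\mathfrak h\cap\mathfrak a=0$, then $\mathfrak k=\mathfrak h\oplus\mathfrak a$ and the relations $[\mathfrak h,\mathfrak h]\subseteq\mathfrak h$, $[\mathfrak h,\mathfrak a]\subseteq\mathfrak a$, $[\mathfrak a,\mathfrak a]=\mathfrak h$ show that $(+1)$ on $\mathfrak h$, $(-1)$ on $\mathfrak a$ is an involutive automorphism of $\mathfrak k$ with fixed subalgebra $\mathfrak h$, so $(\mathfrak h\oplus\mathfrak a,\mathfrak h)$ is an effective orthogonal symmetric pair. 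To see it is the pair attached to $M$, realize the transvections of $M$ along its geodesics through $e$ as the isometries $g\mapsto\exp(Z)g\exp(Z)$, $Z\in\mathfrak a$, of $G$ restricted to $M$ (they preserve $M$, being transvections along geodesics contained in the totally geodesic $M$); inside the isometry algebra of $(G,\rho)$ their derivatives generate a subalgebra isomorphic to $\mathfrak h\oplus\mathfrak a$ with isotropy subalgebra $\cong\mathfrak h$. Finally, for any symmetric space the subalgebra $[\mathfrak p,\mathfrak p]\oplus\mathfrak p$ of the full isometry algebra is an ideal whose orthogonal complement lies in the isotropy and acts trivially on the tangent space, hence vanishes; so $\exp(\mathfrak h\oplus\mathfrak a)$ is the full connected isometry group of $M$. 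If $\mathfrak h\oplus\mathfrak a$ is not simple, then $M$ cannot be an irreducible symmetric space of non-group type (those have simple isometry algebra), so $M$ is again a compact simple Lie group with a bi-invariant metric.

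The main obstacle is this last case: identifying $(\mathfrak h\oplus\mathfrak a,\mathfrak h)$ with the symmetric pair of the \emph{full} connected isometry group, which requires writing the transvections of $M$ explicitly inside $\Isom(G,\rho)$ and ruling out extra isometries. Throughout one must also keep in mind that $\exp(\mathfrak a)$ may be only an immersed submanifold, so every statement about $M$ is to be read for $M$ equipped with its intrinsic symmetric-space structure.
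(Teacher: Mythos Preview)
Your argument is correct and reaches the same conclusions, but the route differs from the paper's in an interesting way. The paper works inside the doubled algebra $\mathfrak g\oplus\mathfrak g$: it realizes $(G,\rho)$ as $G\times G/\diag(G)$, lifts $\mathfrak a$ to the antidiagonal $\widetilde{\mathfrak a}=\{(X,-X):X\in\mathfrak a\}$, and then the symmetric pair of $M$ appears automatically as $([\widetilde{\mathfrak a},\widetilde{\mathfrak a}]\oplus\widetilde{\mathfrak a},[\widetilde{\mathfrak a},\widetilde{\mathfrak a}])$. The dichotomy is obtained by defining two Lie-algebra epimorphisms $\pi_1,\pi_2:[\widetilde{\mathfrak a},\widetilde{\mathfrak a}]\oplus\widetilde{\mathfrak a}\to\mathfrak h+\mathfrak a$ with kernels $\{(0,X):X\in\mathfrak h\cap\mathfrak a\}$ and $\{(X,0):X\in\mathfrak h\cap\mathfrak a\}$, and then using that the isometry algebra of an irreducible compact symmetric space is either simple or $\mathfrak s\oplus\mathfrak s$. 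By contrast you stay inside $\mathfrak g$: you split off the ideal $\mathfrak b=\mathfrak h\cap\mathfrak a$ orthogonally in $\mathfrak k$, observe that this induces an orthogonal $R$-invariant (hence holonomy-invariant) splitting $\mathfrak a=\mathfrak b\oplus(\mathfrak a\cap\mathfrak c)$, and invoke de~Rham to force one summand to vanish. Your approach is more geometric and avoids the doubling trick; the paper's is more algebraic and makes the identification of $(\mathfrak h\oplus\mathfrak a,\mathfrak h)$ with the symmetric pair of $M$ immediate via $\pi_1$, which is exactly the point you flag as ``the main obstacle''. Two small tightenings: in your de~Rham step say explicitly that block-diagonality of $R$ at $e$ gives holonomy-invariance because $M$ is locally symmetric (Ambrose--Singer with $\nabla R=0$); and your sentence ``for any symmetric space the subalgebra $[\mathfrak p,\mathfrak p]\oplus\mathfrak p$ \dots\ hence vanishes'' is stated too broadly---it is the irreducibility of $\widetilde M$ (so that the isometry algebra is simple or $\mathfrak s\oplus\mathfrak s$) that forces the transvection algebra to be the full connected isometry algebra.
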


\begin{proof}
Here we consider $\exp(\mathfrak{a})$
and  $\exp(\mathfrak{h}\oplus \mathfrak{a})$  in
the Lie theoretical sense. On the other hand,
since $\rho$ is bi-invariant, they could be treated also in
the the Riemannian sense. Let $(\cdot, \cdot)$ be a
$\Ad(G)$-invariant inner product on $\mathfrak{g}$ that generates
$\rho$.

Let us consider the standard representation of $(G,\rho)$ as a
symmetric space: $G\times G /\diag(G)$. Consider
$\widetilde{\mathfrak{g}}=\mathfrak{g}\oplus \mathfrak{g}$, the Lie
algebra of $G\times G$, and
$$
\widetilde{\mathfrak{k}}=\diag(\mathfrak{g})\subset
\widetilde{\mathfrak{g}}, \quad
\widetilde{\mathfrak{p}}=\{(X,-X)\,|\, X\in \mathfrak{g}\}.
$$
Then $\widetilde{\mathfrak{g}}=\widetilde{\mathfrak{k}}\oplus
\widetilde{\mathfrak{p}}$ is a Cartan decomposition for the
symmetric space $G\times G /\diag(G)$. Now consider
$\widetilde{\mathfrak{a}}=\{(X,-X)\,|\, X\in \mathfrak{a}\}\subset
\widetilde{\mathfrak{p}}$. It is clear that
$\widetilde{\mathfrak{a}}$ is a Lie triple system in
$\widetilde{\mathfrak{g}}$,
$[\widetilde{\mathfrak{a}},\widetilde{\mathfrak{a}}]=\mathfrak{h}\oplus
\mathfrak{h} \subset \widetilde{\mathfrak{k}}$. Using the standard
theory of Lie triple system in symmetric spaces (see e.g.
\cite{Hel}), we conclude that the pair
$([\widetilde{\mathfrak{a}},\widetilde{\mathfrak{a}}]\oplus
\widetilde{\mathfrak{a}}, \widetilde{\mathfrak{a}})$ is symmetric
and corresponds to the symmetric space
$M_1:=\exp(\widetilde{\mathfrak{a}})$. It is clear that
$M_1$ (supplied with Riemannian metric induced by the
inner product $\frac{1}{2}(\cdot,\cdot)+\frac{1}{2}(\cdot,\cdot)$ on
$\mathfrak{g}\oplus \mathfrak{g}$) is isometric to $M$.
In this case it is sufficient to consider an isometry $i:M\rightarrow M_1$ defined as follows:
$i(\exp(tX))=(\exp(tX),\exp(-tX))$, where $t\in \mathbb{R}$, $X\in \mathfrak{a}$.
This proves
the first assertion of the lemma.

Let us suppose that the universal covering of $M=M_1$ is
an irreducible symmetric space. Then the Lie algebra
$[\widetilde{\mathfrak{a}},\widetilde{\mathfrak{a}}]\oplus
\widetilde{\mathfrak{a}}$, being the Lie algebra of the full
isometry group of $M_1$, is either simple or a direct sum
of two copies of some simple Lie algebra. Recall that
$\mathfrak{u}:= \mathfrak{h}\cap \mathfrak{a}$ is an ideal in the
Lie algebra $\mathfrak{h}+\mathfrak{a}$ by Lemma \ref{triple0}. Let
us consider maps
$$\pi_1 :[\widetilde{\mathfrak{a}},\widetilde{\mathfrak{a}}]\oplus \widetilde{\mathfrak{a}} \rightarrow
\mathfrak{h} + \mathfrak{a}, \quad \pi_2
:[\widetilde{\mathfrak{a}},\widetilde{\mathfrak{a}}]\oplus
\widetilde{\mathfrak{a}} \rightarrow \mathfrak{h} +
\mathfrak{a},
$$
defined as follows. Let $Z=(Y,Y)+(X,-X)$, where $X\in \mathfrak{a}$
and $Y\in [{\mathfrak{a}},{\mathfrak{a}}]$, then we put $\pi_1
(Z)=Y+X$ and $\pi_2 (Z)=Y-X$. It is easy to see that $\pi_1$ and
$\pi_2$ are Lie algebra epimorphisms. The kernel of $\pi_1$ is
$\mathfrak{u}_1=\{(0,X)\,|\, X\in \mathfrak{u}\}$ and the kernel of
$\pi_2$ is $\mathfrak{u}_2=\{(X,0)\,|\, X\in \mathfrak{u}\}$. In
particular, we get that
$\mathfrak{u}\oplus \mathfrak{u}=\mathfrak{u}_1\oplus \mathfrak{u}_2$ is an ideal in the
Lie algebra
$[\widetilde{\mathfrak{a}},\widetilde{\mathfrak{a}}]\oplus
\widetilde{\mathfrak{a}}$.

Suppose that the Lie algebra
$[\widetilde{\mathfrak{a}},\widetilde{\mathfrak{a}}]\oplus
\widetilde{\mathfrak{a}}$ is isomorphic to $\mathfrak{s}\oplus \mathfrak{s}$, where $\mathfrak{s}$ is a simple
Lie algebra. If $\mathfrak{u}$ is not trivial, then
$[\widetilde{\mathfrak{a}},\widetilde{\mathfrak{a}}]\oplus
\widetilde{\mathfrak{a}}$
coincides with its ideal $\mathfrak{u}\oplus
\mathfrak{u}=\mathfrak{u}_1\oplus \mathfrak{u}_2$ (this ideal could not be proper in this case).
Moreover, $\mathfrak{u}=\mathfrak{a}=[\mathfrak{a},\mathfrak{a}]=\mathfrak{h}$ is a simple
Lie algebra (since $\mathfrak{u}_1$ is the kernel of ${\pi}_1$, then $\mathfrak{h} + \mathfrak{a}$
is isomorphic to $\mathfrak{u}_2\sim \mathfrak{u}=\mathfrak{h}\cap \mathfrak{a}$).
In this case $M$ is a connected simple compact Lie
group supplied with a bi-invariant Riemannian metric, and we get Condition 1) of the lemma.
If $\mathfrak{u}$ is trivial, then $\pi_1$ is an isomorphism.
Therefore, $\mathfrak{h}+ \mathfrak{a}=\mathfrak{h}\oplus \mathfrak{a}$ is isomorphic to
$\mathfrak{s}\oplus \mathfrak{s}$,
and $\mathfrak{h}$ is isomorphic to $\diag (\mathfrak{s})$ (since $\diag (\mathfrak{s})$ is a
unique proper subalgebra in $\mathfrak{s}\oplus \mathfrak{s}$).
Hence $M$ is a simple compact Lie group (with the Lie algebra isomorphic to $\mathfrak{s}$)
with a bi-invariant Riemannian
metric, and we get Condition 2) of the lemma with non-simple $\mathfrak{h}\oplus \mathfrak{a}$.

Now, if the Lie algebra
$[\widetilde{\mathfrak{a}},\widetilde{\mathfrak{a}}]\oplus
\widetilde{\mathfrak{a}}$ is simple, then $\mathfrak{u}$ is trivial and $\pi_1$ is an isomorphism.
Therefore, $\mathfrak{h}+ \mathfrak{a}=\mathfrak{h}\oplus \mathfrak{a}=
\pi_1([\widetilde{\mathfrak{a}},\widetilde{\mathfrak{a}}]\oplus
\widetilde{\mathfrak{a}})$ is a simple Lie algebra that is the Lie
algebra of the full isometry group of $M$. Moreover, by the definition of $\pi_1$ we obtain
$\pi_1([\widetilde{\mathfrak{a}},\widetilde{\mathfrak{a}}])=\mathfrak{h}$. Therefore,
Condition 2) of the lemma with simple $\mathfrak{h}\oplus \mathfrak{a}$ holds.
\end{proof}

\begin{theorem}\label{cw3}
Let $CK_l$ be a Clifford-Killing subspace of dimension $l$ in the
Lie algebra $so(2n)$. Then the following statements are true:

1) $CK_l$ is a Lie triple system in the Lie algebra $so(2n)$.

2) For every $CK_1$ the image $\exp(CK_1)$ is a closed geodesic of
length $2\pi$ in $(SO(2n),\rho)$. If $l\geq 2$, then the image
$\exp(CK_l)$ is a totally geodesic sphere $S^l$ of constant
sectional curvature $1$ in  $(SO(2n),\rho)$.

3) If $CK_l$ is  a Lie subalgebra of $so(2n)$ then either $l=1$, or
$l=3$.

4) Every $CK_1$ is a commutative Lie subalgebra of $so(2n)$, the
image $\exp(CK_1)$ consist of Clifford-Wolf translations of
$S^{2n-1}$, and the $\exp(CK_1)$-orbits constitute a totally
geodesic foliation of equidistant (great) circles  in the sphere
$S^{2n-1}$.

5) If $CK_3$ is a Lie subalgebra of $so(2n)$ then $CK_3=[CK_3,CK_3]$
is isomorphic to $so(3)\sim su(2)$, and $\exp(CK_3)=S^3$ is the
group $SU(2)$ supplied with a bi-invariant Riemannian metric.
Moreover, $\exp(CK_3)=SU(2)$ consists of Clifford-Wolf translations
of $S^{2n-1}$ and, consequently, the $\exp(CK_3)$-orbits constitute
a totally geodesic foliation of equidistant (great) $3$-spheres  in
the sphere $S^{2n-1}$.

6) If $CK_l$ is not a Lie subalgebra of $so(2n)$ then $CK_l \cap
[CK_l,CK_l]$ is trivial, subspaces $CK_l \oplus [CK_l,CK_l]$ and
$[CK_l,CK_l]$ are Lie subalgebras of $so(2n)$ such that the pair
$(CK_l \oplus [CK_l,CK_l],[CK_l,CK_l])$ is the symmetric pair
$(so(l+1), so(l))$ and $\exp (CK_l+[CK_l,CK_l])$ is isomorphic to
$SO(l+1)$, the full connected isometry group of $\exp(CK_l)=S^l$.
\end{theorem}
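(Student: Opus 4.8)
The plan is to build the entire theorem on two pillars already established: Theorem~\ref{CW} (which characterizes $CK_l$-spaces via the anticommuting complex-structure relations \eqref{onet}--\eqref{ant}) and Lemma~\ref{triple} together with Theorem~\ref{Radsph} (which identify $\exp(CK_l)$ as a totally geodesic submanifold of $(SO(2n),\rho)$ and pin down its isometry type). First I would prove assertion~1): given a basis of $CK_l$ consisting of matrices $u_1,\dots,u_l$ satisfying \eqref{onet}--\eqref{ant}, I compute the bracket $[u_i,u_j]=u_iu_j-u_ju_i=2u_iu_j$ for $i\neq j$, and then $[[u_i,u_j],u_k]=2[u_iu_j,u_k]$. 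A short case analysis ($k\in\{i,j\}$ versus $k\notin\{i,j\}$), using $u_i^2=-I$ and anticommutativity repeatedly, shows this double bracket is either a scalar multiple of a basis vector $u_m$ or zero; hence it lies in $CK_l$. This gives $[CK_l,[CK_l,CK_l]]\subset CK_l$, i.e. $CK_l$ is a Lie triple system. Assertion~2) is then immediate from Theorem~\ref{Radsph}: $\exp(CK_l)=RS^{l|2n}$ is isometric to the unit sphere $S^l$ of constant curvature $1$, which for $l=1$ is a closed geodesic of length $2\pi$ and for $l\geq2$ is a totally geodesic round $S^l$.

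Next I would settle assertions~3)--6), which classify when $CK_l$ is itself a subalgebra. Since $\exp(CK_l)\cong S^l$ as a Riemannian manifold, if $CK_l$ is a subalgebra then $\exp(CK_l)$ is a Lie group diffeomorphic to $S^l$, forcing $l\in\{1,3\}$ (the only spheres admitting Lie group structure); this proves~3). For~4): any $CK_1=\mathbb{R}u_1$ is obviously abelian; $\exp(tu_1)=(\cos t)I+(\sin t)u_1$ as computed in the proof of Theorem~\ref{Radsph}, so $\exp(CK_1)$ is a one-parameter subgroup of the center-type form acting on $S^{2n-1}$ by $x\mapsto e^{tu_1}x$, and since $u_1$ defines a unit Killing field of constant length (Theorem~\ref{CW}), Proposition~\ref{tr3}-type reasoning — or directly Example~\ref{ecwh2} — shows these are Clifford-Wolf translations; their orbits are the integral circles of that Killing field, forming an equidistant totally geodesic foliation by great circles. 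For~5): if $CK_3$ is a subalgebra, then by assertion~1) and Lemma~\ref{triple0}, $[CK_3,CK_3]\subset CK_3$; computing $[u_i,u_j]=2u_iu_j$ shows the three products $u_1u_2,u_2u_3,u_3u_1$ span a $3$-dimensional space inside $CK_3$, so $[CK_3,CK_3]=CK_3$, whence $CK_3$ is semisimple $3$-dimensional, i.e. $\cong so(3)\cong su(2)$; then $\exp(CK_3)=S^3=SU(2)$ with bi-invariant metric by Lemma~\ref{triple}, and the Clifford-Wolf property of $\exp(CK_3)$ on $S^{2n-1}$ follows because every element of $\exp(CK_3)$ lies in $\exp(CK_1)$ for the $CK_1$ spanned by its tangent vector, reducing to~4).

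For~6): when $CK_l$ is not a subalgebra, I apply Lemma~\ref{triple} to the Lie triple system $\mathfrak{a}=CK_l$ in $\mathfrak{g}=so(2n)$. Since $\exp(CK_l)\cong S^l$ has irreducible universal cover (for $l\geq2$; the $l=1$ case is excluded because $CK_1$ is always a subalgebra), Lemma~\ref{triple} gives either Condition~1) or Condition~2). Condition~1) would make $\exp(CK_l)$ a simple compact Lie group, contradicting $\exp(CK_l)\cong S^l$ unless $l=3$ — but $l=3$ with $CK_l$ a simple subalgebra is exactly the subalgebra case~5), excluded here — so Condition~2) holds: $CK_l\cap[CK_l,CK_l]=0$ and $(CK_l\oplus[CK_l,CK_l],[CK_l,CK_l])$ is the symmetric pair of $\exp(CK_l)=S^l$. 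The symmetric pair of the round $S^l$ is $(so(l+1),so(l))$, so dimension count forces $[CK_l,CK_l]\cong so(l)$, $CK_l\oplus[CK_l,CK_l]\cong so(l+1)$, and $\exp(CK_l+[CK_l,CK_l])\cong SO(l+1)$ is the full connected isometry group of $S^l$, as claimed. The main obstacle I anticipate is the bookkeeping in assertion~1) and in the $l=3$ case: one must verify that the products $u_iu_j$ close up correctly under bracketing and correctly identify which double brackets vanish — and in~6) one must be careful to rule out Condition~1) of Lemma~\ref{triple} cleanly, i.e. to check that $\exp(CK_l)\cong S^l$ genuinely forbids $CK_l$ from being a simple Lie subalgebra except in the already-handled case.
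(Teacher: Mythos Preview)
Your proposal is correct and follows essentially the same route as the paper: the computation in 1) using $[u_i,u_j]=2u_iu_j$ and the case split on $k\in\{i,j\}$ versus $k\notin\{i,j\}$ is exactly the paper's argument (the paper finds $[U_k,[U_i,U_j]]=0$ for distinct $i,j,k$ and $[U_i,[U_i,U_j]]=-4U_j$), assertion~2) is drawn from Theorem~\ref{Radsph}, assertion~3) from the fact that $S^l$ is a Lie group only for $l\in\{1,3\}$, and assertion~6) from Lemma~\ref{triple} applied to the Lie triple system $CK_l$. The only notable difference is in 4) and~5): the paper invokes Theorem~\ref{Posit} directly to conclude that $\exp(CK_1)$ and $\exp(CK_3)$ consist of Clifford--Wolf translations of $S^{2n-1}$, whereas you argue by reducing every element of $\exp(CK_3)$ to an element of some $\exp(CK_1)$ and then appealing to the one-parameter case; both arguments are valid, and yours is slightly more self-contained while the paper's is shorter.

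One small remark on your handling of~6): ruling out Condition~1) of Lemma~\ref{triple} is more immediate than you make it---Condition~1) requires $\mathfrak{a}=CK_l$ to be a (simple) Lie subalgebra, which is excluded by the hypothesis of~6). You need not pass through the observation that $S^l$ cannot be a simple Lie group for $l\neq 3$. Also, the paper treats the case $l=3$, $CK_3$ not a subalgebra, separately (because $so(4)$ is not simple, so one lands in the non-simple branch of Condition~2) of Lemma~\ref{triple}), but your uniform statement ``the symmetric pair of $S^l$ is $(so(l+1),so(l))$'' covers it just as well.
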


\begin{remark}
Some variants of this theorem are admissible in the literature (see
e.g. \cite{Eber}).
\end{remark}

\begin{proof}
Let $CK_l$ be the linear span of  unit Killing vector fields
$U_1,\dots, U_l$. By Theorem \ref{CW} we get that $U_i^2=-I$ and
$U_iU_j+U_jU_i=0$ for $i\neq j$. Let us show that
$[CK_l,[CK_l,CK_l]]\subset CK_l$. If the indices $i,j,k$ are
pairwise distinct, then $[U_i,U_j]=2U_iU_j$ and
$$
[U_k,[U_i,U_j]]=2U_kU_iU_j-2U_iU_jU_l= -2U_iU_kU_j+2U_iU_kU_j=0.
$$
On the other hand,
$$
[U_i,[U_i,U_j]]=2U_iU_iU_j-2U_iU_jU_i= -2U_j+2U_iU_iU_j=-4U_j \in
CK_l.
$$
Therefore, $CK_l$ is a Lie triple system in $so(2n)$, that proves
the first assertion of the theorem.

The second assertion immediately follows from Theorem \ref{Radsph}.

If $CK_l$ is a Lie algebra, then necessarily $l=1$ or $l=3$, that
proves the third assertion of the theorem.

The fourth assertion immediately follows from Theorem \ref{Posit}.

If $CK_3$ is a Lie subalgebra of $so(2n)$, then $CK_3$ is isomorphic
to $so(3)\sim su(2)$. In this case $\exp(CK_3)=SU(2)=S^3$ with a
metric of constant sectional curvature $1$. Obviously, every $X\in
CK_3$ is a Killing vector field of constant length on $S^{2n-1}.$ By
Theorem \ref{Posit}, we get that $\exp(CK_3)=SU(2)$ consists of
Clifford-Wolf translations of $S^{2n-1}$. Now, it easy to see that
$\exp(CK_3)$-orbits constitute a totally geodesic foliation of
equidistant (great) $3$-spheres  in the sphere $S^{2n-1}$. This
proves the fifth assertion of the theorem.

By Lemma \ref{triple}, $CK_l +[CK_l,CK_l]$ is a Lie subalgebra of
$so(2n)$ and the subgroup $\exp (CK_l+[CK_l,CK_l])$ of $SO(2n)$ is a
connected isometry group of $M:=\exp(CK_l)=S^{l}$. If $l=3$ and
$CK_3$ is not a Lie subalgebra of $so(2n)$, then (by Lemma
\ref{triple}) the Lie algebra $CK_3+[CK_3,CK_3]=CK_3\oplus
[CK_3,CK_3]$ is isomorphic to $so(4)\sim so(3)\oplus so(3)$ and
$\exp(CK_3 \oplus [CK_3,CK_3])=SO(4)$.  Now, suppose that $l\neq 1,
3$. Then the sphere $S^l=\exp(CK_l)$ is not a Lie subgroup of
$SO(2n)$. By Lemma \ref{triple}, we get that $CK_l \cap [CK_l,CK_l]$
is trivial, the Lie algebra $CK_l \oplus [CK_l,CK_l]$ is $so(l+1)$
and $\exp(CK_l \oplus [CK_l,CK_l])=SO(l+1)$.

The theorem is completely proved.
\end{proof}

According to Theorems \ref{Radsph} and \ref{cw3}, the study of
subspaces $CK_l$ is related to the study of totally geodesic spheres
in $SO(2n)$.

Note that there are well known totally geodesic spheres in $SO(2n)$,
{\it the Helgason spheres}. In the paper \cite{HelP}, S.~Helgason
proved that every compact irreducible Riemannian symmetric space $M$
with the maximal sectional curvature $\varkappa$ contains totally
geodesic submanifolds of constant curvature $\varkappa$. Any two
such submanifolds of the same dimensions are equivalent under the
full connected isometry group of $M$. The maximal dimension of any
such submanifold is $1+m(\overline{\delta})$, where
$m(\overline{\delta})$ is the multiplicity of the highest restricted
root $\overline{\delta}$. Moreover, if $M$ is not a real projective
space, then such submanifolds of dimension $1+m(\overline{\delta})$
are actually spheres.

In the case when $M$ is a simple compact Lie group with a
bi-invariant Riemannian metric, $m(\overline{\delta})=2$. Therefore,
the maximal dimension of submanifolds as above is $3$. Note that
$M=(SO(2n),\rho)$, $n\geq 2$,  is not a real projective space, therefore, there
are $3$-dimensional totally geodesic Helgason's spheres in
$(SO(2n),\rho)$. It is easy to give a description of these spheres
(see \cite{HelP}). At first we recall some well known facts (see
e.g. $H$ of Chapter $8$ in \cite{Bes}). Let $E_{ij}$ be a $(2n\times
2n)$-matrix with zero entries except the $(i,j)$-th entry that is
$1$. Consider matrices $F_i=E_{(2i)(2i-1)}-E_{(2i-1)(2i)}$ for
$1\leq i\leq n$. These matrices define a basis of a standard Cartan
subalgebra $\mathfrak{k}$ in $so(2n)$. Hence every $X$ in
$\mathfrak{k}$ has the form $X=\sum\limits_{i=1}^n {\lambda}_i F_i$.
Every root (with respect to $\mathfrak{k}$) of $so(2n)$ has the form
$\lambda_i \pm \lambda_j$, $i\neq j$. Note that all these roots have
the same length.

Let $V_{\lambda_i \pm \lambda_j}$ be the (two-dimensional) root
space of the root $\lambda_i \pm \lambda_j$ in $so(2n)$. Put
$U_{\lambda_i \pm \lambda_j}=\mathbb{R}\cdot (F_i \pm F_j) \oplus
V_{\lambda_i \pm \lambda_j}$. In this notation, $\exp(U_{\lambda_i \pm
\lambda_j})$ is a Helgason's sphere in $(SO(2n),\rho)$
(see details in the proof of Theorem 1.2 in \cite{HelP}).
Moreover, by Theorem 1.1 in \cite{HelP} any two Helgason's spheres in $(SO(2n),\rho)$
are equivalent under the
full connected isometry group of $(SO(2n),\rho)$. Therefore, every Helgason's sphere in $(SO(2n),\rho)$
is conjugate in $SO(2n)$ either to the sphere $\exp(U_{\lambda_1 - \lambda_2})$, or
to the sphere $\exp(U_{\lambda_1 + \lambda_2})$.

\begin{pred}\label{helcl1} The spheres $S^3=\exp(CK_3)$ in
Assertion 5 of Theorem \ref{cw3} are Helgason's spheres of constant curvature $1$ in $(SO(4),\rho)$. Every
Clifford-Killing space $CK_3$ in $so(4)$ is an ideal of $so(4)$.
\end{pred}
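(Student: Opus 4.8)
The plan is to describe all Clifford-Killing spaces $CK_3\subset so(4)$ explicitly and then read off both assertions. First I would fix the ambient decomposition. Identifying $\mathbb{R}^4$ with the quaternions $\mathbb{H}$, the left multiplications $L_p\colon x\mapsto px$ and the right multiplications $R_q\colon x\mapsto xq$ by purely imaginary $p,q$ span two $3$-dimensional subspaces $\mathfrak{g}_+,\mathfrak{g}_-\subset so(4)$. Since $L_p^2=-|p|^2I$ and $L_pL_{p'}+L_{p'}L_p=0$ for $p\perp p'$ (and likewise for $R$), Theorem \ref{CW} shows that $\mathfrak{g}_+$ and $\mathfrak{g}_-$ are themselves spaces of type $CK_3$; moreover $[\mathfrak{g}_+,\mathfrak{g}_-]=0$ (left and right multiplications commute), $\mathfrak{g}_+\cap\mathfrak{g}_-=0$ and $\dim\mathfrak{g}_++\dim\mathfrak{g}_-=6=\dim so(4)$, so $so(4)=\mathfrak{g}_+\oplus\mathfrak{g}_-$ is the splitting into the two simple ideals $\mathfrak{g}_\pm\cong so(3)\sim su(2)$. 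Using the root data recalled before the proposition, I would then note that the Helgason subspaces $U_{\lambda_1-\lambda_2}$ and $U_{\lambda_1+\lambda_2}$ of $so(4)$ are $3$-dimensional simple subalgebras with $so(4)=U_{\lambda_1-\lambda_2}\oplus U_{\lambda_1+\lambda_2}$ (here one uses that the root system $D_2$ has no roots $\pm2\lambda_i$, so that $[V_{\lambda_1+\lambda_2},V_{\lambda_1-\lambda_2}]=0$); by uniqueness of the decomposition of a semisimple Lie algebra into simple ideals, $\{\mathfrak{g}_+,\mathfrak{g}_-\}=\{U_{\lambda_1-\lambda_2},U_{\lambda_1+\lambda_2}\}$.

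The central step would be to prove that every $CK_3\subset so(4)$ equals $\mathfrak{g}_+$ or $\mathfrak{g}_-$. Take an orthonormal basis $u_1,u_2,u_3$ of $CK_3$ with $u_i^2=-I$ and $u_iu_j=-u_ju_i$ ($i\neq j$) as in Theorem \ref{CW}, and write $u_i=a_i+b_i$ with $a_i\in\mathfrak{g}_+$, $b_i\in\mathfrak{g}_-$. Any element $a$ of a $CK_3$ satisfies $a^2=-(a,a)I$ (expand $a$ in a Clifford basis), so $a_i^2=-(a_i,a_i)I$ and $b_i^2=-(b_i,b_i)I$; combining this with $a_ib_i=b_ia_i$ and $(a_i,a_i)+(b_i,b_i)=(u_i,u_i)=1$ gives $-I=u_i^2=a_i^2+2a_ib_i+b_i^2=-I+2a_ib_i$, hence $a_ib_i=0$. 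As nonzero elements of $\mathfrak{g}_\pm$ are invertible (again by $a^2=-(a,a)I$), each $u_i$ lies entirely in $\mathfrak{g}_+$ or in $\mathfrak{g}_-$. If some $u_i\in\mathfrak{g}_+$ and some $u_j\in\mathfrak{g}_-$, then $u_iu_j=u_ju_i$, contradicting $u_iu_j=-u_ju_i\neq0$; thus all three $u_i$ lie in one factor, and for dimension reasons $CK_3=\mathfrak{g}_+$ or $CK_3=\mathfrak{g}_-$. Since each $\mathfrak{g}_\pm$ is an ideal of $so(4)$, this is exactly the last assertion.

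For the first assertion I would combine the two steps: $\exp(CK_3)=\exp(\mathfrak{g}_\pm)=\exp(U_{\lambda_1\mp\lambda_2})$, which is a Helgason sphere of $(SO(4),\rho)$ by the description recalled above, and since $\mathfrak{g}_+,\mathfrak{g}_-$ already represent both $SO(4)$-equivalence classes of $CK_3\subset so(4)$ (Theorem \ref{end}, as $4\equiv0\pmod{4}$), every sphere $\exp(CK_3)$ is such a Helgason sphere. Its constant sectional curvature, which is automatically the maximal sectional curvature $\varkappa$ of $(SO(4),\rho)$, equals $1$ by Assertion 5 of Theorem \ref{cw3} (alternatively, by the bi-invariant curvature formula $K=\frac{1}{4}\|[X,Y]\|^2$ inside $\mathfrak{g}_\pm$, where $\|[U_i,U_j]\|^2=\|2U_iU_j\|^2=4$, together with the bound $\|[X,Y]\|^2=\|[X_+,Y_+]\|^2+\|[X_-,Y_-]\|^2\le4$ for orthonormal $X,Y$). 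The one point requiring care is that Helgason's theorem \cite{HelP} is stated for irreducible symmetric spaces while $(SO(4),\rho)$ is reducible — up to a two-fold covering it is the Riemannian product $S^3\times S^3$ of unit round $3$-spheres; I would circumvent this either by using the explicit root-theoretic description of the Helgason spheres $\exp(U_{\lambda_1\pm\lambda_2})$ (valid for all $SO(2n)$, $n\ge2$, as recalled before the proposition), or, equivalently, by observing that the Helgason spheres of $S^3\times S^3$ are precisely its two spherical factors, which are again the subgroups $\exp(\mathfrak{g}_\pm)=\exp(CK_3)$.
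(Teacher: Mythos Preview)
Your proof is correct and takes a genuinely different route from the paper's. The paper argues indirectly: it first verifies that $U_{\lambda_1\pm\lambda_2}$ are $CK_3$-type subalgebras (hence their exponentials are simultaneously Helgason spheres and Radon spheres of curvature $1$), then for an \emph{arbitrary} $CK_3$ it invokes Theorem~\ref{Radsph} to get a totally geodesic $S^3$ of curvature $1$, concludes that this curvature is the maximal one and hence that $\exp(CK_3)$ is a Helgason sphere, and finally uses Helgason's transitivity theorem \cite{HelP} to conjugate $CK_3$ onto one of the two ideals $U_{\lambda_1\pm\lambda_2}$.

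You instead classify all $CK_3\subset so(4)$ directly by an elementary Clifford-algebra computation in the quaternionic model, showing outright that each $CK_3$ equals one of the two simple ideals $\mathfrak{g}_\pm$. This is cleaner in two respects. First, it avoids appealing to Helgason's conjugacy result, which is formulated for \emph{irreducible} symmetric spaces and therefore does not literally apply to $(SO(4),\rho)$; you are right to flag this, and your explicit classification sidesteps the issue entirely (the paper glosses over it). Second, you obtain the stronger statement ``every $CK_3$ \emph{equals} $\mathfrak{g}_+$ or $\mathfrak{g}_-$'' rather than ``is conjugate to one of them,'' so the ideal property is immediate without an extra conjugation argument. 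The paper's approach, on the other hand, is more in keeping with the surrounding discussion of Helgason spheres in $SO(2n)$ and sets up the curvature comparison used in Proposition~\ref{helcl2}. One small remark: the constant curvature $1$ really comes from Assertion~2 (equivalently Theorem~\ref{Radsph}), not Assertion~5, of Theorem~\ref{cw3}; your alternative computation via $K=\tfrac14\|[X,Y]\|^2$ is fine.
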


\begin{proof}
Note that $so(4)\cong so(3)\oplus so(3)$. There are only two
non-proportional roots for the standard Cartan algebra
$\mathfrak{k}$: $\lambda_1 + \lambda_2$ and $\lambda_1 - \lambda_2$.
It is easy to see that $U_{\lambda_1 + \lambda_2}$ and $U_{\lambda_1 - \lambda_2}$ are pairwise
commuting Lie algebras isomorphic to $so(3)\sim su(2)$.  In particular, $\exp(U_{\lambda_1 \pm \lambda_2})$
are spheres of the type $\exp(CK_3)$ in $(SO(4), \rho)$
(see Assertion 5 in Theorem \ref{cw3}).
Note also, that $U_{\lambda_1 + \lambda_2}$ and $U_{\lambda_1 - \lambda_2}$
could be naturally identified with the Lie algebras of left and
right shifts on $S^3=SU(2)$.
On the other hand,
(from the previous discussion) $\exp(U_{\lambda_1 \pm \lambda_2})$
are Helgason's spheres $(SO(4), \rho)$. By Assertion 2 in Theorem \ref{cw3} (or by Theorem \ref{Radsph})
these spheres have constant curvature $1$. This proves the first assertion of the proposition.

Now, let $CK_3$ be an arbitrary Clifford-Killing space
of $so(4)$. By Theorem \ref{Radsph},
$\exp(CK_3)$ is a sphere $S^3$ of constant curvature $1$.
Since its sectional curvature coincides with the sectional curvature of
Helgason's spheres $\exp(U_{\lambda_1 \pm \lambda_2})$, then
$\exp(CK_3)$ should be a Helgason's sphere too. From the description
of Helgason's spheres right before the statement of the proposition we get, that $\exp(CK_3)$ is
conjugate in $SO(4)$ either to $\exp(U_{\lambda_1 - \lambda_2})$, or to $\exp(U_{\lambda_1 + \lambda_2})$.
In particular, $CK_3$
is conjugate in $SO(4)$ either to $U_{\lambda_1 - \lambda_2}$, or to $U_{\lambda_1 + \lambda_2}$.
Since $U_{\lambda_1 \pm \lambda_2}$ are ideals in $so(4)$, this proves the second assertion of the proposition.
\end{proof}

\begin{pred}\label{helcl2}
Every Helgason's sphere in $(SO(2n),\rho)$, $n\geq 2$, has the
constant sectional curvature $n/2$. In particular, for $n\geq 3$ all
Helgason's spheres are distinct from the spheres in Theorem
\ref{Radsph}.
\end{pred}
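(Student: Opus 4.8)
The plan is to combine the explicit description of Helgason's spheres recalled just above with the standard formula for the sectional curvature of a bi-invariant metric. Recall that on a compact Lie group $G$ with bi-invariant metric coming from an $\Ad(G)$-invariant inner product $(\cdot,\cdot)$, the sectional curvature of the $2$-plane spanned by $(\cdot,\cdot)$-orthogonal vectors $X,Y$ in the Lie algebra equals $([X,Y],[X,Y])/\bigl(4(X,X)(Y,Y)\bigr)$. Every Helgason sphere in $(SO(2n),\rho)$ is conjugate in $SO(2n)$, hence (by bi-invariance of $\rho$) isometric, to $\exp(U_{\lambda_1-\lambda_2})$ or to $\exp(U_{\lambda_1+\lambda_2})$; each of these submanifolds is totally geodesic and of constant curvature (by Helgason's theorem \cite{HelP}, or simply because each $U_{\lambda_1\pm\lambda_2}$ is a subalgebra isomorphic to $so(3)$, so that $\exp(U_{\lambda_1\pm\lambda_2})$ carries an induced bi-invariant metric). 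It therefore suffices to evaluate the curvature on one tangent $2$-plane of $\exp(U_{\lambda_1-\lambda_2})$.

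Next I would make $U_{\lambda_1-\lambda_2}$ completely explicit. Writing $L_{ij}=E_{ij}-E_{ji}$, one has $F_1-F_2=-(L_{12}-L_{34})$, so $F_1-F_2$ lies in the copy of $so(4)$ acting on the coordinates $1,2,3,4$; since the root space $V_{\lambda_1-\lambda_2}$ lies in that $so(4)$ as well and $U_{\lambda_1-\lambda_2}$ is an $so(3)$-subalgebra, $U_{\lambda_1-\lambda_2}$ must be the anti-self-dual $so(3)\subset so(4)$, that is, the linear span of $A_1=L_{12}-L_{34}$, $A_2=L_{13}+L_{24}$, $A_3=L_{14}-L_{23}$. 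A short bracket computation gives $[A_1,A_2]=2A_3$, and from the inner product (\ref{GK}) one obtains $(A_i,A_j)=-\tfrac1{2n}\trace(A_iA_j)=\tfrac2n\,\delta_{ij}$; hence $A_1\perp A_2$, $(A_1,A_1)=(A_2,A_2)=2/n$ and $([A_1,A_2],[A_1,A_2])=4(A_3,A_3)=8/n$. Substituting into the curvature formula gives $K(A_1,A_2)=(8/n)/\bigl(4(2/n)^2\bigr)=n/2$. By constancy of curvature on the Helgason sphere, every Helgason sphere in $(SO(2n),\rho)$ has constant sectional curvature $n/2$.

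For the second assertion I would invoke Theorem \ref{Radsph}: every Radon sphere $RS^{l|2n}$ is isometric to the standard unit sphere, hence has constant sectional curvature $1$. Since $n/2>1$ whenever $n\geq 3$, a Helgason sphere then has constant curvature $n/2\neq 1$, and so cannot coincide with --- indeed is not even isometric to --- any $RS^{l|2n}$; this proves the ``in particular'' statement.

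The computation itself is routine; the part to watch is the bookkeeping. One must correctly identify $U_{\lambda_1-\lambda_2}$ as the anti-self-dual $so(3)$ inside the first $so(4)$-block, get the bracket relation $[A_1,A_2]=2A_3$ right (including its sign), and keep track of the normalization $-\tfrac1{2n}\trace$ in (\ref{GK}), which makes $(A_i,A_i)=2/n$ rather than $1$. A convenient consistency check is that the same computation applied to a Clifford-Killing space $CK_l$ --- whose generators $u_i$ satisfy $u_i^2=-I$ on all of $\mathbb{R}^{2n}$, so that $(u_i,u_i)=1$, $u_i\perp u_j$, and $[u_i,u_j]=2u_iu_j$ with $(u_iu_j,u_iu_j)=1$ --- gives sectional curvature $1$, in agreement with Theorem \ref{Radsph}; the difference from the Helgason case is precisely that $A_1^2$ equals $-I$ on only four of the $2n$ coordinates.
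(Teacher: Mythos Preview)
Your proof is correct. Both your argument and the paper's rest on the same initial observation: the Helgason sphere $\exp(U_{\lambda_1-\lambda_2})$ sits inside the first $SO(4)$-block $\diag(SO(4),1,\dots,1)\subset SO(2n)$. From there, however, the two proofs diverge. You make $U_{\lambda_1-\lambda_2}$ explicit as the anti-self-dual $so(3)$ spanned by $A_1,A_2,A_3$, compute $[A_1,A_2]=2A_3$ and $(A_i,A_i)=2/n$ directly from the inner product (\ref{GK}), and plug into the bi-invariant curvature formula $K=\tfrac14\|[X,Y]\|^2/(\|X\|^2\|Y\|^2)$ to get $n/2$. The paper instead argues by scaling: the induced metric $\rho_1$ on the $SO(4)$-block satisfies $(H,\tfrac{n}{2}\rho_1)\cong(SO(4),\rho')$, where $\rho'$ is the metric (\ref{GK}) for $n=2$; then Proposition~\ref{helcl1} (Helgason spheres in $(SO(4),\rho')$ have curvature $1$) and the usual curvature-under-scaling rule give $n/2$.

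Your route is entirely self-contained and avoids invoking Proposition~\ref{helcl1}; the trade-off is the matrix bookkeeping you flag at the end. The paper's route is shorter once Proposition~\ref{helcl1} is in hand and makes the reason for the factor $n/2$ transparent (it is exactly the scaling factor between the two normalizations of $\trace$). Your consistency check against the $CK_l$ case is a nice touch and effectively reproves the curvature statement in Theorem~\ref{Radsph} by the same method.
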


\begin{proof}
For $n=2$  the assertion of the proposition follows from Proposition
\ref{helcl1}. Now, let us consider the case $n
\geq 3$. The subgroup $H=\diag(SO(4),1,\dots, 1)\subset SO(2n)$ with
a Riemannian bi-invariant metric $\rho_1$ induced by $\rho$, is a
totally geodesic submanifold in $(SO(2n),\rho)$. On the other hand,
all roots of $\mathfrak{h}={\rm Lie}(H)$ are roots of $so(2n)$. From the
above description of the Helgason spheres we get that every
Helgason's sphere in $(H,\rho_1)$ is also a Helgason's sphere in
$(SO(2n),\rho)$. It is easy to see that $(H, \frac{n}{2} \rho_1)$ is
isometric to $(SO(4),\rho^{\prime})$, where $\rho^{\prime}$ is a bi-invariant Riemannian metric, generated
by the inner product (\ref{GK}) for $n=2$. Since
all Helgason's spheres in $(SO(4),\rho^{\prime})$ has constant curvature $1$ by Proposition
\ref{helcl1},
then every Helgason's sphere in $(H,\rho_1)$ has constant curvature
$n/2$.
\end{proof}

\section{Lie algebras in Clifford-Killing spaces on $S^{2n-1}$}

Below we discuss some results related to Lie algebras containing in
Clifford-Killing subspaces $CK_l$ of $so(2n)$.

\begin{pred}\label{Lie}
Let $X,Y$ be linearly independent Killing vector fields on
$S^{2n-1}$ with constant inner products $\can(X,X)$, $\can(Y,Y)$,
$\can(X,Y)$. Then the Lie bracket $[X,Y]$ is a non-trivial Killing
vector field of constant length on $S^{n-1}$. If $X,Y$ are unit
mutually orthogonal Killing vector fields on $S^{2n-1}$, then the
triple of vector fields
$$\{X,Y,Z:=\frac{1}{2}[X,Y]\}$$
constitutes an orthonormal basis of Lie algebra $CK_{3}$ of vector
fields on $S^{2n-1}$ with relations $[X,Y]=2Z$, $[Z,X]=2Y$,
$[Y,Z]=2X$. Consequently, Assertion 5) of Theorem \ref{cw3} holds.
\end{pred}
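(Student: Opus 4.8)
The plan is to build everything around the known Clifford-algebra relations for unit mutually orthogonal Killing vector fields on $S^{2n-1}$, obtained from Theorem \ref{CW}, and to compute brackets directly in terms of matrix products. First I would reduce to the matrix picture: by Theorem \ref{CW} (or Theorem \ref{max}) a Killing vector field on $S^{2n-1}$ is given by $x\mapsto ux$ for some $u\in so(2n)$, and the condition that $\can(X,X)$, $\can(Y,Y)$, $\can(X,Y)$ are constant translates, via Corollary \ref{nabla1} together with the computation in the proof of Theorem \ref{CW}, into algebraic conditions on the corresponding matrices $u,v$ — in particular the symmetric part of $uv$ vanishes, so the bracket $[u,v]=uv-vu=2uv$ (after subtracting its symmetric part) again lies in $so(2n)$ and defines a genuine Killing field. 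To see it is of constant length, note that constancy of all three inner products means $\{X,Y\}$ spans a $CK$-space (or can be completed to one after orthonormalization), and Proposition \ref{CKvs} plus the identity $\can([X,Y],[X,Y])$ being expressible through the matrix trace form (\ref{GK}) gives the claim; nontriviality follows since $X,Y$ are linearly independent, so $uv\neq vu$ (if $uv=vu$ then, using $u^2,v^2$ scalar, one forces linear dependence).

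Next, for the orthonormal case I would normalize: $\can(X,X)=\can(Y,Y)=1$, $\can(X,Y)=0$ give, via Theorem \ref{CW}, $u^2=v^2=-I$ and $uv+vu=0$. Then set $w:=uv$, so $Z(x)=\tfrac12[X,Y](x)=\tfrac12(uv-vu)x=uvx=wx$. One checks $w\in so(2n)$ (it is skew since $(uv)^t=v^tu^t=(-v)(-u)=vu=-uv$), $w^2=uvuv=-u u v v=-(-I)(-I)=-I$, and the anticommutation $uw+wu=u\cdot uv+uv\cdot u=-v+u v u=-v-u u v\cdot(-1)$— more cleanly, $uw=u(uv)=-v$ and $wu=(uv)u=-uuv\cdot? $ — I would simply record $uw=-v$, $wu=v$, hence $uw+wu=0$, and similarly $vw=u$, $wv=-u$. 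These are exactly the $Cl_2$-relations, so $\{X,Y,Z\}$ spans a $CK_3$, and computing the brackets: $[X,Y]x=2uvx=2Zx$, $[Z,X]x=(wu-uw)x=2vx=2Yx$, $[Y,Z]x=(vw-wv)x=2ux=2Xx$. This shows $CK_3=\Lin\{X,Y,Z\}$ is closed under bracket, hence a Lie subalgebra of $so(2n)$ isomorphic to $so(3)\sim su(2)$ (the relations are those of $su(2)$ up to the factor $2$).

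Finally, to get the last sentence — "Assertion 5) of Theorem \ref{cw3} holds" — I would simply invoke that assertion directly: having exhibited $CK_3$ as a Lie subalgebra of $so(2n)$ with $CK_3=[CK_3,CK_3]\cong su(2)$, Theorem \ref{cw3}(5) applies verbatim, giving $\exp(CK_3)=SU(2)=S^3$ with a bi-invariant metric, consisting of Clifford-Wolf translations of $S^{2n-1}$, whose orbits foliate $S^{2n-1}$ by equidistant great $3$-spheres. The only genuinely nonroutine point is the first one: verifying that mere constancy of the three inner products $\can(X,X),\can(Y,Y),\can(X,Y)$ — without assuming orthonormality — already forces $[X,Y]$ to have constant length. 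I expect the main obstacle to be handling this non-normalized case cleanly; the trick is to observe that a two-dimensional space with constant Gram matrix is a $CK$-space by Proposition \ref{CKvs}, so one may pick an orthonormal basis $X',Y'$ of it with $[X',Y']$ proportional (by bilinearity and the earlier relations) to $[X,Y]$ up to a nonzero scalar, reducing to the normalized computation above.
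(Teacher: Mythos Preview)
Your approach is correct, but it differs from the paper's in the first (and hardest) part. You work purely at the level of matrices in $so(2n)$: translate the constancy of the three inner products into the relations $u^2=-\alpha I$, $v^2=-\beta I$, $uv+vu=-2cI$, and then reduce to the orthonormal case via a change of basis using Proposition \ref{CKvs} (since $[X',Y']=(ad-bc)[X,Y]$ for any linear change of basis). This is valid, but your intermediate remark that ``the symmetric part of $uv$ vanishes, so $[u,v]=2uv$'' only holds in the orthogonal case $c=0$; in general the symmetric part of $uv$ is $-cI$, which is precisely why the reduction step is needed.

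The paper instead dispatches the first assertion with a single Riemannian identity: by Corollary \ref{nabla1} one has $\tfrac12[X,Y]=\nabla_XY$, and then Proposition \ref{criv2} (applied with $Z=Y$) gives
\[
\can(\nabla_XY,\nabla_XY)=\can(R(X,Y)Y,X).
\]
Since $S^{2n-1}$ has constant sectional curvature $1$, the right-hand side equals $\can(X,X)\can(Y,Y)-\can(X,Y)^2$, which is constant by hypothesis and strictly positive by linear independence. This simultaneously yields constant length and nontriviality, with no normalization and no matrix computation. For the second part (orthonormality of $\{X,Y,Z\}$ and the bracket relations) the paper does essentially what you do: it uses $u^2=v^2=-I$ and $uv=-vu$ from Theorem \ref{CW} for the brackets, together with $0=Y\cdot\can(X,X)=-\can([X,Y],X)$ and $0=X\cdot\can(Y,Y)=\can([X,Y],Y)$ for the orthogonality of $Z$ to $X$ and $Y$. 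So the two proofs converge there; the real methodological difference is that the paper's curvature argument for the first part is basis-free and would work on any space form, whereas your matrix route is specific to the embedding $S^{2n-1}\subset\mathbb{R}^{2n}$.
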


\begin{proof}
By Corollary \ref{nabla1} we get $\frac{1}{2}[X,Y]=\nabla_X Y
=-\nabla_Y X$, and by Proposition \ref{criv2}, the formula
$\can(\nabla_X Y, \nabla_X Y)= \can(R(X,Y)Y,X)$ holds. Since
$\can(X,Y)$ and the sectional curvature of $(S^{n-1},\can)$ are
constant, the expression $\can(R(X,Y)Y,X)$ is a positive constant.
Consequently, $[X,Y]$ is a (non-trivial) Killing field of constant
length, that proves the first assertion of the proposition.

The orthonormality of the triple of Killing vector fields
$\{X,Y,Z\}$ on $S^{2n-1}$ follows from the orthonormality of the
pair $\{X,Y\},$ the proof of the first part of Proposition, and
relations $0=Y\can(X,X)=-\can([X,Y],X)$, $0=X\can(Y,Y)=\can([X,Y],Y)$.
The first commutation relation follows from the definition. On the
ground of Theorem \ref{CW}, we get the equality:
$$[Z,X]=\frac{1}{2}\{(XY-YX)X-X(XY-YX)\}=$$
$$\frac{1}{2}\{XYX-YXX-XXY+XYX\}=\frac{1}{2}(2Y+2Y)=2Y.
$$
The third commutation relation can be proved analogously. Then the
linear span $CK_{3}$ of vectors $X,Y,Z$ is a Lie algebra. Now, it
suffices to apply Theorem \ref{cw3}.
\end{proof}

\begin{pred}
\label{no} If $l\geq 4$, then the space $CK_{l}$ contains no Lie
subalgebra of dimension $\geq 2$.
\end{pred}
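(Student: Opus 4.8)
The plan is to argue by contradiction. Suppose $\mathfrak{a}\subseteq CK_l$ is a Lie subalgebra with $\dim\mathfrak{a}\geq 2$; I will show that this forces a copy of $\mathfrak{su}(2)$ inside $CK_l$ which, together with the hypothesis $l\geq 4$, produces a numerical contradiction through a short Clifford-algebra computation.

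First I would pick two linearly independent elements of $\mathfrak{a}$ and orthonormalize them inside their $2$-plane; the resulting vectors $X,Y$ stay in $\mathfrak{a}\subseteq CK_l$, and by Proposition \ref{CKvs} (all inner products among elements of a CK-space are constant) $X$ and $Y$ are mutually orthogonal unit Killing fields on $S^{2n-1}$. By Proposition \ref{Lie}, the triple $\{X,Y,Z:=\frac{1}{2}[X,Y]\}$ is then an orthonormal triple spanning a Lie subalgebra isomorphic to $\mathfrak{su}(2)$, with $[X,Y]=2Z$, $[Z,X]=2Y$, $[Y,Z]=2X$. The point that must be checked carefully here is that $Z$ actually lies in $CK_l$: this holds because $\mathfrak{a}$ is closed under the bracket, so $Z=\frac{1}{2}[X,Y]\in\mathfrak{a}\subseteq CK_l$. (This uses the subalgebra hypothesis on $\mathfrak{a}$, not merely the weaker Lie-triple-system property that $CK_l$ itself enjoys.) Applying Theorem \ref{CW} to the pairwise orthogonal unit Killing fields $X,Y,Z$, the corresponding matrices satisfy $X^2=Y^2=Z^2=-I$, pairwise anticommute, and $Z=XY$.

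Next I would use $l\geq 4$. Since $\dim CK_l=l\geq 4$, the orthogonal complement (with respect to $\can$) of the span of $X,Y,Z$ inside $CK_l$ is nonzero, so it contains a unit vector $W$. Then $\{X,Y,Z,W\}$ is a collection of four pairwise orthogonal unit Killing fields on $S^{2n-1}$, and Theorem \ref{CW} gives $W^2=-I$ together with $WX=-XW$, $WY=-YW$, $WZ=-ZW$. On the other hand, a direct computation gives $WZ=W(XY)=(WX)Y=-X(WY)=XYW=ZW$. Comparing, $WZ=-WZ$, hence $WZ=0$; but this is impossible, since $W^2=-I$ makes $W$ invertible while $\can(Z,Z)=1$ makes $Z$ nonzero. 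This contradiction proves that no such $\mathfrak{a}$ exists.

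I expect the only genuinely delicate points to be the two flagged above: that the bracket closure of $\mathfrak{a}$ (as opposed to the Lie-triple property of $CK_l$) is what returns $Z$ to $CK_l$, and that $l\geq 4$ is precisely what supplies the extra anticommuting unit $W$ — for $l=3$ no such $W$ exists, and indeed $CK_3$ may itself be a subalgebra. Everything else is immediate from Theorem \ref{CW}. As a side remark, the same argument shows that for any $l$ a subalgebra of $CK_l$ of dimension $\geq 2$ must contain an $\mathfrak{su}(2)$; in particular $CK_l$ never contains a $2$-dimensional subalgebra, which fits with Assertion 3 of Theorem \ref{cw3}.
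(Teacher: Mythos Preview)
Your argument is correct, and it takes a genuinely different route from the paper. The paper's proof is a one-line appeal to Theorem~\ref{cw3}: for $l\geq 4$ the space $CK_l$ is not itself a subalgebra (Assertion~3), so by Assertion~6) the intersection $CK_l\cap[CK_l,CK_l]$ is trivial; hence any subalgebra $\mathfrak{a}\subseteq CK_l$ has $[\mathfrak{a},\mathfrak{a}]\subseteq CK_l\cap[CK_l,CK_l]=0$ and is forced to be abelian; finally a two-dimensional abelian subspace of $CK_l$ is ruled out because $\exp(CK_l)=S^l$ is a CROSS (rank one). Your approach sidesteps all of the symmetric-space machinery behind Theorem~\ref{cw3} and Lemma~\ref{triple}: once you have $Z=\tfrac12[X,Y]=XY$ inside $CK_l$ (using anticommutation of $X,Y$ from Theorem~\ref{CW}), a fourth anticommuting unit $W$ yields $WZ=W(XY)=XYW=ZW=-WZ$, a pure Clifford-algebra contradiction. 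The paper's argument situates the proposition within the structural picture it has developed; yours is more elementary and entirely self-contained, needing only Proposition~\ref{Lie} and Theorem~\ref{CW}.
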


\begin{proof} This proposition immediately follows from Theorem
\ref{cw3}. Indeed, in this case the intersection $CK_l \cap
[CK_l,CK_l]$ is trivial, while $CK_l$ contains no two-dimensional
commutative Lie subalgebra, because $S^{l}$ is a CROSS.
\end{proof}

\begin{corollary}
A sphere $S^{2n-1}$ admits the space $CK_{\rho(2n)-1}$, which is a
Lie algebra if and only if $\rho(2n)=2$ or $\rho(2n)=4$, i. e. when
$8$ doesn't divide $2n$.
\end{corollary}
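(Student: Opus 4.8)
The plan is to combine three facts already available: the existence criterion for Clifford-Killing spaces (Theorem \ref{uni}), which says that $S^{2n-1}$ carries a $CK_l$ exactly when $1\le l\le\rho(2n)-1$, so that the top-dimensional such space has dimension $l:=\rho(2n)-1$; the absence of low-dimensional Lie subalgebras inside large $CK_l$ (Proposition \ref{no}); and the arithmetic of the Radon-Hurwitz function (Theorem \ref{Rad}). I would first record the numerical side. Writing $2n=2^{4\alpha+\beta}n'$ with $n'$ odd, $\beta\in\{0,1,2,3\}$ and $\alpha\ge0$, one has $\rho(2n)=8\alpha+2^{\beta}$, and since $2n$ is even we get $4\alpha+\beta\ge1$, hence $\rho(2n)\ge2$; moreover $\rho(2n)$ is never equal to $3$, since $3$ is not of the form $8\alpha+2^{\beta}$. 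Consequently exactly one of two cases occurs: either $\alpha=0$ and $\beta\in\{1,2\}$, equivalently $8\nmid 2n$, equivalently $\rho(2n)\in\{2,4\}$; or $8\mid 2n$, in which case $\rho(2n)=8\alpha+2^{\beta}\ge8$, in particular $\rho(2n)\ge5$. This already verifies the parenthetical reformulation ``$8$ doesn't divide $2n$'' in the statement.

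For the ``only if'' part, suppose $CK_{l}$ with $l=\rho(2n)-1$ is a Lie subalgebra of $so(2n)$. If $l\ge4$, Proposition \ref{no} asserts that $CK_{l}$ contains no Lie subalgebra of dimension $\ge2$, contradicting the fact that $CK_l$ itself is one. Hence $l\le3$, i.e. $\rho(2n)\le4$; combined with $\rho(2n)\ge2$ and $\rho(2n)\ne3$ from the previous paragraph this gives $\rho(2n)\in\{2,4\}$.

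For the ``if'' part, assume $\rho(2n)\in\{2,4\}$. If $\rho(2n)=2$, then $l=1$, and any one-dimensional subspace of $so(2n)$ is automatically a (commutative) Lie subalgebra, so $CK_{1}=CK_{\rho(2n)-1}$ works (cf. Assertion 4) of Theorem \ref{cw3}). If $\rho(2n)=4$, then $l=3$; since $\rho(2n)-1=3\ge2$, the sphere $S^{2n-1}$ carries a $CK_2$ by Theorem \ref{uni}, and taking an orthonormal basis $X,Y$ of such a $CK_2$, Proposition \ref{Lie} shows that $\{X,Y,\frac{1}{2}[X,Y]\}$ is an orthonormal basis of a three-dimensional Clifford-Killing space that is a Lie subalgebra isomorphic to $su(2)$ (equivalently, one may take the linear span of $u_1,u_2,u_1u_2$ as in the Example of Section \ref{struc}). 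Thus $S^{2n-1}$ admits a $CK_{\rho(2n)-1}$ which is a Lie algebra.

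The one place that needs genuine attention is the case $\rho(2n)=4$ of the ``if'' direction: for larger $n$ there are inequivalent $CK_3$-spaces, and not all of them need be Lie subalgebras, so one must produce a specific Lie-subalgebra example rather than argue abstractly — which is exactly the content of the closure construction in Proposition \ref{Lie}. Everything else is routine bookkeeping with dimensions and with the Radon-Hurwitz formula.
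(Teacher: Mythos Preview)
Your proof is correct and follows essentially the same approach as the paper: both use Proposition~\ref{no} to rule out $l=\rho(2n)-1\ge4$, Proposition~\ref{Lie} to construct a Lie algebra $CK_3$ when $\rho(2n)=4$, and the trivial observation that any $CK_1$ is a Lie algebra when $\rho(2n)=2$. Your version is slightly more explicit about the Radon--Hurwitz arithmetic (in particular the observation that $\rho(2n)\neq 3$), whereas the paper phrases the case split directly in terms of divisibility of $2n$ by $4$ and $8$, but the content is identical.
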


\begin{proof}
If $8$ divide $2n$, then by Theorem \ref{ro}, the dimension of the
space $CK_{\rho(2n)-1}$ is more than $4$, and on the ground of
Proposition \ref{no}, this space cannot be a Lie algebra.

If $4$ divide $2n$, but $8$ doesn't divide $2n$, then $\rho(2n)-1=3$
and by Proposition \ref{Lie}, as the space $CK_{3}$ one can take the
Lie algebra with the basis $X,Y,Z,$ indicated there.

At the end, if $4$ doesn't divide $2n$, then $\rho(2n)-1=1,$ and any
space $CK_{1}$ is a Lie algebra.
\end{proof}

Propositions \ref{Lie}, \ref{no},  and Theorem \ref{ro} immediately
imply

\begin{corollary}
\label{11} If $8$ divide $2n$, then there is a space $CK_{3}$ that
is a Lie algebra (isomorphic to $su(2)$) which is not contained in
any space $CK_{\rho(2n)-1}$.
\end{corollary}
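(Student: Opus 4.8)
The plan is to combine a counting/dimension argument with the structure results already established. First I would recall from Theorem \ref{ro} that if $8$ divides $2n$, then the maximal dimension of a Clifford-Killing space on $S^{2n-1}$ satisfies $\rho(2n)-1 \geq 7 > 3$, so every maximal space $CK_{\rho(2n)-1}$ has dimension at least $7$, and in particular dimension $\geq 4$. Next, by Proposition \ref{Lie}, whenever we are given two unit mutually orthogonal Killing vector fields $X,Y$ on $S^{2n-1}$, the triple $\{X,Y,Z:=\frac{1}{2}[X,Y]\}$ spans a three-dimensional Lie subalgebra $CK_3\subset so(2n)$ isomorphic to $su(2)$. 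Since $8\mid 2n$ forces $\rho(2n)-1\geq 2$ (indeed $\geq 7$), there do exist at least two such orthonormal Killing fields inside any $CK_{\rho(2n)-1}$, so we can manufacture a $CK_3$ that is a Lie algebra.

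The key remaining point is to produce such a $CK_3$ that is \emph{not} contained in any maximal $CK_{\rho(2n)-1}$. Here I would invoke Proposition \ref{no}: for $l=\rho(2n)-1\geq 4$, the space $CK_l$ contains no Lie subalgebra of dimension $\geq 2$, because by Theorem \ref{cw3} the intersection $CK_l\cap[CK_l,CK_l]$ is trivial and $S^l$, being a CROSS, carries no totally geodesic flat $2$-torus, hence $CK_l$ has no $2$-dimensional commutative subalgebra and (by the classification in Theorem \ref{cw3}) no $3$-dimensional $su(2)$ either. So a $CK_3$ which happens to be a Lie subalgebra of $so(2n)$ can never sit inside any $CK_{\rho(2n)-1}$ when $\rho(2n)-1\geq 4$. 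It only remains to note that the hypothesis $8\mid 2n$ gives exactly $\rho(2n)-1\geq 7$, hence $\geq 4$, so Proposition \ref{no} applies to every maximal space.

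Putting these together: choose $X,Y$ among the generators of some $CK_{\rho(2n)-1}$ (they exist since the dimension is at least $7$), form the Lie algebra $CK_3=\mathrm{span}\{X,Y,\frac{1}{2}[X,Y]\}$ via Proposition \ref{Lie}; this is a genuine $CK$-space which is a Lie algebra isomorphic to $su(2)$, and by Proposition \ref{no} it is not contained in any $CK_{\rho(2n)-1}$. I expect the only subtlety to be the verification that $X,Y$ can genuinely be taken to be mutually orthogonal unit Killing fields spanning part of a maximal $CK$-space—but this is immediate from Proposition \ref{CKvs}, which guarantees that every $CK_l$ admits an orthonormal basis of unit Killing fields, so no real obstacle arises.
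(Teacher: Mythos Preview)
Your proposal is correct and follows essentially the same route as the paper, which simply states that the corollary is an immediate consequence of Propositions \ref{Lie}, \ref{no}, and Theorem \ref{ro}. You have spelled out precisely the intended argument: $8\mid 2n$ forces $\rho(2n)-1\geq 7\geq 4$, Proposition \ref{Lie} produces a $CK_3$ that is a Lie algebra isomorphic to $su(2)$, and Proposition \ref{no} prevents any such Lie subalgebra from sitting inside a $CK_{\rho(2n)-1}$.
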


Propositions  \ref{Lie} and \ref{no} imply

\begin{corollary}
\label{11n} 1) If $4$ divide $2n$, then there is a space
$CK_{3}\subset so(2n)$, which is a Lie algebra, isomorphic to
$so(3).$

2) If $8$ divide $2n$, then there exists a space $CK_{3}\subset
so(2n)$, which is not a Lie algebra.
\end{corollary}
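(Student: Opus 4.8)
The plan is to derive both assertions from the existence of low-dimensional Clifford-Killing spaces on $S^{2n-1}$, whose admissible dimensions are governed by the Radon--Hurwitz function (Theorems \ref{Rad} and \ref{uni}), and then to feed these spaces into Propositions \ref{Lie} and \ref{no} respectively.

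For assertion 1), I would first record the arithmetic fact that $4\mid 2n$ forces $\rho(2n)\geq 4$: writing $2n=2^{4\alpha+\beta}m'$ with $m'$ odd and $\beta\in\{0,1,2,3\}$, the hypothesis means $4\alpha+\beta\geq 2$, and Theorem \ref{Rad} gives $\rho(2n)=8\alpha+2^{\beta}$, which equals $4$ or $8$ when $\alpha=0$ and is $\geq 9$ when $\alpha\geq 1$. Thus $1\leq 2\leq \rho(2n)-1$, so by Theorem \ref{uni} the sphere $S^{2n-1}$ admits a space $CK_2$, i.e.\ a pair $X,Y$ of mutually orthogonal unit Killing vector fields. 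Proposition \ref{Lie} then produces the space $CK_3=\Lin\{X,Y,\tfrac12[X,Y]\}$, which is a Lie algebra with the bracket relations of $so(3)\cong su(2)$; this is the desired $CK_3$.

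For assertion 2), the analogous step is that $8\mid 2n$ forces $\rho(2n)\geq 8$: now $4\alpha+\beta\geq 3$, so by Theorem \ref{Rad} either $\alpha=0$ and $\beta=3$ (whence $\rho(2n)=8$) or $\alpha\geq 1$ (whence $\rho(2n)=8\alpha+2^{\beta}\geq 9$). Hence $1\leq 4\leq \rho(2n)-1$, and Theorem \ref{uni} provides a space $CK_4\subset so(2n)$, spanned by mutually orthogonal unit Killing vector fields $U_1,U_2,U_3,U_4$. I would then consider $CK_3:=\Lin\{U_1,U_2,U_3\}\subset CK_4$, which is again a Clifford-Killing space by Proposition \ref{CKvs}. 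Were this $CK_3$ a Lie algebra, it would be a Lie subalgebra of dimension $3\geq 2$ contained in $CK_4$, contradicting Proposition \ref{no}; hence $CK_3$ is not a Lie algebra.

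There is no serious obstacle here: the proof is a short combination of results already established. The only point needing a little care is the Radon--Hurwitz bookkeeping, and in particular the observation that it is precisely the threshold $\rho(2n)\geq 5$ --- equivalent to $8\mid 2n$ --- that makes a space $CK_4$ available and so puts Proposition \ref{no} into play (for $2n$ divisible by $4$ but not by $8$ one has $\rho(2n)-1=3$, and this route is unavailable).
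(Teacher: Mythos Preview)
Your proposal is correct and follows the same route as the paper, which simply records that the corollary is implied by Propositions \ref{Lie} and \ref{no}. You have merely made explicit the Radon--Hurwitz arithmetic guaranteeing the existence of a $CK_2$ (respectively $CK_4$) when $4\mid 2n$ (respectively $8\mid 2n$), and then invoked exactly those two propositions.
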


\begin{example}
\label{example} Here we consider two examples of the spaces $CK_3$, where the first
(respectively, second) one is (respectively, is not) a subalgebra of $so(2n)$.
In the Lie algebra $so(4)$, consider
the vectors
$$
U_1=
\begin{pmatrix}
0 & 0 & 0 & -1 \\
0 & 0 & -1 & 0 \\
0 & 1 & 0 & 0 \\
1 & 0 & 0 & 0 \\
\end{pmatrix},
\quad U_2=
\begin{pmatrix}
0 & 0 & -1 & 0 \\
0 & 0 & 0 & 1 \\
1 & 0 & 0 & 0 \\
0 & -1 & 0 & 0 \\
\end{pmatrix},
\quad U_3=
\begin{pmatrix}
0 & -1 & 0 & 0 \\
1 & 0 & 0 & 0 \\
0 & 0 & 0 & -1 \\
0 & 0 & 1 & 0 \\
\end{pmatrix}.
$$
It is easy to check that $U_1^2=U_2^2=U_3^2=-I$, $U_1U_2=-U_3$,
$U_2U_1=U_3$, $U_1U_3=U_2$, $U_3U_1=-U_2$, $U_2U_3=-U_1$,
$U_3U_2=U_1$. Therefore, the linear span of the vectors $U_i$,
$1\leq i\leq 3$, in $so(4)$ is a Lie subalgebra of the type $CK_3$.

Now, consider in $so(8)$ the vectors
$$
\widetilde{U}_1=\diag(U_1,-U_1), \quad
\widetilde{U}_2=\diag(U_2,-U_2), \quad
\widetilde{U}_3=\diag(U_3,-U_3).
$$
It is easy to check that
$\widetilde{U}_1^2=\widetilde{U}_2^2=\widetilde{U}_3^2=-I$,
$$
\widetilde{U}_1\widetilde{U}_2=\diag(-U_3,-U_3), \quad
\widetilde{U}_1\widetilde{U}_3=\diag(U_2,U_2), \quad
\widetilde{U}_2\widetilde{U}_3=\diag(-U_1,-U_1),
$$
$$
\widetilde{U}_2\widetilde{U}_1=\diag(U_3,U_3), \quad
\widetilde{U}_3\widetilde{U}_1=\diag(-U_2,-U_2), \quad
\widetilde{U}_3\widetilde{U}_2=\diag(U_1,U_1).
$$
Therefore, the linear span of the vectors $\widetilde{U}_i$, $1\leq
i\leq 3$, in $so(8)$ is a subspace of the type $CK_3$, but is not a
Lie subalgebra. It is easy to check that the Lie algebra
$CK_3+[CK_3,CK_3]$ is isomorphic to $so(3)\oplus so(3)$.
\end{example}

\newpage

\medskip
\medskip
\medskip

\end{document}